\documentclass{article}
\usepackage{xcolor}
\definecolor{lilac}{rgb}{0.7,0.4,0.9}
\usepackage[utf8]{inputenc}
\usepackage{amsmath}
\usepackage{enumitem}
\usepackage{amsfonts}
\usepackage{setspace}
\usepackage{amssymb}
\usepackage{cancel}  
\usepackage{graphicx}
\usepackage{mathrsfs}
\usepackage{esint}
\usepackage{upref,amsthm,amsxtra,exscale}
\usepackage{cite}
\usepackage[colorlinks=true,urlcolor=blue,
citecolor=red,linkcolor=blue,linktocpage,pdfpagelabels,
bookmarksnumbered,bookmarksopen]{hyperref}
\usepackage{cleveref}
\usepackage[cm]{fullpage}

\usepackage{subcaption}
\usepackage{caption}

\numberwithin{equation}{section}

\newtheorem{theorem}{Theorem}[section]
\newtheorem{proposition}[theorem]{Proposition}
\newtheorem{lemma}[theorem]{Lemma}
\newtheorem{corollary}[theorem]{Corollary}
\newtheorem{example}[theorem]{Example}

\newtheorem{remark}{Remark}
\newtheorem{definition}{Definition}

\title{Infinitely many solutions to  Kirchhoff-Boussinesq-type equations on Riemannian manifolds }

\author{Romulo D. Carlos\footnote{ R.D. Carlos was supported by a postdoctoral fellowship from UNAM-DGAPA.},
Juan Carlos Fernández\footnote{ J.C. Fernández was partially supported by UNAM-DGAPA-PAPIIT IA103925}
\ and María de los Ángeles Sandoval-Romero}
	
\date{}
\begin{document}
    \maketitle

\begin{abstract}
We study Kirchhoff-Boussinesq-type equations on closed Riemannian manifolds $(M,g)$ of the form
\[
\sum_{j=0}^m a_i(-\Delta_g)^m u \pm \text{div}_g(\vert\nabla u\vert_g^{p-2}\nabla u) = f(u),\qquad \text{ on }\ M,
\]
where $m<\dim M/2$, $(a_0,a_1,\ldots a_m)\in C^\infty_+(M)\times [0,\infty)^{m-1}\times(0,\infty)$, $2<p\leq\frac{2\dim M}{\dim M- 2}$ and $f:\mathbb{R}\rightarrow\mathbb{R}$ is a continuous nonlinearity of superlinear type with subcritical or critical Sobolev growth. We briefly motivate the above equation in the case of $m=2$, as an extension of the stationary Kirchhoff-Boussinesq equation when modeling the dynamics of curved elastic plates. Under some symmetry assumptions, we prove the existence of multiple equivariant solutions when considering critical Sobolev nonlinearities and, for subcritical ones, we also prove the existence of ground-state solutions. As a byproduct, we prove a Gagliardo-Nirenberg interpolation inequality and give several equivalent norms on the higher order Sobolev space $H_g^m(M)$.
\medskip

\noindent \textbf{Mathematics Subject Classification:} 
58J05, 
35J35, 
35J91, 
35J92, 
35B33. 

\medskip

\noindent \textbf{Keywords:}
Kirchhoff-Boussinesq equation, closed Riemannian manifolds, critical Sobolev nonlinearities, symmetries.

\end{abstract}

	\maketitle

	



    \section{Introduction}

The stationary Kirchhoff-Boussinesq equation on domains,  $\Omega\subset\mathbb{R}^d$ 
\begin{equation}\label{EuclidianK-B}
\Delta^2 u \pm \text{div}(\vert\nabla  u\vert^{p-2}\nabla u)  = f(u),\qquad\text{in }\Omega,
\end{equation}
and related problems, under different boundary conditions, have attracted considerable attention in recent years (see, for instance,\cite{carlos2024existence,tavares2025existence,carlos2024elliptic}).  This equation arises in the study of the dynamics of elastic plates accounting for transverse shear effects (see, for instance,\cite{ChueshovLasiecka2006,ChueshovLasiecka2006b,ChueshovLasiecka2011}). 

In this paper, we propose and study an extension of the Kirchhoff-Boussinesq equation to Riemannian manifolds. Concretely, if $(M,g)$ denotes a Riemannian manifold with dimension $d:=\dim M$ and if we consider real numbers $2<p<d$ and $r>p$, we study the existence of solutions to the problem
\begin{equation}\label{Problem:Kirchhof-Boussinesq}
\Delta_g^2 u \pm \Delta_{g,p} u + a_0 u = f_q(u) + \vert u\vert^{r-2}u \quad \text{on }\  M.
\end{equation}
where $a_0\in C^\infty(M)$, $f_q:\mathbb{R}\rightarrow\mathbb{R}$ is a nonlinear function satisfying suitable assumptions depending on $q\in\mathbb{R}$,  $\Delta_g$ is the Laplace-Beltrami operator on $(M,g)$ and $\Delta_{p,g}
:= \text{div}_g(\vert \nabla\cdot\vert^{p-2}\nabla\cdot)$ denotes the $p$-Laplacian operator on $(M,g)$.

The motivation to study equation \eqref{Problem:Kirchhof-Boussinesq} arises when modeling curved plates by means of a Riemannian manifold, where terms involving different types of curvature emerge naturally and are represented by the term $a_0$ in the equation. Indeed, when considering flat plates in the Mindlin-Timoshenko model, the Kirchhoff-Boussinesq equation appears when applying the divergence to the symmetrization of the derivative of a vector field in $\mathbb{R}^2$ (see \cite{ChueshovLasiecka2011} and \cite{LagneseBook,LagneseLionsBook}). This symmetrization is known as the \emph{deformation tensor} (see \cite{Czubak2024}) and, as remarked by Mazzucatto \cite[Section 5]{Mazzucatto2003}, the divergence of the deformation tensor coincides with the vectorial laplacian (Bochner Laplacian). However, in the case of Riemannian manifolds, the divergence of the stress tensor, known as the \emph{deformation laplacian} and introduced by Ebin and Marsden in \cite{EbinMarsden1970}, does not coincide with the Bochner laplacian (see \cite[Section 5]{Taylor1992}). Instead, if $\Delta_D$ denotes the deformation laplacian and $\Delta_B$ the Bochner laplacian, a Weitzenböch type formula (see, for example, \cite{Czubak2024,ChanCzubak2026}) yields for divergence free vector fields that
\[
-2\Delta_D=-\Delta_B-\text{Ric},
\]
where $\text{Ric}$ denotes the Ricci curvature. Observe that in the case of flat plates, the Ricci curvature is zero and the two operators coincide. Under the hypothesis of the transversal shear effects, these curvature terms should yield a smooth scalar function $a_0$ defined on $M$, as we proposed in equation \ref{Problem:Kirchhof-Boussinesq}.

To simplify our analysis, we will assume that the Riemannian manifolds are closed (compact and without boundary), with dimension $d>4$ and that the potential term \(a_{0}\) in equation \eqref{Problem:Kirchhof-Boussinesq} is strictly positive.  It is worth noting that potentials also arise in the Euclidean Kirchhoff-Boussinesq equation \eqref{EuclidianK-B} when considering $\Omega=\mathbb{R}^n$  (see, for instance, \cite{SunWu2017, carlos2024kirchhoff, carlos2025concentration, figueiredo2025result, carlos2024nonlinear}). 

Rather than restricting ourselves to the problem \eqref{Problem:Kirchhof-Boussinesq}, we will deal with a more general problem, which we now state. Let $m\geq 2$ be an integer, and let $(M,g)$ be a closed Riemannian manifold of dimension $d>2m$. Define the elliptic operator of order $2m$
\begin{equation}\label{Def:Operator}
P_{\textbf{a}}^m := \sum_{i=0}^m a_i(-\Delta_g)^i,\qquad a_1,\ldots a_{m-1}\in[0,\infty), a_m>0 \text{ and } a_0\in C^\infty(M,(0,\infty)), 
\end{equation}
where, as usual $(-\Delta_g)^0:= Id$. For $p\in(2,d)$, we consider the following  elliptic problem involving a combination of the $2m$-th order $P_{\textbf{a}}^m$ and a $p$-Laplacian,  
    \begin{equation}\label{Problem:Main}
	 P_{\textbf{a}}^mu \pm (\Delta_{g})_{p} u =  f_q(u)+ |u|^{r-2}u \qquad \text{ on } M,
\end{equation}
where  $2<p< r \leq 2_{m,d}^{\ast}$, $2_{m,d}^{\ast}:=\frac{2d}{d-2m}$ being the critical Sobolev exponent of order $m$, 
and $f_q:\mathbb{R}\rightarrow\mathbb{R}$ is a continuous nonlinearity  satisfying the following hypotheses 
\begin{enumerate}[label=($f_1$),ref=$(f_1)$]
		\item \label{f_1} \mbox{There exist constants $C>0$ and  $q\in(p,r)$ such that the function  $f$ satisfies $  |f_q(t)|\leq C(1+|t|^{q-1}).$} 
\end{enumerate}

\begin{enumerate}[label=($f_2$),ref=$(f_2)$]
		\item \label{f_2} \mbox{The limit holds $
 \lim_{|t| \rightarrow 0} \frac{f_q(t)}{|t|} =0
$ }  holds.
  \end{enumerate}

  \begin{enumerate}[label=($f_3$),ref=$(f_3)$]
		\item \label{f_3} \mbox{The function $t \rightarrow \frac{f_q(t)}{|t|^{p-2}t}$ is increasing in  $(0, +\infty)$. } 
  \end{enumerate}

 \begin{enumerate}[label=($f_4$),ref=$(f_4)$]
		\item \label{f_4} $f_q(-t)=-f_q(t)$, for all $t \in \mathbb{R}$.
		
  \end{enumerate}
Observe that we recover the Kirchhoff-Boussinesq equation \eqref{Problem:Kirchhof-Boussinesq} by taking $m=2$ and $a_1=0$. Also notice that we are allowing critical Sobolev nonlinearities in \eqref{Problem:Main} when $r=2_{m,d}^*$, which lack the compactness needed in order to apply standard variational methods (see, for instance, \cite{StruweBook,WillemBook}).  To deal with the critical exponent nonlinearity, we introduce symmetries as in \cite{FernandezPalmasTorres2024}. 

 In order to state our main results, we briefly review the terminology regarding group actions on manifolds. If $\text{Isom}(M,g)$ denotes the group of isometries of $(M,g)$, there is an action by isometries on $M$  defined by $(\gamma, x)\mapsto \gamma x:=\gamma(x), x\in M,\gamma\in \text{Isom}(M,g)$. Given a closed subgroup $\Gamma\subset \text{Isom}(M,g)$, we study the existence and multiplicity of solutions to problem \eqref{Problem:Main} that are \emph{$\Gamma$-invariant,} that is, satisfying  $u(\gamma x)=u(x)$ for every $x\in M$ and every $\gamma\in \Gamma$.  Given $x\in M$, define the $\Gamma$-orbit of $x$ as the set
\[
\Gamma x:= \{ \gamma x\in M \;:\; \gamma\in\Gamma \}.
\]
Since $\Gamma$ is closed, the $\Gamma$-orbits are submanifolds of $M$ (see, for instance, \cite{AlexBettiol}). To ensure that some Sobolev spaces with symmetries are infinite-dimensional (see Section \ref{Section:SymmetricSetting} below), we will impose the following condition on $\Gamma$:
\begin{equation}\label{ActionNotTransitive}
\dim \Gamma x < d.
\end{equation}
It is worth noting that the non symmetric scenario is obtained when $\Gamma=\{Id:M\to M\}$.

Recall that a nontrivial solution having the least energy associated with equation \eqref{Problem:Main} is called a ground state solution (see Section \ref{Section:VariationalSetting} below for a precise definition).
We are ready to state our main result, which extends  the main results in\cite{CarlosFurtado, carlos2024existence, tavares2025existence, santaria2026class,figueiredo2026result} to higher order operators and on manifolds, allowing critical Sobolev exponent nonlinearities.

\begin{theorem}\label{Theorem:Main}
Let $m\geq 2$ be an integer, $(M,g)$ be a closed Riemannian manifold of dimension $d>2m$, $\Gamma$ be a closed subgroup  of $\emph{Isom}(M,g)$ satisfying \eqref{ActionNotTransitive}, and $p,q,r\in\mathbb{R}$ be such that
\begin{equation}\label{Hypothesis:pq,r}
2<p<d\qquad\text{and}\qquad p<q<r\leq 2_{m,d}^*.
\end{equation}
Assume that $f_q:\mathbb{R}\rightarrow\mathbb{R}$ is continuous and satisfies \ref{f_1} to \ref{f_4} and that the coefficients of the operator $P_{\textbf{a}}^m$ satisfy that 
\begin{equation}\label{Hypothesis:Coefficients}
a_m>0, \quad a_1,\ldots,a_{m-1}\in[0,\infty) \quad\text{ and } \quad a_0\in C^\infty(M) \text{ is positive and }\Gamma\text{-invariant}.
\end{equation} 
If either
\begin{equation}\label{Equation:p_subcritical}
m=2 \text{ and } 2<p< 2_{1,d}^\ast\qquad\text{or}\qquad  m\geq 3 \text{ and } 2<p\leq 2_{1,d}^\ast 
\end{equation}
hold and if either
\begin{enumerate}[label=(\roman*)]
\item $r<2_{m,d}^*$ 

\hspace{-1cm} or

\item $r=2_{m,d}^*$ and
\begin{equation}\label{Hyp:Gamma}
1\leq \dim \Gamma x\leq m-1,\quad\text{for every }x\in M,
\end{equation}
\end{enumerate}
then, the problem 
\eqref{Problem:Main} admits an infinite number of $\Gamma$-invariant weak solutions with increasing energies, and one of them has the least energy among all other $\Gamma$-invariant solutions. 

In particular, when $r<2_{m,d}^*$ and $\Gamma=\{Id:M\to M\}$, there exists a ground state solution to \eqref{Problem:Main}.
 \end{theorem}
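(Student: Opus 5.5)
We work variationally in the space $H:=H^m_g(M)^\Gamma$ of $\Gamma$-invariant functions in $H^m_g(M)$. We equip it with the norm
\[
\|u\|^2:=\int_M \Big(a_0u^2+\sum_{i=1}^m a_i\,|(-\Delta_g)^{i/2}u|^2\Big)\,dv_g ,
\]
where odd powers are interpreted via $\nabla(-\Delta_g)^{(i-1)/2}$. The equivalence of this norm with the standard $H^m_g$ norm uses $a_0>0$, $a_m>0$ and the equivalent norms announced in the abstract. The energy functional is
\[
J(u)=\tfrac12\|u\|^2\pm\tfrac1p\int_M|\nabla u|_g^p\,dv_g-\int_M F_q(u)\,dv_g-\tfrac1r\int_M|u|^r\,dv_g .
\]
The gradient term is finite and $C^1$ on $H^m_g$ thanks to \eqref{Equation:p_subcritical}: since $\nabla u\in H^{m-1}_g\hookrightarrow L^{2^*_{m-1,d}}$ and $2^*_{m-1,d}\ge 2^*_{1,d}\ge p$ (strictly when $m=2$), the Gagliardo--Nirenberg inequality gives
\[
\|\nabla u\|_p^p\le C\|u\|^{p\theta}\|u\|_2^{p(1-\theta)}.
\]
This interpolation bound is what controls the sign-indefinite term $\pm$.

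\emph{Compactness.} By Palais' principle of symmetric criticality, critical points of $J|_H$ are weak solutions of \eqref{Problem:Main}. When $r<2^*_{m,d}$, the embeddings $H^m_g\hookrightarrow L^r$ and $H^m_g\hookrightarrow W^{1,p}$ are compact; the latter holds because either $p<2^*_{1,d}$, or $m\ge3$ and then $H^{m-1}\hookrightarrow L^{p}$ is compact even at $p=2^*_{1,d}$. When $r=2^*_{m,d}$, I would invoke the symmetric setting: under \eqref{Hyp:Gamma}, with orbit dimension $k\ge1$ everywhere, the Hebey--Vaugon type embedding $H^m_g(M)^\Gamma\hookrightarrow L^s$ is compact for $s<2^*_{m,d-k}$, and $2^*_{m,d-k}>2^*_{m,d}$. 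The condition $k\le m-1$ keeps $d-k>2m$, so this exponent is finite. Hence in both cases $u\mapsto\int(F_q(u)+|u|^r/r)$ and $u\mapsto\|\nabla u\|_p^p$ are weakly-to-strongly continuous on $H$.

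\emph{Palais--Smale condition.} For a (PS) sequence, \ref{f_3} gives $f_q(t)t\ge pF_q(t)\ge0$, and $r>p$. Computing $J(u_n)-\frac1pJ'(u_n)u_n$, the $p$-gradient term cancels exactly, leaving
\[
J(u_n)-\tfrac1pJ'(u_n)u_n=\Big(\tfrac12-\tfrac1p\Big)\|u_n\|^2+\int_M\Big(\tfrac1pf_q(u_n)u_n-F_q(u_n)\Big)+\Big(\tfrac1p-\tfrac1r\Big)\|u_n\|_r^r .
\]
Since $p>2$, the first coefficient $\frac12-\frac1p$ is positive, so this quantity is at least $(\frac12-\frac1p)\|u_n\|^2$. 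Combined with the (PS) bounds $J(u_n)\le c$ and $|J'(u_n)u_n|\le o(1)\|u_n\|$, this yields boundedness of $(u_n)$. Extract a weakly convergent subsequence $u_n\rightharpoonup u$. Testing $J'(u_n)-J'(u)$ against $u_n-u$ and using the compactness above, we get $\|u_n-u\|^2\to0$. Here the $p$-Laplacian difference term either has a sign (the $+$ case, by monotonicity) or tends to zero by strong $W^{1,p}$ convergence.

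\emph{Infinitely many solutions and least energy.} $J$ is even by \ref{f_4}, and $J(0)=0$. Near $0$, by \ref{f_2} and \ref{f_1}, and since all other terms are of order greater than $2$ (with the $-$ sign handled by $\|\nabla u\|_p\le C\|u\|$),
\[
J(u)\ge \tfrac14\|u\|^2-C\|u\|^p-C\|u\|^q-C\|u\|^r ,
\]
so $J\ge\alpha>0$ on a small sphere. On every finite-dimensional subspace $E\subset H$, which exists in every dimension by \eqref{ActionNotTransitive}, norms are equivalent and the $-\frac1r\|u\|_r^r$ term dominates since $r>p>2$, so $J\to-\infty$. The symmetric mountain pass theorem of Ambrosetti--Rabinowitz then gives critical values $c_k\to\infty$. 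For the least energy solution, set $\mathcal K=\{u\in H\setminus\{0\}:J'(u)=0\}$ and take a minimizing sequence for $\inf_{\mathcal K}J$. Every $u\in\mathcal K$ satisfies
\[
J(u)=J(u)-\tfrac1pJ'(u)u\ge(\tfrac12-\tfrac1p)\|u\|^2>0 ,
\]
and $J'(u)u=0$ together with the small-$\|u\|$ expansion forces $\|u\|\ge\rho>0$. The minimizing sequence is therefore a (PS) sequence, bounded and bounded away from $0$, and its strong limit is a nontrivial least-energy $\Gamma$-invariant solution. With $\Gamma$ trivial this is a ground state.

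\emph{Main obstacle.} I expect the main obstacle to be the compact symmetric embedding in the critical case, together with the fact that the $p$-Laplacian term with either sign destroys the usual homogeneity. I would first verify the compactness $H^m(M)^\Gamma\hookrightarrow W^{1,p}$ at the borderline $p=2^*_{1,d}$ for $m\ge3$.
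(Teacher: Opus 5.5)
Your proposal is correct, but it takes a different route from the paper.

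\textbf{The paper's route.} The paper uses the Nehari-manifold method of Szulkin--Weth. Via $(f_3)$--$(f_4)$ it shows that every ray $t\mapsto tu$ meets $\mathcal{N}^\Gamma$ exactly once and that $u\mapsto t_u$ is continuous. Hence $\mathcal{N}^\Gamma$ is oddly homeomorphic to the sphere $S_1^\Gamma(0)$. It then proves the Palais--Smale condition only for sequences lying on $\mathcal{N}^\Gamma$. The least-energy solution comes from Ekeland's principle applied to $\Psi=I\circ H$ on the sphere. The infinitely many solutions come from the genus-type result (Theorem 2 of Szulkin--Weth) for the even functional $\Psi$.

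\textbf{Your route.} You work with $J$ on all of $H^m_g(M)^\Gamma$.
\begin{itemize}
\item In $J(u)-\frac1pJ'(u)u$ the $p$-gradient term cancels exactly. This bounds \emph{every} Palais--Smale sequence, not just those on $\mathcal{N}^\Gamma$.
\item The compact embedding $H^m_g\hookrightarrow W^{1,p}$ is equivalent in effect to the paper's Gagliardo--Nirenberg argument with $\Upsilon<1$ plus strong $L^2$ convergence. Together with the Hebey--Vaugon embedding it gives (PS) at every level.
\item The Ambrosetti--Rabinowitz symmetric mountain pass theorem then gives critical values $c_k\to\infty$.
\item Minimizing over the critical set $\mathcal{K}$ gives a least-energy $\Gamma$-invariant solution.
\end{itemize}

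\textbf{What each approach buys.} Your approach is more classical and needs less. Uniqueness of the Nehari projection is never used, so $(f_3)$ enters only through $tf_q(t)-pF_q(t)\ge 0$, an Ambrosetti--Rabinowitz-type inequality that could replace it.

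The paper's approach yields for free the identity $\min_{\mathcal{K}}I=c_0^\Gamma=\inf_{\mathcal{N}^\Gamma}I$. The paper \emph{defines} a ground state by $I(u)=c_0=\inf_{\mathcal N}I$. Your $\inf_{\mathcal{K}}J$ is a priori only $\ge \inf_{\mathcal{N}}J$, so it still has to be identified with $c_0$. One way:
\begin{itemize}
\item Show that the mountain-pass level equals $\inf_{\mathcal N}J$. Near $0$ one has $J'(u)u>0$. If $J(e)<0$, then $t_e<1$ and so $J'(e)e<0$. Hence every admissible path crosses $\mathcal N$.
\item Since the mountain-pass level is a critical value, you get $\inf_{\mathcal K}J\le\inf_{\mathcal N}J$, hence equality.
\end{itemize}
This step does use the paper's fibering lemma, and therefore the full strength of $(f_3)$--$(f_4)$.

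\textbf{Two harmless slips.}
\begin{itemize}
\item $(f_3)$ does not give $F_q\ge 0$; for example, $f_q(t)=|t|^{q-2}t-|t|^{p-2}t$ satisfies $(f_1)$--$(f_4)$ but $F_q<0$ near $0$. You only use $tf_q(t)-pF_q(t)\ge0$, which does hold.
\item The condition $k\le m-1$ does not force $d-k>2m$. However, when $d-k\le 2m$ the symmetric embedding is compact into every $L^s$, which is even better for your argument.
\end{itemize}
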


\medskip

As an immediate consequence, we have a multiplicity result for the Kirchhoff-Boussinesq equation on a closed Riemannian manifold.

\begin{corollary}
Let $(M,g)$ be a closed Riemannian manifold of dimension $d\geq 5$, $\Gamma$ a closed subgroup of $\emph{Isom}(M,g)$ satisfying \eqref{ActionNotTransitive} and $p,q,r\in\mathbb{R}$ satisfying \eqref{Hypothesis:pq,r}.
Assume that $f_q:\mathbb{R}\rightarrow\mathbb{R}$ is continuous and satisfies  \ref{f_1} to \ref{f_4}, and that $a_0\in C^\infty(M)$ is positive and $\Gamma$-invariant. 
If $p$ satisfies \eqref{Equation:p_subcritical} and if either
\begin{enumerate}[label=(\roman*)]
\item $r<2_{m,d}^*$ 

\hspace{-1cm} or

\item $r=2_{m,d}^*$ and $\Gamma$ satisfies \eqref{Hyp:Gamma},
\end{enumerate}
 then the nonlinear Kirchhoff-Boussinesq equation \eqref{Problem:Kirchhof-Boussinesq} admits an infinite number of $\Gamma$-invariant solutions with increasing energies, and one of them has least energy among any other $\Gamma$-invariant solutions. 

In particular, when $r<2_{m,d}^*$ and $\Gamma=\{Id\}$, there exists a ground state solution to \eqref{Problem:Kirchhof-Boussinesq}.
\end{corollary}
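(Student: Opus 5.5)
The corollary is a direct specialization of Theorem \ref{Theorem:Main}, so the plan is to check that the Kirchhoff-Boussinesq problem \eqref{Problem:Kirchhof-Boussinesq} is literally an instance of \eqref{Problem:Main} and that every hypothesis of the theorem holds. I will take $m=2$, $a_1=0$ and $a_2=1$ in \eqref{Def:Operator}, so that
\[
P_{\textbf{a}}^2 u = a_0u + (-\Delta_g)^2u = \Delta_g^2 u + a_0 u .
\]
Hence \eqref{Problem:Main} with $m=2$ becomes $\Delta_g^2u+a_0u\pm\Delta_{g,p}u=f_q(u)+|u|^{r-2}u$, which is \eqref{Problem:Kirchhof-Boussinesq} after reordering terms. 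The weak formulations and energy functionals of the two problems also coincide, so "energy" and "ground state" carry the same meaning in both statements.

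Next I verify the hypotheses. The dimension condition $d>2m=4$ is exactly $d\geq5$. In \eqref{Hypothesis:Coefficients}, $a_2=1>0$ and $a_1=0\in[0,\infty)$; the list $a_1,\ldots,a_{m-1}$ reduces to $a_1$ alone, and $a_0$ is positive, smooth and $\Gamma$-invariant by assumption. The group $\Gamma$ is closed and satisfies \eqref{ActionNotTransitive}. The exponents satisfy \eqref{Hypothesis:pq,r}, with $2_{m,d}^*$ read as $2_{2,d}^*=\frac{2d}{d-4}$. The nonlinearity satisfies \ref{f_1}--\ref{f_4}.

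The condition \eqref{Equation:p_subcritical} with $m=2$ reads $2<p<2_{1,d}^*$. One point to note is that \eqref{Hypothesis:pq,r} also requires $p<d$. Since $2_{1,d}^*=\frac{2d}{d-2}\le d$ exactly when $d\ge4$, the constraint $p<2_{1,d}^*$ already implies $p<d$ when $d\ge5$, so the two conditions are compatible. In case (ii), \eqref{Hyp:Gamma} with $m=2$ forces $\dim\Gamma x=1$ for every $x\in M$. That is precisely what is assumed, so nothing further needs checking there.

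Once these checks are done, Theorem \ref{Theorem:Main} applies in each of the cases (i) and (ii). It gives infinitely many $\Gamma$-invariant weak solutions of \eqref{Problem:Kirchhof-Boussinesq} with increasing energies, one of which has least energy among the $\Gamma$-invariant solutions. Taking $\Gamma=\{Id\}$, this group trivially satisfies \eqref{ActionNotTransitive} because every orbit is a single point of dimension $0<d$; when $r<2_{2,d}^*$ this yields the ground state. There is no real obstacle in this argument; the only care needed is the bookkeeping of the $m$-dependent quantities at $m=2$ and confirming that $p<d$ is compatible with $p<2_{1,d}^*$.
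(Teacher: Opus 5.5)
Your proposal is correct and follows the same route as the paper, which obtains the corollary as an immediate specialization of Theorem~\ref{Theorem:Main} with $m=2$, $a_1=0$ (and $a_2=1$), so that $P_{\textbf{a}}^2=\Delta_g^2+a_0$. Your verification of the hypotheses is accurate and more explicit than the paper's one-line remark: $d>4$, $2_{2,d}^*=\frac{2d}{d-4}$, condition \eqref{Equation:p_subcritical} reducing to $2<p<2_{1,d}^*$, and \eqref{Hyp:Gamma} forcing $\dim\Gamma x=1$.
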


 We remark that our results
remain valid for a wide class of superlinear nonlinearities with
subcritical growth. The following examples illustrate the admissible
class of nonlinearities satisfying assumptions \ref{f_1}-\ref{f_4}.
\begin{example}
Let \(p>2\). The following nonlinearities satisfy assumptions
\ref{f_1}-\ref{f_4}:
\begin{enumerate}
    \item $f_q(t)= |t|^{q-2}t,$ with $p<q<2^*_{m,d}$.

     \item $f_q(t)= |t|^{q-2}t \,\ln (1+|t|^{\kappa})$ with $p<q$, $0<\kappa$ and $q+\kappa<2^*_{m,d}$.

    \item $f_{q_1,q_2}(t)= |t|^{q_1-2}t + |t|^{q_2-2}t,$ with $p<q_1<q_2<2^*_{m,d}$.

 \item $f_q(t)= \dfrac{|t|^{q-2}t}{1+|t|^{\alpha}},$ 
    with $0<\alpha<q-p$ and $p<q<2^*_{m,d}$.

    \item $f_q(t)= |t|^{q-2}t \,(1-e^{-|t|}),$ with $p<q<2^*_{m,d} $.
\end{enumerate}
\end{example}

\medskip

When $(M,g)$ is an Einstein manifold with positive scalar curvature, as it was shown by Gover in \cite{Gover06}, the higher order conformal operators introduced by Graham, Jenne, Mason and Sparling (GJMS for short) \cite{GJMS} have the form \eqref{Def:Operator} with $a_0>0$ constant. As the second order conformal operator in $\mathbb{R}^d$ is just the bilaplacian $(-\Delta)^2$, equation \eqref{Problem:Main} is a generalization of Kirchhoff-Boussinesq equation to these kinds of operators. In light of these considerations, the main theorem yields the following consequence.

\begin{corollary} \label{Corollary:Kirschoff-BoussinesqJGMS}
Let $(M,g)$ be a closed Einstein manifold of dimension $d\geq 5$ and positive scalar curvature,  and consider the $GJMS$-operator $\mathscr{P}_g^m$ of order $2\leq m<d/2$ related to $(M,g)$. Consider $p,q,r\in\mathbb{R}$ satisfying \eqref{Hypothesis:pq,r} and \eqref{Equation:p_subcritical}, and assume that $f_q:\mathbb{R}\to\mathbb{R}$ is continuous and satisfies  \ref{f_1} to \ref{f_4}. Then, for any closed subgroup $\Gamma$ of $\text{Isom}(M,g)$  satisfying \eqref{ActionNotTransitive} and if $r>q$ satisfies either  
\begin{enumerate}[label=(\roman*)]
\item $r<2_{m,d}^*$

\hspace{-1cm} or

\item $r=2_{m,d}^*$ and $\Gamma$ satisfies \eqref{Hyp:Gamma},
\end{enumerate}
then the following problem 
\begin{equation}\label{Problem:EinsteinManifolds}
	 \mathscr{P}_{g}^mu \pm (\Delta_{g})_{p} u =  f_q(u)+ |u|^{r-2}u \qquad \text{ in } \  M,
\end{equation}
admits an infinite number of $\Gamma$-invariant solutions with increasing energies, and one of them has least energy among any other $\Gamma$-invariant solutions.

In particular, when $r<2_{m,d}^*$ and $\Gamma=\{Id\}$, there exists a ground state solution to \eqref{Problem:Kirchhof-Boussinesq}.
\end{corollary}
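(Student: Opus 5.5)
The corollary will follow directly from Theorem \ref{Theorem:Main}; the only real work is to check that the GJMS operator fits the framework \eqref{Def:Operator} with coefficients satisfying \eqref{Hypothesis:Coefficients}.

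The first step is to invoke Gover's factorization \cite{Gover06}. On a closed Einstein manifold with $\text{Ric}_g = \lambda(d-1) g$ and scalar curvature $R_g = \lambda d(d-1) > 0$, the GJMS operator factors as
\[
\mathscr{P}_g^m = \prod_{k=1}^{m} \bigl(-\Delta_g + c_k\bigr), \qquad c_k = \frac{R_g}{d(d-1)}\Bigl(\frac{d}{2}+k-1\Bigr)\Bigl(\frac{d}{2}-k\Bigr).
\]
Since $R_g$ is constant and positive and $1\leq k\leq m<d/2$, each $c_k$ is a positive constant.

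Next, I expand the product. It is a polynomial in $-\Delta_g$, which I write as $\sum_{i=0}^m a_i(-\Delta_g)^i$. The coefficient $a_i$ is the elementary symmetric polynomial $e_{m-i}(c_1,\dots,c_m)$. In particular:
\begin{itemize}
\item $a_m=1>0$;
\item $a_1,\dots,a_{m-1}$ are sums of products of positive numbers, so they are positive;
\item $a_0=\prod_k c_k$ is a positive constant.
\end{itemize}
A positive constant is smooth, and it is $\Gamma$-invariant for every closed subgroup $\Gamma\subset \text{Isom}(M,g)$. Hence \eqref{Hypothesis:Coefficients} holds with no restriction on $\Gamma$, and $\mathscr{P}_g^m=P^m_{\textbf{a}}$.

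Finally, the remaining hypotheses of the corollary, namely \eqref{Hypothesis:pq,r}, \eqref{Equation:p_subcritical}, \ref{f_1}--\ref{f_4}, \eqref{ActionNotTransitive}, and (i) or (ii), are exactly those of Theorem \ref{Theorem:Main}. We also have $d\geq 5$ and $2\leq m<d/2$, so $d>2m$. Applying Theorem \ref{Theorem:Main} then gives infinitely many $\Gamma$-invariant solutions of \eqref{Problem:EinsteinManifolds} with increasing energies, one of which has least energy among $\Gamma$-invariant solutions. Taking $\Gamma=\{Id\}$ when $r<2^*_{m,d}$ gives a ground state.

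The only point needing care is the exact form of Gover's formula, in particular the normalization of the constants $c_k$. What matters is their positivity, which requires $k<d/2$ and $R_g>0$; this is guaranteed by the assumptions $m<d/2$ and positive scalar curvature.
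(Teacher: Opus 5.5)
Your proposal is correct and follows the paper's own proof. Both invoke Gover's factorization $\mathscr{P}_g^m=\prod_{\ell=1}^m(-\Delta_g+c_\ell\,\mathrm{Sc}_g)$ with $c_\ell=(d+2\ell-2)(d-2\ell)/(4d(d-1))>0$, which agrees with your normalization; then they note that the expanded coefficients are positive constants, so \eqref{Hypothesis:Coefficients} holds for every $\Gamma$, and apply Theorem \ref{Theorem:Main}. Your expansion via elementary symmetric polynomials just spells out the step the paper states in one line.
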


It is interesting to compare this result with the main result in the recent article \cite{OunaneTahri2026}, where the authors considered $r\in(1,2)$ for these kind of equations, in the absence of the $p$-laplacian term. However, even if the operators $\mathscr{P}_g^m$ are important in Conformal Geometry, we do not know whether equation \eqref{Problem:EinsteinManifolds} has geometric significance or not when considering the critical Sobolev exponent nonlinearity $r=2_{m,d}^*$. It may be interesting to know if there is a conformal operator related to the Kirchhoff-Boussinesq operator in the critical case. 

\bigskip 

In order to prove Theorem \ref{Theorem:Main} and its corollaries, we will follow the method of the Nehari set given in \cite{SzulkinWeth2010}. An important part of the proof is to handle the lower order terms of the operator $P_{\textbf{a}}^m$ and the $p$-Laplacian. This control is obtained by applying a Gagliardo-Nirenberg interpolation inequality on manifolds to the $j$-th covariant derivative $\nabla^ju$ of $u\in C^\infty(M)$, which we next state. 

\begin{theorem}\label{Theorem:Gagliardo-Nirenberg}
Let $(M,g)$ be a closed Riemannian manifold of dimension $d$. Let   $j, m$ be non-negative integers satisfying $0< j<m$ and $2m<d$,  $1\leq \kappa_1, \kappa_2, \kappa_3< \infty$ be real numbers and let $\Upsilon \in [0,1]$  such that
\begin{align*}
    \frac{1}{\kappa_1}=\frac{j}{d}+ \left(\frac{1}{\kappa_3}-\frac{m}{d}\right)\Upsilon + \frac{(1-\Upsilon)}{\kappa_2} \quad \mbox{and} \quad \frac{j}{m}\leq \Upsilon \leq 1.
\end{align*}
Then, there exist a constant $C=C(d,m,j,\kappa_1,\kappa_2,\kappa_3,\Upsilon,M)> 0$ such that
\begin{align*}
  \Big( \int_{M}  |\nabla^{j}_{g} u |^{\kappa_1} \,dV_g    \Big)^{\frac{1}{\kappa_1}}  \leq C \Big[   \Big(\int_{M}  |\nabla^{m}_{g} u |^{\kappa_3} \, dV_g     \Big)^{\frac{1}{\kappa_3}}+  \Big( \int_{M}  |u |^{\kappa_3}  \, dV_g  \Big)^{\frac{1}{\kappa_3}}    \Big]^{\Upsilon} \Big( \int_{M}  |u |^{\kappa_2} \, dV_g  \Big)^{\frac{1-\Upsilon }{\kappa_2}},
\end{align*}
for any $u\in H_g^m(M).$
\end{theorem}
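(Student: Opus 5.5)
The plan is to reduce to the Euclidean Gagliardo--Nirenberg inequality via a finite atlas and a partition of unity, and then to remove the extra lower-order terms produced by localization by absorbing them into $\|\nabla^m u\|_{\kappa_3}+\|u\|_{\kappa_3}$. By density of $C^\infty(M)$ in $H_g^m(M)$ and continuity of both sides (for $\kappa_3\le 2$ directly; in general we argue for smooth $u$ and pass to limits, using that the inequality is stated for $u\in H^m_g$ where the right side is finite), it suffices to take $u\in C^\infty(M)$. Since $M$ is compact, we cover it by finitely many normal coordinate charts $(U_k,\varphi_k)$ with $\overline{U_k}$ inside a larger chart. On each such chart the metric coefficients $g_{ij}$, $g^{ij}$ and all Christoffel symbols with their derivatives are bounded. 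We choose a smooth partition of unity $\{\eta_k\}$ subordinate to $\{U_k\}$.

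On each chart we compare covariant and coordinate derivatives. One has $|\nabla_g^j u|_g\le C\sum_{i=1}^{j}|D^i u|$ and $|D^m u|\le C\sum_{i=1}^{m}|\nabla_g^i u|_g$, with constants depending only on the chart, and $dV_g$ is comparable to Lebesgue measure. Write $u=\sum_k \eta_k u$ and apply the classical Euclidean Gagliardo--Nirenberg inequality (Nirenberg's version with $j/m\le\Upsilon\le1$, valid for compactly supported functions on $\mathbb{R}^d$, and $2m<d$ excludes the exceptional integer case) to $v_k:=(\eta_k u)\circ\varphi_k^{-1}$ at each order $i\le j$. Each application gives
\[
\|D^i v_k\|_{\kappa_1^{(i)}}\le C\|D^m v_k\|_{\kappa_3}^{\Upsilon}\|v_k\|_{\kappa_2}^{1-\Upsilon}.
\]
Since $M$ has finite volume, Hölder's inequality lets us pass from $L^{\kappa_1^{(i)}}$ to $L^{\kappa_1}$ when $i<j$. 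Here we use the exponent relation with $\Upsilon$ replaced by $\Upsilon_i$, chosen so that $1/\kappa_1^{(i)}\le 1/\kappa_1$ is admissible. A simpler route is to treat only the top term $D^j$ by Gagliardo--Nirenberg, and control lower derivatives through the same inequality applied at order $j$ together with interpolation. Next, the Leibniz rule gives $\|D^m v_k\|_{\kappa_3}\le C\sum_{i\le m}\|D^i u\|_{L^{\kappa_3}(U_k)}$, and $\|v_k\|_{\kappa_2}\le C\|u\|_{L^{\kappa_2}(M)}$.

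The key intermediate claim is that the intermediate derivatives are controlled by the extremes:
\[
\sum_{i=0}^m\|\nabla_g^i u\|_{\kappa_3}\le C\left(\|\nabla_g^m u\|_{\kappa_3}+\|u\|_{\kappa_3}\right).
\]
This is the same equivalence-of-norms statement the paper needs for $H^m_g$. I would prove it by the standard Euclidean interpolation $\|D^iv\|_{\kappa_3}\le \varepsilon\|D^mv\|_{\kappa_3}+C_\varepsilon\|v\|_{\kappa_3}$ for compactly supported $v$, applied to $\eta_k u$. The error terms from the commutators with $\eta_k$ and from the Christoffel symbols involve only derivatives of order $<m$, so for small $\varepsilon$ they are absorbed into the left-hand side, and summing over $k$ closes the estimate. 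Combining the chart estimates, summing over the finitely many charts, and using the triangle inequality in $L^{\kappa_1}$ yields the stated inequality with $[\|\nabla^m u\|_{\kappa_3}+\|u\|_{\kappa_3}]^{\Upsilon}\|u\|_{\kappa_2}^{1-\Upsilon}$.

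The main obstacle is the lower-order terms of orders $0<i<j$ produced by the coordinate expression of $\nabla^j_g$. These must again be bounded by a product of the same form, with the same powers $\Upsilon$ and $1-\Upsilon$, so that the final inequality is homogeneous as stated. I would first handle them by applying the Euclidean inequality at order $i$ with exponent $\Upsilon_i:=\Upsilon$, choosing the target exponent $\kappa^{(i)}$ defined by the scaling relation with $j$ replaced by $i$. Then $1/\kappa^{(i)}<1/\kappa_1$, and Hölder on the compact manifold dominates the $L^{\kappa_1}$ norm, provided $\kappa^{(i)}\ge1$ and $i/m\le\Upsilon$. The latter holds since $i<j$. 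If $\kappa^{(i)}$ becomes invalid (negative or infinite), I would instead bound $\|D^iu\|$ via Sobolev embedding of $W^{m,\kappa_3}$ combined with $\|u\|_{\kappa_2}$, possibly at the cost of an extra additive term that must then be checked against the homogeneity.
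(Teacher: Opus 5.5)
Your localization route differs from the paper's, and most of it works. The top-order term $\|D^jv_k\|_{\kappa_1}$ is handled correctly by the Euclidean inequality. Your absorption argument for $\sum_{i\le m}\|\nabla^i u\|_{\kappa_3}\le C(\|\nabla^m u\|_{\kappa_3}+\|u\|_{\kappa_3})$ is also fine, and that is where the $\|u\|_{\kappa_3}$ term in the statement enters your argument.

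The gap is the step you flag yourself: the lower-order coordinate derivatives $D^iv_k$, $1\le i<j$, coming from the Christoffel terms in $\nabla^j_g(\eta_k u)$. Your main device is Gagliardo--Nirenberg at order $i$ with the same $\Upsilon$. It gives $1/\kappa^{(i)}=1/\kappa_1-(j-i)/d$, which is $\le 0$ whenever $\kappa_1\ge d/(j-i)$. This happens inside the hypotheses. For $d=7$, $m=3$, $j=2$, $\Upsilon=2/3$, $\kappa_2=\kappa_3=10$ one gets $\kappa_1=10$ and $1/\kappa^{(1)}=1/10-1/7<0$.

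Your fallback does not close this as written. Set $A:=\|\nabla^m u\|_{\kappa_3}+\|u\|_{\kappa_3}$ and $B:=\|u\|_{\kappa_2}$. An extra additive term destroys the product form $A^{\Upsilon}B^{1-\Upsilon}$. Switching to another interpolation parameter $\theta$ needs a comparison between $A$ and $B$ that you have not supplied. Since $A\lesssim B$ is false in general, only $\theta<\Upsilon$ combined with $B\lesssim A$ could work.

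The missing idea is elementary. Each $v_k$ is supported in a fixed compact set $K_k\subset\mathbb{R}^d$. The Poincaré inequality for compactly supported functions, $\|w\|_{L^p}\le \mathrm{diam}(K_k)\,\|\partial_1 w\|_{L^p}$, can be iterated. It gives $\|D^iv_k\|_{\kappa_1}\le C_k\|D^jv_k\|_{\kappa_1}$ for $1\le i\le j$, so every lower-order term reduces to the top-order one with no exponent bookkeeping.

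Alternatively, check that $1/\kappa_1\le (j-i)/d$ forces $1/\kappa_2>1/\kappa_3-m/d$. Then Sobolev embedding on $M$ gives $B\lesssim A$. A smaller $\theta\in[i/m,\Upsilon)$, for which the order-$i$ exponent satisfies $0<1/\kappa^{(i)}_\theta\le 1/\kappa_1$, then works.

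A minor correction: Nirenberg's exceptional case is excluded not by $2m<d$ but by $\kappa_1<\infty$. Indeed, $\Upsilon=1$ together with $m-j-d/\kappa_3\in\{0,1,2,\dots\}$ forces $1/\kappa_1\le 0$.

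With these repairs your proof is complete and self-contained, relying only on the Euclidean inequality. The paper instead applies Aubin's zero-mean inequality on $M$ (Theorem~\ref{Theorem:Gagliardo-NirenbergZeroMean}) to $u-\bar u$. It uses $\nabla^i u=\nabla^i(u-\bar u)$ for $i\ge 1$ and bounds $\|u-\bar u\|_{\kappa_2}$ by a multiple of $\|u\|_{\kappa_2}$ via Jensen. This even yields the stronger bound without the $\|u\|_{\kappa_3}$ term, with all localization hidden inside Aubin's theorem.
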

Here, as usual, $H_g^m(M)$ denotes the Sobolev space of order $m$ on $(M,g)$ (see Section \ref{Section:VariationalSetting} below). We want to remark that, even if this inequality is well known in the case of the Euclidean space (see \cite{Nirenber1959}), to the best of our knowledge, it was not known for Riemannian manifolds in the way tha we state it. For instance, this inequality was recently proved in \cite{DellaCorteDianaMantegazza2024} for manifolds which are graph hypersurfaces in $\mathbb{R}^d$, and by Aubin in the classical text \cite[Theorem 3.70]{AubinBook}, for functions with zero mean. In Appendix \ref{Appendix:ProofGagliardo-Nirenberg} we prove Theorem \ref{Theorem:Gagliardo-Nirenberg} by extending the proof given by Aubin to any smooth function $u:M\to\mathbb{R}$. As another consequence of Theorem \ref{Theorem:Gagliardo-Nirenberg}, in Appendix \ref{Subsection:Appendix:ProofEquivalentNorms} we prove that several interesting and familiar norms in $H_g^m(M)$ are equivalent. Even if this is well known, it is not easy to find the proof of these equivalences in the existing literature. Therefore, we present a unified approach based on Theorem \ref{Theorem:Gagliardo-Nirenberg}, integration by parts, and the G\aa rding inequality that might be interesting in its own right.

\medskip

\begin{remark}
It is worth noticing that the exponent \(p = 2^*_{1,d}\) appears as a limiting value in the corresponding Gagliardo–Ni\-ren\-berg interpolation inequality, but its criticality depends entirely on whether \(m = 2\) (the case of the pure Kirchhoff–Boussinesq equation). In the case of \(m > 2\) and \(p = 2^*_{1,d}\), the constant \(\Upsilon \) in the interpolation inequality is strictly less than one, allowing us to control the terms arising from the \(p\)-Laplacian in terms of the Sobolev norm and the \(L^{2}\)-norm, yielding the Palais–Smale condition (see Propositions \ref{Proposition:CriticalGN} and \ref{Propo:Palais-Smale} below). However, when \(m = 2\) and \(p = 2^*_{1,d}\), the constant becomes \(\Upsilon = 1\), and the \(L^{2}\)-norm term in Propositions \ref{Proposition:CriticalGN} vanishes, preventing us from proving the Palais–Smale condition in this setting. Moreover, even when symmetries allow us to handle the critical nonlinearity \(r = 2^*_{m,d}\), this approach does not seem to work for the critical parameter \(p\), which calls for new ideas.
Consequently, it remains unclear whether this exponent represents a genuine threshold for compactness in the Kirchhoff–Boussinesq setting. In particular, it would be interesting to determine whether the embedding associated with the Kirchhoff–Boussinesq operator loses compactness at \(p = 2^*_{1,d}\) and, if so, whether one can construct an explicit concentrating sequence exhibiting such a loss of compactness. This leads to the following open question: Does the limiting exponent \(p = 2^*_{1,d}\) constitute a genuine critical threshold for compactness? A positive answer would provide a concrete concentration–compactness framework for the analysis of Kirchhoff–Boussinesq problems at this limiting exponent.
\end{remark}

\bigskip

This paper is organized as follows. In Section \ref{Section:VariationalSetting} we provide the functional framework related with the Nehari set method described in \cite{SzulkinWeth2010} and study the main properties of this set. Section \ref{Section:SymmetricSetting} is devoted to the variational setting with symmetries needed to handle the lack of compactness of the critical Sobolev exponent nonlinearities. Finally, in Section \ref{Section:CriticalPointTheory}, we sketch the critical point theory given in \cite{SzulkinWeth2010}, and we prove our main result and its corollaries. We also include an Appendix where we give the proof of the Gagliardo-Nirenberg interpolation inequality, Theorem \ref{Theorem:Gagliardo-Nirenberg}, and as a consequence of this and G\aa rding's inequality, we provide several equivalent norms in the space $H_g^m(M)$, which may be of independent interest.


    
	\section{Variational setting and Nehari set}\label{Section:VariationalSetting}

\medskip
Let $m\geq 2$ be an integer, $(M,g)$ be a closed Riemannian manifold of dimension $d>2m$, and $p,q,r\in\mathbb{R}$ satisfying $2<p<d$ and $p<q<r\leq 2_{m,d}^*$. 

In what follows, we denote the Lebesgue spaces associated with $(M,g)$ by $L_g^s(M), s\geq 1$, with the norm 
\[
\Vert u\Vert_s:=\left( \int_M \vert u\vert^s \right)^{1/s}.
\]
We define the Sobolev space $H_g^m(M)$ as the completion of $C^\infty(M)$ with respect to the norm
\begin{equation}\label{Equation:UsualSobolevNorm}
\|v \|_{H_g^{m}(M)}=\left[ \sum_{j=0}^{m}  \int_{M}   |\nabla^{j} v |^{2} dV_g  \right]^{1/2}, 
\end{equation}
endowed with the inner product
\begin{equation}\label{Equation:UsualSobolevInnerProduct}
\langle u,v \rangle_{H_g^{m}(M)}:= \sum_{j=0}^{m}  \int_{M}   \langle\nabla^{j} u , \nabla^{j}v\rangle_g\; dV_g,\quad u,v\in H_g^m(M) , 
\end{equation}
where $\nabla^j u$ denotes the $j$-th covariant derivative of $u$ and 
\[
\langle \nabla^j u,\nabla^j v\rangle_g:=g^{i_1k_1}\cdots g^{i_jk_j}(\nabla^j u)_{i_1\cdots i_j}(\nabla^j v)_{k_1\cdots k_j},
\]
denotes the Riemannian metric extended to tensor fields.

Denote by $C^\infty_+(M)$ the subset of positive smooth functions defined on $M$. For a fixed $\textbf{a}:=(a_0,a_1,\ldots,a_{m-1},a_m)\in C^\infty_+(M)\times[0,\infty)^{m-1}\times(0,\infty)$, recall the operator $P_{\textbf{a}}^m$ given in \eqref{Def:Operator}.  Green's identities imply that
\begin{equation}\label{Equation:PowerLaplacian}
\int_M v(-\Delta_g)^{i}u \; dV_g =
\begin{cases}
\int_M \Delta_g^{i/2} v\Delta_g^{i/2} u \; dV_g, & i \text{ even,} \\
\int_M \langle \nabla\Delta_g^{(i-1)/2}v, \nabla\Delta_g^{(i-1)/2} u\rangle_g \; dV_g, & i \text{ odd},
\end{cases}
\end{equation}
for any $u,v\in C^\infty(M)$. It follows that 
\begin{equation} \label{Equation:InteriorProductOperator}
\begin{split}
\langle u,v \rangle_{\textbf{a}}&:=\int_M vP_{\textbf{a}}^mu\ dV_g\\
&= \sum_{\substack{i=0\\ i\  even}}^m  \int_M a_i \Delta_g^{i/2} v\Delta_g^{i/2} u \; dV_g
+ \sum_{\substack{i=0\\ i\  odd}}^m \int_M a_i\langle \nabla\Delta_g^{(i-1)/2}v, \nabla\Delta_g^{(i-1)/2} u\rangle_g \; dV_g,\quad   \forall\;u,v\in C^\infty(M).
\end{split}
\end{equation}
defines a symmetric bilinear form that extends  to a continuous symmetric bilinear form in $H_g^m(M)\times H_g^m(M)$, see \cite{Robert2011}. Since $a_0>0$, this bilinear form is an inner product in $H_g^m(M)$ and induces a norm in this space given by
\begin{equation}\label{Equation:EquivalentNorm}
\Vert u\Vert_{\textbf{a}}:=\sqrt{\langle u,u \rangle_{\textbf{a}}} = \left( \sum_{\substack{i=0\\ i\  even}}^m  \int_M a_i \vert \Delta_g^{i/2}  u\vert^2 \; dV_g
+ \sum_{\substack{i=0\\ i\  odd}}^m \int_M a_i \vert \nabla\Delta_g^{(i-1)/2}u\vert_g^2 \; dV_g\right)^{1/2},\quad u\in H_g^m(M)..
\end{equation}
Notice that
\[
\Vert u\Vert_{\textbf{a}}=\sqrt{\int_M u P_{\textbf{a}}^m u\; dV_g}\qquad\forall\;u\in C^\infty(M).
\]
Moreover, as a consequence of the Gagliardo-Nirenberg interpolation formula given in Theorem \ref{Theorem:Gagliardo-Nirenberg}, integration by parts, and G\aa rding's inequality, we show in Appendix \ref{Subsection:Appendix:ProofEquivalentNorms} that  $\Vert\cdot\Vert_{\textbf{a}}$ defines an equivalent norm in $H_g^m(M)$. We summarize this fact in the following result.

\begin{proposition}\label{Proposition:EquivalentNorms}
For any $\textbf{a}:=(a_0,a_1,\ldots,a_{m-1},a_m)\in C^\infty_+(M)\times[0,\infty)^{m-1}\times(0,\infty)$, the norm $\Vert \cdot\Vert_{\textbf{a}}$ is equivalent to the usual Sobolev norm $\Vert\cdot\Vert_{H_g^m(M)}$.
\end{proposition}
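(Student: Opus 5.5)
We prove the two inequalities $\Vert u\Vert_{\textbf{a}}\leq C\Vert u\Vert_{H_g^m(M)}$ and $\Vert u\Vert_{H_g^m(M)}\leq C\Vert u\Vert_{\textbf{a}}$ for $u\in C^\infty(M)$, and conclude by density of $C^\infty(M)$ in $H_g^m(M)$.

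\emph{Upper bound.} Since $a_0\in C^\infty(M)$ is bounded and $a_1,\dots,a_m$ are constants, it suffices to bound $\int_M|\Delta_g^{k}u|^2\,dV_g$ and $\int_M|\nabla\Delta_g^{k}u|_g^2\,dV_g$ for $2k\leq m$ and $2k+1\leq m$. Pointwise, $\Delta_g^k u$ is a contraction of $\nabla^{2k}u$ with the metric. Hence $|\Delta_g^ku|\leq C(d,k)|\nabla^{2k}u|_g$ and $|\nabla\Delta_g^ku|_g\leq C|\nabla^{2k+1}u|_g$, where covariant derivatives commute with metric contractions because $\nabla g=0$. Integrating gives $\Vert u\Vert_{\textbf{a}}^2\leq C\Vert u\Vert^2_{H_g^m(M)}$. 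This step is routine.

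\emph{Lower bound.} First, $\Vert u\Vert_{\textbf{a}}^2\geq (\min_M a_0)\Vert u\Vert_2^2+a_m A_m(u)$, where $A_m(u)=\int_M|\Delta_g^{m/2}u|^2$ if $m$ is even and $A_m(u)=\int_M|\nabla\Delta_g^{(m-1)/2}u|^2$ if $m$ is odd. All intermediate terms are nonnegative, so they can be dropped. It therefore suffices to prove the elliptic estimate
\[
\int_M|\nabla^m u|_g^2\,dV_g\leq C\Big(A_m(u)+\Vert u\Vert_2^2\Big),
\]
and then to control the intermediate derivatives $\nabla^j u$, $0<j<m$.

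\emph{Intermediate derivatives.} Apply Theorem~\ref{Theorem:Gagliardo-Nirenberg} with $\kappa_1=\kappa_2=\kappa_3=2$ and $\Upsilon=j/m$. The relation $\frac12=\frac jd+(\frac12-\frac md)\frac jm+\frac{1-j/m}{2}$ is an identity, so the hypotheses hold. This gives
\[
\Vert\nabla^ju\Vert_2\leq C\big(\Vert\nabla^mu\Vert_2+\Vert u\Vert_2\big)^{j/m}\Vert u\Vert_2^{1-j/m}\leq C\big(\Vert\nabla^mu\Vert_2+\Vert u\Vert_2\big),
\]
and so $\Vert u\Vert_{H_g^m}\leq C(\Vert\nabla^mu\Vert_2+\Vert u\Vert_2)$.

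\emph{Elliptic estimate (main step).} This is where the real work lies. The plan is to integrate by parts and commute covariant derivatives, using the Ricci identities. For instance, Bochner's formula gives
\[
\int_M|\nabla^2u|^2=\int_M(\Delta_gu)^2-\int_M\mathrm{Ric}(\nabla u,\nabla u),
\]
and inductively
\[
\int_M|\nabla^m u|^2=A_m(u)+\sum_{i<m}\int_M(\text{curvature})*\nabla^iu*\nabla^{i'}u,
\]
with $i,i'\leq m-1$. Because $M$ is compact, the curvature tensors and their derivatives are bounded. The error terms are therefore bounded by $C\Vert u\Vert_{H_g^{m-1}}^2$. Combining this with the Gagliardo--Nirenberg step and Young's inequality, $\Vert u\Vert^2_{H^{m-1}_g}\leq\varepsilon\Vert\nabla^mu\Vert_2^2+C_\varepsilon\Vert u\Vert_2^2$. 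Absorbing the $\varepsilon$ term on the left yields the estimate.

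Alternatively, one can invoke G\aa rding's inequality directly for the strongly elliptic operator $(-\Delta_g)^m$, whose principal symbol is $|\xi|^{2m}$: $\langle(-\Delta_g)^mu,u\rangle\geq c\Vert u\Vert_{H^m}^2-C\Vert u\Vert_2^2$. The left-hand side equals $A_m(u)$ by \eqref{Equation:PowerLaplacian}. The only delicate point is tracking the commutator terms in the induction, or equivalently justifying G\aa rding's inequality on the manifold via a partition of unity. Once the estimate holds, the lower bound $\Vert u\Vert_{H_g^m}^2\leq C\Vert u\Vert_{\textbf{a}}^2$ follows.
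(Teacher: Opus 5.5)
Your argument is correct, and its core matches the paper's. Discarding the nonnegative intermediate terms reduces the lower bound to $\Vert u\Vert_\ast^2=\Vert u\Vert_2^2+A_m(u)\lesssim\Vert u\Vert_{\textbf{a}}^2$, and the paper then closes with G\aa rding's inequality for $(-\Delta_g)^m$ (Lemma \ref{Lemma:GardingInequality}), which is your stated alternative.

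The differences are in the supporting steps.

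For the upper bound, the paper goes through $\Vert u\Vert_{\textbf{a}}\lesssim\Vert u\Vert_\Delta$ and then cites the equivalence $\Vert\cdot\Vert_\Delta\sim\Vert\cdot\Vert_{H_g^m(M)}$ from the literature (Lemma \ref{Lemma:EquivalenceUsualNorms}). Your remark that $\Delta_g^ku$ and $\nabla\Delta_g^ku$ are metric contractions of $\nabla^{2k}u$ and $\nabla^{2k+1}u$ shows this direction is pointwise and needs no citation.

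For the lower bound, your primary route via the Ricci identities is a self-contained proof of the one G\aa rding-type estimate actually needed. The price is commutator bookkeeping, including one step your sketch takes for granted. Each error term has the form $\int T*\nabla^au*\nabla^bu$, with $T$ built from the curvature and its derivatives and $a+b\le 2m-2$. You must integrate by parts to move derivatives off the higher factor until $a,b\le m-1$; this terminates because $a+b$ never increases.

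That route also genuinely needs an interpolation exponent $\Upsilon<1$ (your $\Upsilon=j/m$), since absorption requires a small factor in front of $\Vert\nabla^mu\Vert_2^2$. The $\Upsilon=1$, $\kappa_3<2$ version used in the paper's Lemma \ref{Lemma:EstimatesDerivatives} gives no such factor. Conversely, if you use G\aa rding, your intermediate-derivative step becomes redundant, because G\aa rding already controls the full $H_g^m$ norm.

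Your constant $\min_Ma_0$ is also the right one: the paper's factor $\max\{1/a_m,1,\underline{a}_0\}$ should read $\max\{1/a_m,1/\underline{a}_0\}$.
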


We also require the following consequence of the Gagliardo-Nirenberg interpolation inequality, relating the $L^p$-norm of the gradient with the  norm $\Vert\cdot\Vert_{\textbf{a}}$.

\begin{proposition}\label{Proposition:CriticalGN}
For any $m,d\in\mathbb{N}$ such that $2m<d$ and any $2\leq p\leq 2_{1,d}^*=\frac{2d}{d-2}$, there exists $\Upsilon\in[\frac{1}{m},\frac{2}{m}]$ and a positive constant $C>0$ such that
\[
\left( \int_M \vert \nabla u\vert^p_g \; dV_g\right)^{1/p}\leq C \Vert u\Vert_{\textbf{a}}^{\Upsilon}\Vert u\Vert_2^{(1-\Upsilon)},\qquad\forall\; u\in C^\infty(M).
\]
Moreover
\[
\Upsilon = \begin{cases} \frac{1}{m} & \text{iff}\quad p=2\\
 \frac{2}{m} & \text{iff}\quad p=2_{1,d}^*=\frac{2d}{d-2}.
\end{cases}
\]
In particular, when $m=2$ and $p=2_{1,d}^*$, $\Upsilon=1$ and the inequality becomes 
\[
\left( \int_M \vert \nabla u\vert^p_g \; dV_g\right)^{1/p}\leq C \Vert u\Vert_{\textbf{a}}.
\]
\end{proposition}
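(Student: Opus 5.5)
Apply Theorem \ref{Theorem:Gagliardo-Nirenberg} with $j=1$, $\kappa_1=p$, $\kappa_2=\kappa_3=2$. The balance condition then reads
\[
\frac1p=\frac1d+\Big(\frac12-\frac md\Big)\Upsilon+\frac{1-\Upsilon}{2}=\frac1d+\frac12-\frac{m}{d}\Upsilon,
\qquad\text{so}\qquad \Upsilon=\frac{d}{m}\Big(\frac1d+\frac12-\frac1p\Big).
\]
This is affine and increasing in $1/p'$, hence increasing in $p$. At $p=2$ it gives $\Upsilon=\frac1m$, and at $p=\frac{2d}{d-2}$ it gives $\frac1p=\frac12-\frac1d$, so $\Upsilon=\frac2m$. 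Consequently $\Upsilon\in[\frac1m,\frac2m]$ for $2\le p\le 2_{1,d}^*$. By strict monotonicity the two endpoint values are attained exactly at the two endpoint exponents, which gives the ``iff'' statements. The theorem also requires $\frac jm=\frac1m\le\Upsilon\le1$. The lower bound is automatic, and the upper bound holds because $m\ge2$ gives $\frac2m\le1$. For $m=2$ and $p=2_{1,d}^*$ we get $\Upsilon=1$.

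The theorem then yields
\[
\|\nabla u\|_p\le C\big(\|\nabla^m u\|_2+\|u\|_2\big)^{\Upsilon}\|u\|_2^{1-\Upsilon}.
\]
Here $|\nabla u|_g=|\nabla^1 u|$, so the left-hand side matches.

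It remains to bound $\|\nabla^m u\|_2+\|u\|_2$ by a constant times $\|u\|_{\textbf a}$. Both terms are bounded by $\|u\|_{H_g^m(M)}$ up to constants, directly from the definition \eqref{Equation:UsualSobolevNorm}. Proposition \ref{Proposition:EquivalentNorms} then gives $\|u\|_{H_g^m(M)}\le C\|u\|_{\textbf a}$. Raising to the power $\Upsilon\ge0$ and absorbing constants gives the claimed inequality for all $u\in C^\infty(M)\subset H_g^m(M)$. When $\Upsilon=1$, the factor $\|u\|_2^{0}$ drops out, which is the special case $m=2$, $p=2_{1,d}^*$.

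The main point to verify is the admissibility of the parameters in Theorem \ref{Theorem:Gagliardo-Nirenberg}: $\kappa_i\in[1,\infty)$, which holds since $p<\infty$ because $d>2$, together with $0<j=1<m$ and $2m<d$. The only potentially delicate input is the norm equivalence, and that is exactly Proposition \ref{Proposition:EquivalentNorms}. Beyond these checks, the argument is just the computation of $\Upsilon$ above.
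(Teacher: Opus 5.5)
Your proposal is correct and follows essentially the same route as the paper. You apply Theorem \ref{Theorem:Gagliardo-Nirenberg} with $j=1$, $\kappa_1=p$, $\kappa_2=\kappa_3=2$, and solve the balance condition for $\Upsilon$ to get the range $[\frac1m,\frac2m]$ together with the endpoint characterizations. You then bound $\|\nabla^m u\|_2+\|u\|_2$ by a constant times $\|u\|_{\textbf{a}}$ via Proposition \ref{Proposition:EquivalentNorms}; your strict-monotonicity argument for the ``iff'' claims is just a compact version of the paper's two-sided inequality check.
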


\begin{proof}
For $k_1=p\in[2,2^*_{1,d}], \kappa_2=\kappa_3=2$, $j=1$, set $\Upsilon\in\mathbb{R}$ given by the identity
\begin{equation}\label{Equation:UpsilonP-Laplacian}
\frac{1}{p} = \frac{1}{d} + \frac{1}{2} - \frac{\Upsilon m}{d}. 
\end{equation}
We now confirm that this choice of constants satisfies the hypotheses of Theorem \ref{Theorem:Gagliardo-Nirenberg}. Indeed, we have that
\[
\frac{1}{\kappa_1}=\frac{1}{p} = \frac{1}{d} + \frac{1}{2} - \frac{\Upsilon m}{d} = \frac{1}{d} + \left( \frac{1}{2} - \frac{m}{d} \right)\Upsilon + \frac{1-\Upsilon}{2} 
= \frac{j}{d} + \left( \frac{1}{\kappa_3} - \frac{m}{d} \right)\Upsilon + \frac{1-\Upsilon}{\kappa_2},
\]
 Thus, $\Upsilon$ satisfies the first identity in Theorem \ref{Theorem:Gagliardo-Nirenberg}; we verify that $\Upsilon\in [1/m,2/m]=[j/m,1]$. To see this, as $m\geq 2$, then $d>4$ and
\[
2\leq p \leq 2^*_{1,d}=\frac{2d}{d-2} < d,
\]
thus, by \eqref{Equation:UpsilonP-Laplacian}, we get that
\[
\frac{d-2}{2d}\leq \frac{1}{p} = \frac{1}{d} + \frac{1}{2} - \frac{\Upsilon m}{d} \leq \frac{1}{2}.
\]
From the left-hand side of this inequality, we get that
\[
\Upsilon\leq \frac{2}{m}\leq 1
\]
because $m\geq 2$. Moreover, $\Upsilon=\frac{2}{m}$ if and only if $p=2_{1,d}^*$, and in this case, $\Upsilon=1$ if and only if $m=2$. 

Now, from the right-hand side of the same inequality, we obtain that 
\[
\frac{j}{m}=\frac{1}{m}\leq\Upsilon,
\]
with equality if and only if $p=2$.
Therefore, $\Upsilon\in[1/m,1]$ and by Theorem \ref{Theorem:Gagliardo-Nirenberg}, there exists $C_p>0$ such that
\begin{align*}
&\left( \int_M \vert \nabla u\vert_g^p \;dV_g \right)^{1/p}
\leq C_p\left[   \left(\int_M \vert \nabla^m u\vert_g^2  \;dV_g\right)^{1/2} + \left(\int_M u^2  \;dV_g\right)^{1/2} \right]^\Upsilon
\left[\int_M u^2\; dV_g \right]^{(1-\Upsilon)/2}.
\end{align*}
Then, by Proposition \ref{Proposition:EquivalentNorms}, we have that there exists a constant $C_\textbf{a}>0$ such that
\begin{align*}
\left( \int_M \vert \nabla u\vert_g^p \;dV_g \right)^{2/p}
&\leq C_p^2 \left[   \left(\int_M \vert \nabla^m u\vert_g^2  \;dV_g\right)^{1/2} + \left(\int_M u^2  \;dV_g\right)^{1/2} \right]^{2\Upsilon}
\left[\int_M u^2\; dV_g \right]^{(1-\Upsilon)}\\
&\leq 4C_p^2 \left[  \int_M \vert \nabla^m u\vert_g^2  \;dV_g + \int_M u^2  \;dV_g\right]^{\Upsilon}
\left[\int_M u^2\; dV_g \right]^{(1-\Upsilon)}\\
&\leq 4C_p^2\left[ \sum_{j=0}^m \int_M \vert \nabla^j u\vert_g^2  \;dV_g \right]^{\Upsilon}\left[\int_M u^2\; dV_g \right]^{(1-\Upsilon)}\\
&\leq 4C_{\textbf{a}}C_p^2\Vert u\Vert_{\textbf{a}}^{2\Upsilon}\Vert u\Vert_2^{2(1-\Upsilon)},
\end{align*}
where we conclude by taking $C:=2C_{\textbf{a}}^{1/2}C_p$.
\end{proof}

By Proposition \ref{Proposition:CriticalGN}, for any $u\in H_g^m(M)$ we have that $\vert \nabla u\vert_g\in L^p_g(M)$ and by the Cauchy-Schwarz inequality applied to the metric $\langle \cdot,\cdot\rangle_g$ together with Hölder inequality, for any $u,v\in H_g^m(M)$, we have that $\vert\nabla u\vert^{p-2}\langle \nabla u,\nabla v\rangle_g\in L^1(M)$. Also, by the Sobolev embedding 
\begin{equation}\label{Equation:SobolevEmbedding}
H_g^{m}(M)\hookrightarrow L_g^s(M),\quad s\in[1,2_{m,d}^*],
\end{equation}
together with the compactness of $M$, the growth estimate \ref{f_1} and the Hölder inequality, we have that $f_q(u)uv,\ \vert u\vert^{r-2}uv\in L_g^1(M)$. Consequently, we are now in a position to define the weak solutions to problem \eqref{Problem:Main}.

\begin{definition}
   A function $u\in H^{m}_{g}(M)$ is called a weak solution of \eqref{Problem:Main} if 
\begin{align*}
   \langle u,v \rangle_{\textbf{a}} \pm \int_{M}  |\nabla u |^{p-2} \langle \nabla u, \nabla v \rangle_g \, dV_g= \int_{M} f_q(u) v \,  dV_g  +\int_{M}  | u|^{r-2}  u  v \,  dV_g , 
\end{align*}
for all $v \in H^{m}_{g}(M)$.
\end{definition}

Denote by $F_q$ the antiderivative of $f_q$, that is 
\[      F_q(t)=\begin{cases}
        \int_0^t f_q(s) \; ds & t\geq0\\
        -\int_t^0 f_q(s)\; ds & t\leq0
        \end{cases},
        \]
and define the energy functional $I:H^{m}_{g}(M) \to \mathbb{R}$ 
\begin{align*}
   I(u) := \dfrac{1}{2} \| u \|_\textbf{a}^{2}  \pm \frac{1}{p} \int_{M}  |\nabla u |^{p} \,  dV_g - \int_{M} F_q(u) \, dV_g   -\frac{1}{r} \int_{M}  |u |^{r}  \, dV_g  
\end{align*}
Based on the previous remarks, this functional is well defined. Since $2<p<q<r\leq 2_{m,d}^*$, and as $f_q$ is continuous and satisfies \ref{f_1} and \ref{f_2},  Lemma \ref{Proposition:CriticalGN} and the Sobolev embedding \eqref{Equation:SobolevEmbedding} yield that $I$ is a $C^1$-functional, and its derivative is given by

\begin{equation}\label{Equation:DerivativeFunctional}
  I^{\prime}(u)v   =\langle u,v \rangle_{\textbf{a}} \pm \int_{M}  |\nabla u |^{p-2} \langle \nabla u,  \nabla v \rangle_g\,  dV_g - \int_{M} f_q(u) v \,  dV_g  -\int_{M}  |u |^{r-2}  u  v  \,  dV_g   
\end{equation}
for all $u,v \in H^{m}_{g}(M)$. Hence, weak solutions to \eqref{Problem:Main} are precisely the critical points of $I$. The nontrivial ones lie in the Nehari set associated to the functional $I$, given by
 \begin{equation}\label{DefinitionNehari}
 \begin{split}
     \mathcal{N}&:= \{u \in  H^{m}_{g}(M) \backslash \{0\}:\ I'(u)u =0 \}\\
     &=\left\{u \in  H^{m}_{g}(M) \backslash \{0\}\ :\ \Vert u\Vert_{\textbf{a}}^2 \pm \int_{M}  |\nabla u |^{p} \,  dV_g = \int_{M} f_q(u) u \,  dV_g +\int_{M}  |u |^{r}   \,  dV_g   \right\}
     \end{split}
 \end{equation}

 In Lemma \ref{Lemma:NehariNonEmpty} below, we show that $\mathcal{N}$ is nonempty. Notice that
 \begin{equation}\label{Equation:FunctionalDefinedOnNehari}
 I(u)= \left(\frac{p-2}{2p}\right)\Vert u\Vert_{\textbf{a}}^2 + \frac{1}{p}\int_M \left[ f_q(u)u - pF_q(u)  \right]\;dV_g + \left(\frac{r-p}{rp}\right)\int_M \vert u\vert^r\; dV_g,\quad\forall\;u\in\mathcal{N}.
 \end{equation}

 To see that $\mathcal{N}$ is nonempty, we need the following technical result.

 \begin{lemma}\label{Lemma:Properties-f} Let $q\in(p,2_{m,d}^*)$.
 \begin{enumerate}[label=(\roman*)]
 \item \label{Item:Crecimiento} If conditions \ref{f_1} and \ref{f_2} hold true, then, for any $\varepsilon>0$, there exists $C_\varepsilon>0$ such that
\begin{align}\label{crescimentoforigem}
|f_q(t)|\leq \varepsilon |t|+ C_\varepsilon |t|^{q-1}.  
\end{align}
and
\begin{align}\label{Crescimentoforigem}
|F_q(t)|\leq \frac{\varepsilon}{2} |t|^2+ \frac{C_\varepsilon}{q} |t|^{q}, 
\end{align}
for every $t\in\mathbb{R}$.
\item \label{Item:CrecimientoCombinado} If conditions \ref{f_3} and \ref{f_4} hold true, then the map $\xi:\mathbb{R}\rightarrow\mathbb{R}$ given by
\begin{equation}\label{crescente}
   \xi(t):= tf_q(t)-pF_q(t)\ 
\end{equation}
is increasing for $t \in (0,\infty)$ and decreasing for $t \in (-\infty,0)$. In particular, 
\[
\xi(t)=tf_q(t)-pF_q(t)> 0, \qquad \forall\;t\in \mathbb{R} \setminus \{0\}.
\]
 \end{enumerate}
 
 \end{lemma}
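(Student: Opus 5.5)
For part \ref{Item:Crecimiento}, I would run the usual splitting argument. Fix $\varepsilon>0$. Condition \ref{f_2} gives $\delta\in(0,1)$ with $|f_q(t)|\leq\varepsilon|t|$ whenever $|t|\leq\delta$.

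For $|t|\geq\delta$, condition \ref{f_1} gives
\[
|f_q(t)|\leq C(1+|t|^{q-1})\leq C(\delta^{1-q}+1)|t|^{q-1},
\]
because $1\leq \delta^{1-q}|t|^{q-1}$ on that range. Setting $C_\varepsilon:=C(\delta^{1-q}+1)$ yields \eqref{crescimentoforigem} for every $t$. Note that continuity of $f_q$ and \ref{f_2} force $f_q(0)=0$. Then \eqref{Crescimentoforigem} follows by integrating \eqref{crescimentoforigem} between $0$ and $t$: for $t\geq0$ directly, and for $t<0$ using the sign convention in the definition of $F_q$, which gives $|F_q(t)|\leq\left|\int_0^{|t|}(\varepsilon s+C_\varepsilon s^{q-1})\,ds\right|$.

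For part \ref{Item:CrecimientoCombinado}, I would first treat $t>0$ and then transfer to $t<0$ by parity. Condition \ref{f_4} makes $f_q$ odd, so $F_q$ is even and $\xi$ is even; hence it suffices to show that $\xi$ is increasing on $(0,\infty)$ and that $\xi(t)>0$ there.

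Write $h(t):=f_q(t)/t^{p-1}$ for $t>0$; by \ref{f_3} it is increasing. Since $f_q$ is only continuous, I avoid differentiating $h$. Instead, for $0<s<t$ I compute
\[
\xi(t)-\xi(s) = t f_q(t)-s f_q(s) - p\int_s^t f_q(\tau)\,d\tau .
\]
The key estimate uses $f_q(\tau)=h(\tau)\tau^{p-1}\leq h(t)\tau^{p-1}$ for $\tau\leq t$, which gives
\[
p\int_s^t f_q(\tau)\,d\tau \leq h(t)(t^p-s^p) = t f_q(t)-h(t)s^p .
\]
Therefore
\[
\xi(t)-\xi(s)\geq h(t)s^p - s f_q(s) = s^p\bigl(h(t)-h(s)\bigr)>0 ,
\]
so $\xi$ is strictly increasing on $(0,\infty)$.

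For positivity, I let $s\to0^+$. We have $\xi(s)\to0$, since $sf_q(s)\to0$ and $F_q(s)\to0$ by continuity. Monotonicity then gives $\xi(t)\geq\lim_{s\to0^+}\xi(s)=0$. To get strictness, pick $0<s<t$: then $\xi(t)>\xi(s)\geq0$.

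Evenness of $\xi$ transfers both statements to $(-\infty,0)$: $\xi$ is decreasing there and positive.

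The main point requiring care is avoiding derivatives of $f_q$; the integral comparison above handles this. A secondary subtlety is the sign of $h$. The strict increase $\xi(t)-\xi(s)>0$ uses only $h(t)>h(s)$ and $s^p>0$, so it needs no sign information on $h$. Positivity of $\xi$ comes from the limit at $0$, not from any sign of $f_q$. I also note that only the limit $\xi(s)\to0$ is used, not the value $\xi(0)$ itself, which is harmless anyway.
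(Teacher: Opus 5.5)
Your proposal is correct and takes essentially the same route as the paper. Both use the same $\delta$-splitting for \eqref{crescimentoforigem} followed by integration, and for part (ii) the same comparison of $s f_q(s)$ and $p\int_s^t f_q(\tau)\,d\tau$ against $\frac{f_q(t)}{t^{p-1}}$, with evenness from \ref{f_4}. Your treatment of positivity via $\lim_{s\to0^+}\xi(s)=0$, and your remark that $f_q(0)=0$, are slightly more careful than the paper's bare appeal to $\xi(0)=0$.
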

 
 \begin{proof}
 By condition \ref{f_2},   given $\varepsilon>0$, there exists $\delta=\delta(\varepsilon)>0$ such that $|t| < \delta$ implies $ |f_q(t)| < \varepsilon |t|$.

For $|t|\ge \delta$, we use the inequality
\begin{equation*}
    1 = \frac{1}{|t|^{q-1}} |t|^{q-1} \leq \frac{1}{\delta^{q-1}} |t|^{q-1},
\end{equation*}
together  with \ref{f_1} to conclude that
\begin{equation*}
    |f_q(t)| \le C (1 + |t|^{q-1}) \le C(1 + \delta^{-(q-1)}) |t|^{q-1},\qquad \vert t\vert\geq\delta,
\end{equation*}
So, if we define $C_\varepsilon := C (1 + \delta^{-(q-1)})$, which depends on $\varepsilon$ because $\delta=\delta(\varepsilon)$, we have that
\[
|f_q(t)| \le \varepsilon |t| \le \varepsilon |t| + C_\varepsilon |t|^{q-1}, \quad \vert t\vert<\delta
\] 
and 
\[
|f_q(t)| \le C_\varepsilon |t|^{q-1} \le \varepsilon |t| + C_\varepsilon |t|^{q-1}, \quad \vert t\vert\geq \delta,
\]
which proves the first estimate of item \ref{Item:Crecimiento}.

 Furthermore, from the definition of $F_q(t)$, it follows that
\begin{equation*}
    |F_q(t)| \le \int_0^{|t|} (\varepsilon s + C_\varepsilon s^{q-1}) ds = \frac{\varepsilon}{2} |t|^2 + \frac{C_\varepsilon}{q} |t|^q.
\end{equation*}
This proves \ref{Item:Crecimiento}.

\medskip

To prove the second assertion, suppose $0<s<t$. First,  
 \ref{f_3} yields that
\[
s f_q(s) = \frac{f_q(s)}{s^{p-1}} s^p < \frac{f_q(t)}{t^{p-1}} s^p.
\]
In a similar way,
\begin{equation*}
    p \int_s^t f_q(\tau) \, d\tau = p \int_s^t \frac{f_q(\tau)}{\tau^{p-1}} \tau^{p-1} \, d\tau
< p \int_s^t \frac{f_q(t)}{t^{p-1}} \tau^{p-1} \, d\tau
= \frac{f_q(t)}{t^{p-1}} (t^p - s^p).
\end{equation*}
Consequently, combining both inequalities, we obtain that
\begin{align*}
sf_q(s) - p F_q(s) &= sf_q(s) - p\int_0^s f_q(\tau) d\tau =  \frac{f_q(s)}{s^{p-1}} s^p - p F_q(t) + p \int_s^t f_q(\tau) \, d\tau \\
&< \frac{f_q(t)}{t^{p-1}} s^p - p F_q(t) + \frac{f_q(t)}{t^{p-1}} (t^p - s^p) \\
&= \frac{f_q(t)}{t^{p-1}} t^p - p F_q(t) = t f_q(t) - p F_q(t),
\end{align*}
where we conclude that $\xi$ is increasing in $(0,\infty)$. As condition \ref{f_4} holds true and $f_q$ is odd, then the primitive $F_q$ is even. Therefore, for any $t>0$ we have 
\[
\xi(-t)=(-t)f_q(-t) - p F_q(-t)= tf_q(t) - pF_q(t) = \xi(t),
\]
deducing that $\xi$ is also an even function and showing that it decreases in $(-\infty,0)$.

To conclude the last part of item \ref{Item:CrecimientoCombinado}, just observe that $f_q(0)=F_q(0)$, and so, $\xi(0)=0$.

\end{proof}

Using this result, we now prove that the Nehari set is nonempty and that any point $u\neq 0$ in $H_g^m(M)$ can be projected onto it. To this end, for any $u\in H^m_g(M)\smallsetminus\{0\}$, we define
\[
I_u:[0,\infty)\to \mathbb{R},\qquad  I_u(t):=I(tu).
\]
Notice that $I_u$ is of class $C^1$ and that $tu \in \mathcal{N} $  if and only if $I'_u(t)=0$. We have the following result

\begin{lemma}\label{Lemma:NehariNonEmpty}
For each $u \in H^{m}_{g}(M) \setminus \{0\}$, there exists a unique $t_u>0$ such that $t_u u \in \mathcal{N}$. Moreover, 
\[
\max_{t\in(0,\infty)} I_u(t)=I_u(t_u)>0
\]
  In particular, $\mathcal{N}\neq\emptyset$.
\end{lemma}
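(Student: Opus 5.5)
The plan is to study the fibering map $I_u(t)=I(tu)$ directly. For fixed $u\in H_g^m(M)\setminus\{0\}$, write $A:=\int_M|\nabla u|_g^p\,dV_g$ (finite by Proposition \ref{Proposition:CriticalGN}) and $B:=\int_M|u|^r\,dV_g>0$. Then for $t>0$
\[
I_u'(t)= t\Vert u\Vert_{\textbf{a}}^2 \pm t^{p-1}A-\int_M f_q(tu)u\,dV_g - t^{r-1}B .
\]
Since $tu\in\mathcal N$ iff $I_u'(t)=0$, I divide by $t^{p-1}>0$ and look at the zeros of
\[
h(t):= t^{2-p}\Vert u\Vert_{\textbf{a}}^2 \pm A - g(t),\qquad g(t):=\int_M \frac{f_q(tu)\,tu}{|tu|^{p}}\,|u|^p\,dV_g + t^{r-p}B ,
\]
where the integrand is taken to be $0$ on $\{u=0\}$. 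Note that $I_u'(t)$ and $h(t)$ have the same sign for $t>0$.

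\emph{Step 1 (strict monotonicity of $h$).} The term $t^{2-p}\Vert u\Vert_{\textbf{a}}^2$ is strictly decreasing because $p>2$ and $\Vert u\Vert_{\textbf{a}}>0$. The constant $\pm A$ does not affect monotonicity, so the sign of the $p$-Laplacian term plays no role here. For $g$, the term $t^{r-p}B$ is strictly increasing since $r>p$ and $B>0$. In the integral term, at points where $u(x)>0$, \ref{f_3} shows that $f_q(tu)/(tu)^{p-1}$ is increasing in $t$. At points where $u(x)<0$, oddness \ref{f_4} gives $f_q(tu)\,tu/|tu|^p=f_q(t|u|)/(t|u|)^{p-1}$, which is again increasing in $t$. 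Hence $g$ is strictly increasing, $h$ is strictly decreasing on $(0,\infty)$, and $h$ has at most one zero, which gives uniqueness of $t_u$.

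\emph{Step 2 (sign of $h$ at the endpoints).} Near $t=0$ I would not use $h$, since the $\varepsilon$-term blows up after dividing by $t^{p-1}$. Instead I use
\[
I_u'(t)/t=\Vert u\Vert_{\textbf{a}}^2\pm t^{p-2}A-\int_M \frac{f_q(tu)u}{t}\,dV_g - t^{r-2}B .
\]
By Lemma \ref{Lemma:Properties-f}\ref{Item:Crecimiento} and the Sobolev embedding \eqref{Equation:SobolevEmbedding}, the integral is bounded by $\varepsilon\Vert u\Vert_2^2+C_\varepsilon t^{q-2}\Vert u\Vert_q^q$. Since $p,q,r>2$, this gives $\liminf_{t\to0^+}I_u'(t)/t\ge \Vert u\Vert_{\textbf{a}}^2-\varepsilon\Vert u\Vert_2^2$, and choosing $\varepsilon$ small makes this positive. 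So $I_u'>0$, hence $h>0$, for small $t$.

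As $t\to\infty$, the integral term in $g$ is nonnegative: by Lemma \ref{Lemma:Properties-f}\ref{Item:CrecimientoCombinado}, $sf_q(s)>pF_q(s)$, and $F_q\ge 0$ is expected since $f_q>0$ on $(0,\infty)$ (to be checked from \ref{f_2}--\ref{f_3}; alternatively only monotonicity of the integral term is needed). Then $g(t)\ge t^{r-p}B\to\infty$ while $t^{2-p}\Vert u\Vert_{\textbf{a}}^2\pm A\to\pm A$, so $h(t)<0$ for large $t$. Continuity of $h$, which follows from continuity of $f_q$, the growth bound \eqref{crescimentoforigem} and dominated convergence, together with the intermediate value theorem, yields a zero $t_u>0$.

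\emph{Step 3 (maximum and positivity).} By Steps 1--2, $I_u'>0$ on $(0,t_u)$ and $I_u'<0$ on $(t_u,\infty)$. Hence $I_u(t_u)=\max_{t>0}I_u(t)$, and $I_u(t_u)>I_u(0)=0$, so $t_uu\in\mathcal N$ and $\mathcal N\neq\emptyset$.

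The only delicate point is the behavior near $t=0$ with the indefinite sign $\pm$ of the $p$-Laplacian term. This is why Step 2 uses the normalization $I_u'/t$ rather than $h$ there: it exploits $p>2$ so that $t^{p-2}A$ becomes negligible whatever its sign.
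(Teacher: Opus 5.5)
Your argument is correct. Its uniqueness part is the same as the paper's. The paper also divides the Nehari identity by $t^p$ (equivalently, $I_u'(t)$ by $t^{p-1}$) and uses \ref{f_3}, \ref{f_4} and $r>p$. It only phrases this as a contradiction between two roots $0<t<s$, whereas you state it as strict monotonicity of $h$.

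The real difference is in existence and in the maximum. The paper works with $I_u$ itself. Using $|F_q(s)|\le \frac{\varepsilon}{2}s^2+\frac{C_\varepsilon}{q}|s|^q$ and the Sobolev constant $C_1$, it shows $I_u>0$ on $(0,t_1]$ and $I_u(t)\to-\infty$, so an interior maximizer, hence a critical point, exists. Only afterwards does it use uniqueness of the critical point to identify $t_u$ as the maximum. You instead get existence from the intermediate value theorem applied to $h$. The sign pattern $I_u'>0$ on $(0,t_u)$ and $I_u'<0$ on $(t_u,\infty)$ then comes directly from the monotonicity of $h$, and this is exactly condition \ref{A_2} used later. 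Your endpoint analysis is also lighter: near $0$ you only need the limit of $I_u'(t)/t$, with no Sobolev-constant bookkeeping. The price is continuity of $h$, which is immediate since $I$ is $C^1$.

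One correction: your first justification as $t\to\infty$ does not work, because \ref{f_1}--\ref{f_4} do not force $f_q>0$ on $(0,\infty)$. For instance, $f_q(t)=|t|^{q-2}t-|t|^{p-2}t$ satisfies all four hypotheses; here $f_q(t)/t^{p-1}=t^{q-p}-1$ is increasing. Yet $f_q<0$ and $F_q<0$ on $(0,1)$. Lemma \ref{Lemma:Properties-f}\ref{Item:CrecimientoCombinado} only gives $sf_q(s)>pF_q(s)$, not $sf_q(s)\ge 0$. You must therefore use your fallback. By your Step 1, the integral part of $g$ is nondecreasing in $t$, so for $t\ge 1$ you have $g(t)\ge \int_M f_q(u)u\,dV_g+t^{r-p}B\to\infty$, hence $h(t)\to-\infty$. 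With that replacement the proof is complete.
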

\begin{proof}
Let $u \in H^{m}_{g}(M) \setminus \{0\}$. We divide the proof into three steps.

\smallskip
\textbf{Step 1.} \emph{ There exists $t>0$ such that $tu\in\mathcal{N}$.}
\smallskip

We first need to prove the existence of a critical point for $I_u$; to do this, by Rolle's Theorem, it suffices to show that there exists a sufficiently small $t>0$  such that $I(u)>0$ and to show that $I_u(t)\to-\infty$ as $t\to\infty$. 

First, notice that  the continuity of the Sobolev embedding $H_g^m(M)\hookrightarrow L_g^s(M)$ for $2\leq s\leq 2_{m,d}^*$ yields the existence of a positive constant $C_1>0$ such that
\begin{equation}\label{Inequality:SobolevEmbedding}
\Vert u\Vert_s \leq C_1\Vert u\Vert_{\textbf{a}},\qquad \text{for }s=2,\ q\ \text{ and }\ r.
\end{equation}
Let $\varepsilon>0$ be such that
\begin{equation}\label{Definition:Epsilon}
1-C_1^2\varepsilon>0. 
\end{equation}
With this choice of $\varepsilon$, by condition \eqref{Crescimentoforigem}, there exists $C_\varepsilon>0$ such that
\[
\frac{\varepsilon}{2}\vert tu\vert^2 + \frac{C_\varepsilon}{q}\vert tu\vert^q\geq -F_q(tu)\geq -\frac{\varepsilon}{2}\vert tu\vert^2 - \frac{C_\varepsilon}{q}\vert tu\vert^q,\qquad \forall\;t>0.
\]
Using \eqref{Inequality:SobolevEmbedding}, for any $t>0$, we then get that
\begin{align}\label{Inequality:EstimateIntegralF1}
-\int_M F_q(tu) \;dV_g - \frac{1}{r}\int_M \vert tu\vert^r\; dV_g 
&\geq -\frac{\varepsilon}{2}\int_M \vert tu\vert^2\;dV_g - \frac{C_\varepsilon}{q}\int_M\vert tu\vert^q\; dV_g - \frac{1}{r}\int_M \vert tu\vert^r\; dV_g \nonumber \\
&\geq -\frac{\varepsilon C_1^2}{2}\Vert tu\Vert_{\textbf{a}}^2- \frac{C_1^qC_\varepsilon}{q}\Vert tu\Vert_{\textbf{a}}^q - \frac{C_1^r}{r}\Vert tu\Vert_{\textbf{a}}^r \nonumber\\
&=-\frac{\varepsilon C_1^2}{2}t^2\Vert u\Vert_{\textbf{a}}^2- \frac{C_1^qC_\varepsilon}{q}t^q\Vert u\Vert_{\textbf{a}}^q - \frac{C_1^r}{r}t^r\Vert u\Vert_{\textbf{a}}^r,
\end{align}

and also

\begin{align}\label{Inequality:EstimateIntegralF2}
-\int_M F_q(tu) \;dV_g - \frac{1}{r}\int_M \vert tu\vert^r\; dV_g  
&\leq \frac{\varepsilon}{2}\int_M \vert tu\vert^2 \;dV_g + \frac{C_\varepsilon}{q}\int_M \vert tu\vert^q \; dV_g - \frac{1}{r}\int_M \vert tu\vert^r\; dV_g \nonumber  \\
&\leq t^2\frac{C_1^2\varepsilon}{2}\Vert u\Vert_{\textbf{a}}^2 + t^q\frac{C_1^qC_\varepsilon}{q} \Vert u\Vert_{\textbf{a}}^q- \frac{t^r}{r}\int_M \vert u\vert^r\; dV_g 
\end{align}

Therefore, on the one hand, inequality \eqref{Inequality:EstimateIntegralF1} yields for every $t>0$ that
\begin{align*}
I_u(t) &= I(tu) = \frac{1}{2}\Vert tu\Vert_{\textbf{a}}^2 \pm \frac{1}{p}\int_M \vert\nabla (tu)\vert_g^p \;dV_g - \int_M F_q(tu) \;dV_g - \frac{1}{r}\int_M \vert tu\vert^r \; dV_g\\
&\geq \frac{t^2}{2} \Vert u\Vert_{\textbf{a}}^2 \pm \frac{t^p}{p}\int_M \vert\nabla u\vert_g^p \;dV_g  -\frac{\varepsilon C_1^2}{2}t^2\Vert u\Vert_{\textbf{a}}^2- \frac{C_1^qC_\varepsilon}{q}t^q\Vert u\Vert_{\textbf{a}}^q - \frac{C_1^r}{r}t^r\Vert u\Vert_{\textbf{a}}^r\\
&= t^2\left(\frac{1-C_1^2\varepsilon}{2}\right)\Vert u\Vert_{\textbf{a}}^2 \pm \frac{t^p}{p}\int_M \vert\nabla u\vert_g^p \;dV_g  - \frac{C_1^qC_\varepsilon}{q}t^q\Vert u\Vert_{\textbf{a}}^q - \frac{C_1^r}{r}t^r\Vert u\Vert_{\textbf{a}}^r\\
&= t^2\left[  \left(\frac{1-C_1^2\varepsilon}{2}\right)\Vert u\Vert_{\textbf{a}}^2 \pm \frac{t^{p-2}}{p}\int_M \vert\nabla u\vert_g^p \;dV_g  - \frac{C_1^qC_\varepsilon}{q}t^{q-2}\Vert u\Vert_{\textbf{a}}^q - \frac{C_1^r}{r}t^{r-2}\Vert u\Vert_{\textbf{a}}^r \right]
\end{align*}
As $2<p<q<r$, as $\Vert u\Vert_{\textbf{a}}\neq 0$, and since $1-C_1^2\varepsilon>0$  by \eqref{Definition:Epsilon}, the term in the brackets 
\[
\left(\frac{1-C_1^2\varepsilon}{2}\right)\Vert u\Vert_{\textbf{a}}^2 \pm \frac{t^{p-2}}{p}\int_M \vert\nabla u\vert_g^p \;dV_g  - \frac{C_1^qC_\varepsilon}{q}t^{q-2}\Vert u\Vert_{\textbf{a}}^q - \frac{C_1^r}{r}t^{r-2}\Vert u\Vert_{\textbf{a}}^r\quad \longrightarrow \quad\left(\frac{1-C_1^2\varepsilon}{2}\right)\Vert u\Vert_{\textbf{a}}^2>0
\]
as $t\to 0$. This implies the existence of $t_1>0$ small enough such that
\[
I(tu)>0\qquad\text{ for any }0<t\leq t_1.
\]

On the other hand, \eqref{Inequality:EstimateIntegralF2} yields
\begin{align}\label{Equation:AboveEstimateEnergyNehari}
I_u(t)&=\frac{1}{2}\|tu \|_\textbf{a}^{2}   \pm \frac{1}{p}\int_{M}| \nabla (tu) |_g^p \, dV_g -\int_{M} F_q(tu)  \, dV_g -\frac{1}{r}\int_{M}|tu| ^{r} \, dV_g \nonumber\\
&\leq \frac{t^2}{2}\|u \|_\textbf{a}^{2}   \pm \frac{t^p}{p}\int_{M}| \nabla u |_g^p \, dV_g + t^2\frac{C_1^2\varepsilon}{2}\Vert u\Vert_{\textbf{a}}^2 + t^q\frac{C_1^qC_\varepsilon}{q} \Vert u\Vert_{\textbf{a}}^q- \frac{t^r}{r}\int_M \vert u\vert^r\; dV_g \nonumber\\
&= t^r\left[ \frac{1}{t^{r-2}}\|u \|_\textbf{a}^{2}\left(\frac{1+C_1^2\varepsilon}{2}\right)   \pm \frac{1}{t^{r-p}}\frac{1}{p}\int_{M}| \nabla u |_g^p \, dV_g +  \frac{1}{t^{r-q}}\frac{C_1^qC_\varepsilon}{q} \Vert u\Vert_{\textbf{a}}^q- \frac{1}{r}\int_M \vert u\vert^r\; dV_g \right]
\end{align}
As $2<p<q<r$ and as $u\neq0$, then $-\int_M \vert u\vert^r\; dV_g<0$ and the term in the bracket
\[
\frac{1}{t^{r-2}}\|u \|_\textbf{a}^{2}\left(\frac{1+C_1^2\varepsilon}{2}\right)   \pm \frac{1}{t^{r-p}}\frac{1}{p}\int_{M}| \nabla u |_g^p \, dV_g +  \frac{1}{t^{r-q}}\frac{C_1^qC_\varepsilon}{q} \Vert u\Vert_{\textbf{a}}^q- \frac{1}{r}\int_M \vert u\vert^r\; dV_g\quad\longrightarrow\quad - \frac{1}{r}\int_M \vert u\vert^r\; dV_g <0,
\]
as $t\to\infty$, implying that
\[
I_u(tu)\to-\infty,\quad\text{ as }t\to\infty, 
\]
and this proves Step 1.

\medskip
\textbf{Step 2.} \emph{Uniqueness of $t\in(0,\infty)$.}
\smallskip

To show the  uniqueness of $t$, let us  suppose, by contradiction,  that there exist $t \neq s$  with $0<t <s $ satisfying  $tu,su\in\mathcal{N}$. Then, on the one hand, if $u(x)\neq 0$, then
\[
\frac{f_q(tu(x))tu(x)}{t^p} = \frac{f_q(tu(x))\vert u(x)\vert^p}{\vert tu(x) \vert^{p-2}(tu(x))}
\]
Hence, by the definition of $\mathcal{N}$ given in \eqref{DefinitionNehari}, for $tu$ we have that
\begin{align}\label{Equation:UniquenessProjection1}
&\frac{1}{t^{p-2}} \|u\|_{\textbf{a}}^{2}   \pm  \int_{M}|\nabla u|^{p}  \, dV_g =\frac{1}{t^p}\left[\Vert tu\Vert_{\textbf{a}}^2 \pm \int_{M}  |\nabla (tu) |^{p} \,  dV_g\right] = \int_{M} \frac{f_q(tu)t u}{t^p} \,  dV_g +\frac{1}{t^p}\int_{M}  |tu |^{r}   \,  dV_g \nonumber\\
&= \int_{\{u\neq 0\}} \frac{f_q(tu)t u}{t^p} \,  dV_g +t^{r-p}\int_{M}  |u |^{r}   \,  dV_g =  \int_{\{u\neq 0\}}\frac{f_q(tu)|u|^{p}}{|tu|^{p-2} (tu)} \,  dV_g +t^{r-p}\int_{M}  |u |^{r}   \,  dV_g
\end{align}
and analogously for $su$ we obtain
\begin{align}\label{Equation:UniquenessProjection2}
  \frac{1}{s^{p-2}} \|u\|_{\textbf{a}}^{2}   \pm  \int_{M}|\nabla u|^{p}  \, dV_g  = \int_{M} \frac{f_q(su)|u|^{p}}{|s u|^{p-2} (su)} \, dV_g +s^{r-p}   \int_{M}|u|^{r}\, dV_g
\end{align}
Subtracting \eqref{Equation:UniquenessProjection1} and \eqref{Equation:UniquenessProjection2}, we derive that
\begin{equation}\label{Equation:Uniqueness}
    \left(\frac{1}{t^{p-2}}-  \frac{1}{s^{p-2}}  \right)\|u\|_{\textbf{a}}^{2} =\int_{M} \left( \frac{f_q(tu)}{|t u|^{p-2} (tu)} -\frac{f_q(su)}{|s u|^{p-2} (su)}\right)|u|^{p} \, dV_g +\left( t^{r-p}-s^{r-p} \right) \int_{M}|u|^{r}\, dV_g
\end{equation}
 Since $u\neq 0$, $2<p$ and $0<t<s$, then $1/t^{p-2}- 1/s^{p-2}>0$ and the left-hand side of this identity is positive. However, we claim that the right-hand side of the identity is negative. To see this, first notice that $(t^{r-p}- s^{r-p} )<0$ because $p<r$ and $0<t<s$, and as $u\neq0$, the second integral on the right-hand side of identity \eqref{Equation:Uniqueness} is negative. Moreover, by \ref{f_4}, the map 
\[
t\mapsto \frac{f_q(t)}{\vert t\vert^{p-2}t}
\] 
is even, and by \ref{f_3}, it is increasing in $(0,\infty)$. Hence,
\[
\frac{f_q(tu)}{|t u|^{p-2} (tu)} -\frac{f_q(su)}{|s u|^{p-2} (su)}\leq 0 \qquad\text{ if }\  u>0
\]
and also
\[
\frac{f_q(tu)}{|t u|^{p-2} (tu)} -\frac{f_q(su)}{|s u|^{p-2} (su)}= \frac{f_q(-tu)}{|-t u|^{p-2} (-tu)} -\frac{f_q(-su)}{|-s u|^{p-2} (-su)}\leq 0 \qquad\text{ if }\   u<0
\]
because in this last case $0<t<s$ implies that $0<-tu<-su$. Thus, the first integral on the right-hand side of identity \eqref{Equation:Uniqueness} is negative. This proves the claim, obtaining a contradiction. Therefore, there exists a unique $t\in(0,\infty)$ such that $tu\in\mathcal{N}$, which we will denote $t_u$, concluding the proof of Step 2.

\medskip
\textbf{Step 3.} $t_u$ is the unique critical point of $I_u$ and $\max_{t\in(0,\infty)} I_u(t):=I(t_u)$.

\smallskip
This follows immediately from Steps 1 and 2. Indeed, we have already observed that $t\in(0,\infty)$ is a critical point of $I_u$ if and only if $tu\in\mathcal{N}$; thus, Step 2 actually shows the uniqueness of the critical point. Moreover, as $t_u\in(0,\infty)$ is the unique critical point of $I_u$ in $(0,\infty)$, and because  $I_u(0)=0$, $I_u(t)> 0$ for $0<t\leq t_1$ and $I_u(t)\rightarrow-\infty$ by Step 1, this implies that $I_u$ attains its maximum at $t_u$.
\end{proof}

\begin{remark}
We emphasize that \emph{ the projection onto $\mathcal{N}$,} 
\[
u\mapsto t_uu,\qquad u\neq 0,
\]
depends only on \ref{f_1}, \ref{f_2} and on inequality $2<p<q<r$. On the other hand, conditions \ref{f_3} and \ref{f_4} were essential for the uniqueness of the projection.
\end{remark}

Next, we prove that $\mathcal{N}$ is closed, bounded away from zero, and that $I$ is bounded from below on this set.

\begin{lemma}\label{Lemma:NehariClosed}
The following statements hold true.
\begin{enumerate} [label=(\roman*)]
\item\label{Item:BoundedBelow} There exists a constant $K_0>0$ such that 
\[
 \| u\|_{\textbf{a}}\geq K_0 \qquad \text{and}\qquad  I(u)\geq K_0,\quad \forall\; u\in\mathcal{N}.
\] 
\item \label{Item:NehariClosed} $\mathcal{N} \subset H_g^m(M)$ is closed.
\end{enumerate}
\end{lemma}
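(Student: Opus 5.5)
The plan is to get both parts of \ref{Item:BoundedBelow} from the Nehari identity \eqref{DefinitionNehari} combined with the growth estimates of Lemma \ref{Lemma:Properties-f}. Part \ref{Item:NehariClosed} should then follow from continuity of $u\mapsto I'(u)u$ together with the uniform lower bound on the norm.

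\textbf{Lower bound on the norm.} Let $u\in\mathcal{N}$. I treat the two signs in front of the $p$-Laplacian separately.
\begin{itemize}
\item With the ``$+$'' sign, the Nehari identity gives
\[
\Vert u\Vert_{\textbf{a}}^2\le \int_M f_q(u)u\,dV_g+\int_M|u|^r\,dV_g,
\]
since $\int_M|\nabla u|_g^p\,dV_g\ge 0$.
\item With the ``$-$'' sign, it gives
\[
\Vert u\Vert_{\textbf{a}}^2=\int_M f_q(u)u\,dV_g+\int_M|u|^r\,dV_g+\int_M|\nabla u|_g^p\,dV_g .
\]
Here I control the gradient term by Proposition \ref{Proposition:CriticalGN}, using $\Vert u\Vert_2\le C_1\Vert u\Vert_{\textbf{a}}$ from \eqref{Inequality:SobolevEmbedding}:
\[
\int_M|\nabla u|_g^p\,dV_g\le C^p\Vert u\Vert_{\textbf{a}}^{p\Upsilon}\Vert u\Vert_2^{p(1-\Upsilon)}\le C'\Vert u\Vert_{\textbf{a}}^p.
\]
This is valid because $p\le 2^*_{1,d}$ under \eqref{Equation:p_subcritical}.
\end{itemize}
In both cases, \eqref{crescimentoforigem} and \eqref{Inequality:SobolevEmbedding} bound the nonlinear terms:
\[
\int_M f_q(u)u\,dV_g\le \varepsilon C_1^2\Vert u\Vert_{\textbf{a}}^2+C_\varepsilon C_1^q\Vert u\Vert_{\textbf{a}}^q,\qquad \int_M|u|^r\,dV_g\le C_1^r\Vert u\Vert_{\textbf{a}}^r.
\]
Choosing $\varepsilon$ as in \eqref{Definition:Epsilon} and dividing by $\Vert u\Vert_{\textbf{a}}^2\neq0$, I obtain
\[
0<1-C_1^2\varepsilon\le C_\varepsilon C_1^q\Vert u\Vert_{\textbf{a}}^{q-2}+C_1^r\Vert u\Vert_{\textbf{a}}^{r-2}+C'\Vert u\Vert_{\textbf{a}}^{p-2}.
\]
All exponents $p-2,\ q-2,\ r-2$ are positive, so the right-hand side tends to $0$ as $\Vert u\Vert_{\textbf{a}}\to0$. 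Hence $\Vert u\Vert_{\textbf{a}}\ge \tilde K>0$ for some $\tilde K$ independent of $u$.

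\textbf{Lower bound on the energy.} I use the expression \eqref{Equation:FunctionalDefinedOnNehari} for $I$ on $\mathcal{N}$, which holds for either sign since the gradient term cancels there. By Lemma \ref{Lemma:Properties-f}\ref{Item:CrecimientoCombinado}, $\xi(u)=f_q(u)u-pF_q(u)\ge0$, and $r>p$, so
\[
I(u)\ge\frac{p-2}{2p}\Vert u\Vert_{\textbf{a}}^2\ge\frac{p-2}{2p}\tilde K^2 .
\]
Taking $K_0:=\min\{\tilde K,\frac{p-2}{2p}\tilde K^2\}$ gives \ref{Item:BoundedBelow}.

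\textbf{Closedness.} Let $u_n\in\mathcal{N}$ with $u_n\to u$ in $H_g^m(M)$. The map $v\mapsto I'(v)v$ is continuous because $I$ is $C^1$: indeed $|I'(u_n)u_n-I'(u)u|\le\Vert I'(u_n)-I'(u)\Vert\,\Vert u_n\Vert+\Vert I'(u)\Vert\,\Vert u_n-u\Vert\to0$. Therefore $I'(u)u=0$. Moreover $\Vert u\Vert_{\textbf{a}}=\lim\Vert u_n\Vert_{\textbf{a}}\ge K_0>0$, so $u\ne0$ and $u\in\mathcal{N}$.

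The only delicate point is the ``$-$'' sign case of the norm bound. There the gradient term must be absorbed, and this relies on Proposition \ref{Proposition:CriticalGN} being available for $p\le 2^*_{1,d}$ with a power $p>2$ of $\Vert u\Vert_{\textbf{a}}$.
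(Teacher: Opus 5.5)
Your proposal is correct and follows essentially the same route as the paper. You split by the sign of the $p$-Laplacian term, absorb the gradient term via Proposition \ref{Proposition:CriticalGN} in the unfavorable case, and bound the nonlinear terms with \eqref{crescimentoforigem} and the Sobolev inequality to get a uniform lower bound on $\Vert u\Vert_{\textbf{a}}$. You then use \eqref{Equation:FunctionalDefinedOnNehari} with $\xi\ge 0$ for the energy bound, and continuity of $u\mapsto I'(u)u$ for closedness. Your single inequality after dividing by $\Vert u\Vert_{\textbf{a}}^2$ is a tidier replacement for the paper's split into $\Vert u\Vert_{\textbf{a}}\le 1$ and $\Vert u\Vert_{\textbf{a}}\ge 1$. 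Your energy bound $\frac{p-2}{2p}\tilde K^2$ also keeps the square that the paper's displayed estimate drops.
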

\begin{proof}
To prove \ref{Item:BoundedBelow}, by definition of $\mathcal{N}$, for any $u\in\mathcal{N}$ we have that
\begin{align}\label{Equation:NehariBoundedCases}
  \|u\|^{2}_{\textbf{a}} 
 =  \mp \int_{M} |\nabla u|^{p} \,  dV_g  + \int_{M}f_q(u)u \,  dV_g   +  \int_{M}|u|^{r} \,  dV_g.
\end{align}
The proof is divided into two cases.

\textbf{Case 1}: If we consider the minus sign in the right-hand side of identity \eqref{Equation:NehariBoundedCases}, taking $C_1>0$ as in \eqref{Inequality:SobolevEmbedding} and $\varepsilon>0$ as in \eqref{Definition:Epsilon}, we have, from the Sobolev inequality \eqref{Inequality:SobolevEmbedding} and the property \eqref{crescimentoforigem} of $f_q$, that
\begin{align*}
 \|u\|^{2}_{\textbf{a}} 
& =  \mp \int_{M} |\nabla u|^{p} \,  dV_g  + \int_{M}f_q(u)u \,  dV_g   +  \int_{M}|u|^{r} \,  dV_g\leq \varepsilon\int_{M}|u|^{2} \,  dV_g + C_{\varepsilon} \int_{M}|u|^{q} \,  dV_g   +  \int_{M}|u|^{r} \,  dV_g\\
& \leq  \varepsilon C_1^2  \|u\|^{2}_{\textbf{a}}+ C_\varepsilon C_1^q  \|u\|^{q}_{\textbf{a}}+ C_1^r  \|u\|^{r}_{\textbf{a}},
\end{align*}
from which we obtain that
\begin{equation}\label{Equation:NehariBoundedCasesSubcases}
0<(1-\varepsilon C_1^2)\Vert u\Vert_{\textbf{a}} \leq C_\varepsilon C_1^q  \|u\|^{q}_{\textbf{a}}+ C_1^r  \|u\|^{r}_{\textbf{a}}
\end{equation}
If $\Vert u\Vert_{\textbf{a}}\geq 1$, as $2<q<r$, then
\[
(1-\varepsilon C_1^2)\Vert u\Vert_{\textbf{a}}^2 \leq (C_\varepsilon C_1^q + C_1^r)\Vert u\Vert_{\textbf{a}}^r, 
\]
and, therefore
\[
\Vert u\Vert_{\textbf{a}}\geq K_1^{\frac{1}{r-2}}>0, \qquad \text{where }\quad K_1:= \frac{1-\varepsilon C_1^2}{C_\varepsilon C_1^q + C_1^r} >0.
\]
While if $\Vert u\Vert_{\textbf{a}}\leq 1$, as $2<q<r$, we have that
\[
(1-\varepsilon C_1^2)\Vert u\Vert_{\textbf{a}}^2 \leq (C_\varepsilon C_1^q + C_1^r)\Vert u\Vert_{\textbf{a}}^q, 
\]
implying that
\[
\Vert u\Vert_{\textbf{a}}\geq K_1^{\frac{1}{q-2}}>0
\]
in this case. Hence, taking $\widetilde{K}_1:=\min\{K_1^{\frac{1}{q-2}},K_1^{\frac{1}{r-2}}\}$, we infer that $\Vert u\Vert_{\textbf{a}}\geq \widetilde{K}_1$ for every $u\in\mathcal{N}$ in Case 1.

\textbf{Case 2}: If we consider the plus sign in the right-hand side of identity \eqref{Equation:NehariBoundedCases}, Proposition \ref{Proposition:CriticalGN} imply the existence of a constant $C_2>0$ such that
\begin{equation}\label{Equation:EstimatesP-NormGradient}
\int_M\vert \nabla u\vert^p \;dV_g \leq C_2 \Vert u\Vert_{\textbf{a}}^{\Upsilon p}\Vert u\Vert_2^{p(1-\Upsilon)}\leq C_2 (\min_{M}a_0)^{-p(1-\Upsilon)/2} \Vert u\Vert_{\textbf{a}}^{\Upsilon p}\Vert u\Vert_{\textbf{a}}^{p(1-\Upsilon)}=C_2 (\min_{M}a_0)^{-p(1-\Upsilon)/2} \Vert u\Vert_{\textbf{a}}^p.
\end{equation}
Arguing as in Case 1, using \eqref{Equation:EstimatesP-NormGradient}, the Sobolev inequality \eqref{Inequality:SobolevEmbedding}, \eqref{crescimentoforigem} with the same $\varepsilon$ as  before, we get from \eqref{Equation:NehariBoundedCases} that
\begin{align*}
0<(1-\varepsilon C_1^2)\Vert u\Vert_{\textbf{a}}\leq C_\varepsilon C_1^q \Vert u\Vert_{\textbf{a}}^q + C_1^r \Vert u\Vert_{\textbf{a}}^r + C_2 (\min_{M}a_0)^{-p(1-\Upsilon)/2}\Vert u\Vert_{\textbf{a}}^p.
\end{align*}
As $2<p<q<r$, arguing as in Case 1, if 
\[
K_2:=\frac{1-\varepsilon C_1^2}{C_\varepsilon C_1^q + C_1^r+C_2 (\min_{M}a_0)^{-p(1-\Upsilon)/2}}>0,
\]
then 
\[
\Vert u\Vert_{\textbf{a}}\geq
\begin{cases}
K_2^{\frac{1}{p-2}} & \text{if} \quad \Vert u\Vert_{\textbf{a}}\leq 1\\
K_2^{\frac{1}{r-2}} & \text{if} \quad \Vert u\Vert_{\textbf{a}}\geq 1.
\end{cases}
\]
So, taking $\widetilde{K_2}:=\min\{ K_2^{\frac{1}{p-2}}, K_2^{\frac{1}{r-2}}\}$ finishes the proof of Case 2.

\smallskip

We now prove the second assertion of \ref{Item:BoundedBelow}, i.e., that the functional $I$ is bounded from below. Take $K_0':=\min\{\widetilde{K}_1,\widetilde{K}_2\}>0$. Then, from the preceding analysis, we deduce that
\begin{equation}\label{BoundBothCases}
\Vert u\Vert_{\textbf{a}}\geq K_0',\qquad\forall\;u\in\mathcal{N}
\end{equation}
independently of the sign of the $L^p$ term of the gradient in the definition of $I$. Hence,  since that $2<p<q<r$, by \eqref{crescente} in Lemma \ref{Lemma:Properties-f}, together with identity \eqref{Equation:FunctionalDefinedOnNehari} and inequality \eqref{BoundBothCases},  for any $u\in\mathcal{N}$ we have that

\begin{equation}\label{Equality:I-I'/p}
\begin{split}
 I(u)  
   &=   \left(\frac{p-2}{2p}\right)\|u\|_{\textbf{a}}^{2}+ \frac{1 }{p}  \int_{M}  (f_q(u)u-pF_q(u)) \, dV_g + \left( \frac{r-p }{rp}\right) \int_{M} |u|^{r} \, dV_g \\
   &\geq   \left(\frac{p-2}{2p}\right)\|u\|_{\textbf{a}}^{2}
   \geq \left(\frac{p-2}{2p}\right)K_0'.
   \end{split}
 \end{equation}
Therefore, to end the proof of item \ref{Item:BoundedBelow}, it suffices to take 
\[
K_0:=\min\left\{K_0',\left(\frac{p-2}{2p}\right)K_0'\right\}.
\]

\bigskip

Now we prove item \ref{Item:NehariClosed}. Define the functional $G: H_g^m(M) \to \mathbb{R}$ given by

 \[
 G(u) = I'(u)u = \|u\|_{\textbf{a}}^{2} \pm \int_{M} |\nabla u|^{p} \,  dV_g -  \int_{M}f_q(u)u \, dV_g-\int_{M} |u|^{r} \, dV_g,
 \]

Since $I$ is of class $C^1$, this functional is continuous.
In particular, $G^{-1}(\{0\})$ is closed in $H_g^m(M)$. Hence, as the norm $\Vert\cdot\Vert_{a}$ is also continuous, we have from item \ref{Item:BoundedBelow} and the definition of the Nehari set \eqref{DefinitionNehari} that
\[
\mathcal{N}:=\left\{u\in H_g^m(M)\;:\;u\neq 0 , I'(u)u=0\right\} = \Vert\cdot\Vert_{\textbf{a}}^{-1}\big([K_0,\infty)\big)\cap G^{-1}(\{0\}),
\]
proving that $\mathcal{N}$ is closed in $H_g^m(M)$.
\end{proof}

 As a consequence of item \ref{Item:BoundedBelow} in the previous lemma, we have that
\begin{align}\label{Definition:InfimumNehari}
   c_0= \inf_{\mathcal{N}}I>0.
\end{align}
\begin{definition}
A nontrivial weak solution $u$ to \eqref{Problem:Main} is a \emph{ground state} if $I(u)=c_0$
\end{definition}

For any $u\in H_g^m(M)\smallsetminus\{0\}$, let $t_u>0$ be the unique element in $(0,\infty)$ such that $t_uu\in\mathcal{N}$, as given in Lemma \ref{Lemma:NehariNonEmpty}. Then we can define the projection onto the Nehari set 
\[
\widehat{H}: H_g^m(M)\smallsetminus\{0\}\rightarrow\mathcal{N},\qquad  \widehat{H}(u):= t_u u
\]
We now show that $\widehat{H}$ induces a homeomorphism between the sphere in $H_g^m(M)$,
\[
S_1(0):=\{u\in H_g^m(M)\;:\; \Vert u\Vert_{\textbf{a}}=1\} 
\]
and the Nehari set $\mathcal{N}$. We first establish the following lemma. Recall that a subset $\mathcal{A}$ in a Banach space $E$ is  symmetric about the origin if $u\in\mathcal{A}$ if and only if $-u\in\mathcal{A}$, and that a map $\varphi:E\rightarrow E$ is odd if  $\varphi(-u)=-\varphi(u)$.

\begin{lemma} \label{Lemma:ProjectionOdd}
$\mathcal{N}$ is symmetric about the origin and the map $\widehat{H}$ is continuous and odd.
\end{lemma}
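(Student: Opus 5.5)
Symmetry of $\mathcal{N}$ and oddness of $\widehat{H}$ come from evenness of the functional; continuity needs a separate compactness-type argument.

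\textbf{Symmetry and oddness.} By \ref{f_4}, $f_q$ is odd and $F_q$ is even. Every term of
\[
G(u)=I'(u)u=\Vert u\Vert_{\textbf{a}}^2\pm\int_M|\nabla u|^p\,dV_g-\int_M f_q(u)u\,dV_g-\int_M|u|^r\,dV_g
\]
is then even in $u$, so $G(-u)=G(u)$. Hence $u\in\mathcal{N}$ if and only if $-u\in\mathcal{N}$. Likewise $I$ is even, so $I_{-u}(t)=I_u(t)$ for all $t\ge0$. By the uniqueness in Lemma \ref{Lemma:NehariNonEmpty} we get $t_{-u}=t_u$, and therefore $\widehat{H}(-u)=t_u(-u)=-\widehat{H}(u)$.

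\textbf{Continuity.} Since $u\mapsto u$ is continuous, it suffices to show that $u\mapsto t_u$ is continuous on $H_g^m(M)\smallsetminus\{0\}$. Let $u_n\to u\neq0$ and set $t_n:=t_{u_n}$.

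\emph{Step 1: $(t_n)$ is bounded above and away from $0$.} For the lower bound, Lemma \ref{Lemma:NehariClosed}\ref{Item:BoundedBelow} gives $t_n\Vert u_n\Vert_{\textbf{a}}\ge K_0$, and $\Vert u_n\Vert_{\textbf{a}}\to\Vert u\Vert_{\textbf{a}}>0$, so $t_n$ stays away from $0$.

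For the upper bound, suppose $t_n\to\infty$ along a subsequence. Divide the Nehari identity for $t_nu_n$ by $t_n^r$:
\[
t_n^{2-r}\Vert u_n\Vert_{\textbf{a}}^2\pm t_n^{p-r}\int_M|\nabla u_n|^p\,dV_g=\int_M\frac{f_q(t_nu_n)u_n}{t_n^{r-1}}\,dV_g+\int_M|u_n|^r\,dV_g .
\]
The left side tends to $0$ because $\Vert u_n\Vert_{\textbf{a}}$ is bounded and, by Proposition \ref{Proposition:CriticalGN}, so is $\int_M|\nabla u_n|^p\,dV_g$. On the right, Lemma \ref{Lemma:Properties-f}\ref{Item:CrecimientoCombinado} gives $f_q(t)t>0$ for $t\neq0$, so the first integral is nonnegative. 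The second integral tends to $\int_M|u|^r\,dV_g>0$ by the Sobolev embedding \eqref{Equation:SobolevEmbedding}. This is a contradiction. (Alternatively, one can use \eqref{Equation:AboveEstimateEnergyNehari} together with $I(t_nu_n)\ge K_0$.)

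\emph{Step 2: identification of limits.} Pass to a subsequence with $t_n\to t_0\in(0,\infty)$. Then $t_nu_n\to t_0u$ in $H_g^m(M)$. Since $\mathcal{N}$ is closed by Lemma \ref{Lemma:NehariClosed}\ref{Item:NehariClosed}, we get $t_0u\in\mathcal{N}$, and uniqueness forces $t_0=t_u$. Every subsequence therefore has a further subsequence converging to $t_u$, so $t_n\to t_u$.

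\textbf{Main obstacle.} The delicate point is the upper bound on $t_n$: it must hold uniformly along the sequence and for both signs of the $p$-Laplacian term. The plan above handles this by dividing by the top power $t_n^r$ and relying on the positivity of $f_q(t)t$ together with $\int_M|u|^r\,dV_g>0$. The pure-power term $|u|^r$ with $r>q>p$ is what makes this work in both the subcritical and the critical case.
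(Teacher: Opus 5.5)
Your proof follows the paper's almost step for step: symmetry and oddness via evenness plus uniqueness of $t_u$, the lower bound from $\Vert t_nu_n\Vert_{\textbf{a}}\ge K_0$, the upper bound by contradiction, then closedness of $\mathcal{N}$ and uniqueness to identify the limit. The exception is your main argument for the upper bound on $t_n$, which contains a false step. Lemma \ref{Lemma:Properties-f}\ref{Item:CrecimientoCombinado} gives $\xi(t)=tf_q(t)-pF_q(t)>0$. It does not give $tf_q(t)>0$, and that inequality does not follow from \ref{f_1}--\ref{f_4}. For example, take $f_q(t)=|t|^{q-2}t-\epsilon|t|^{s-2}t$ with $2<s<p<q$ and $\epsilon>0$. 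It satisfies all four hypotheses; in particular, for $t>0$, $f_q(t)/t^{p-1}=t^{q-p}-\epsilon t^{s-p}$ is increasing. Yet $tf_q(t)=|t|^q-\epsilon|t|^s<0$ for $0<|t|<\epsilon^{1/(q-s)}$. So you cannot claim that $\int_M f_q(t_nu_n)u_n\,t_n^{1-r}\,dV_g$ is nonnegative.

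The repair is immediate and needs no sign information. By \eqref{crescimentoforigem},
\[
\left|\int_M\frac{f_q(t_nu_n)u_n}{t_n^{r-1}}\,dV_g\right|\le \varepsilon\,t_n^{2-r}\Vert u_n\Vert_2^2+C_\varepsilon\,t_n^{q-r}\Vert u_n\Vert_q^q\longrightarrow 0,
\]
because $2<q<r$ and $(u_n)$ is bounded in $L^2_g(M)$ and $L^q_g(M)$. Hence the right side of your divided identity tends to $\int_M|u|^r\,dV_g>0$ while the left side tends to $0$, which is the contradiction you wanted.

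Your parenthetical alternative, \eqref{Equation:AboveEstimateEnergyNehari} combined with $I(t_nu_n)\ge K_0$, is exactly the paper's argument. It is valid as stated, since it uses only the two-sided bound \eqref{Crescimentoforigem} on $F_q$. With either fix your proof is complete. Your closing step ("every subsequence has a further subsequence converging to $t_u$") is a cleaner conclusion than the paper's.
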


\begin{proof}

To verify that $\mathcal{N}$ is symmetric about the origin, let $u\in\mathcal{N}$ and consider $-u\neq0$. Since $f_q$ is odd by assumption \ref{f_4}, we have that
\[
\begin{split}
\Vert - u\Vert_{\textbf{a}}^2 \pm \int_{M}  |\nabla(- u) |^{p} \,  dV_g & = \Vert  u\Vert_{\textbf{a}}^2 \pm \int_{M}  |\nabla u |^{p} \,  dV_g =\int_{M} f_q(u) u \,  dV_g +\int_{M}  |u |^{r}   \,  dV_g \\
&=   \int_{M} f_q(-u)(- u) \,  dV_g +\int_{M}  |-u |^{r}   \,  dV_g
\end{split}
\]
and $-u\in\mathcal{N}$ by \eqref{DefinitionNehari}.

From this, we also conclude that $\widehat{H}$ is odd. Indeed, if $u\in H_g^m(M)$, then $t_u u\in\mathcal{N}$ if and only if $-t_u u = t_u(-u)\in\mathcal{N}$, and by the uniqueness of the projection given in Lemma \ref{Lemma:NehariNonEmpty}, we infer that
\begin{equation}
t_{-u} = t_{u},\qquad\forall\; u\in H_g^m(M)\smallsetminus\{0\}.
\end{equation}
Therefore, for any $u\in H_g^m(M)\smallsetminus\{0\}$ 
\[
\widehat{H}(-u) = t_{-u}(-u)=-t_u u = -\widehat{H}(u).
\]

\medskip

To prove the continuity, it suffices to show that the map $u\mapsto t_u$ is continuous. Let $u\in H_g^m(M)\smallsetminus\{0\}$ and let $(u_n)$ be a sequence in $H_g^m(M)$ such that $u_n\to u$ in $H_g^m(M)$.  For convenience, denote $t_n:= t_{u_n}$. Then, passing to a subsequence if necessary, we can suppose that $\Vert u_n\Vert_{\textbf{a}}\leq \Vert u\Vert_{\textbf{a}}/ 2$. Hence, as $t_nu_n\in\mathcal{N}$, by item \ref{Item:BoundedBelow} in Lemma \ref{Lemma:NehariClosed}, we have that 
\begin{equation}\label{Equation:SequenceBoundedBelow}
t_n\geq \frac{K_0}{\Vert u_n\Vert_{\textbf{a}}}\geq \frac{2K_0}{\Vert u\Vert_{\textbf{a}}}>0,
\end{equation}
so, up to a subsequence, $(t_n)$ is bounded away from zero. 

Next, we see that the sequence $(t_n)$  is also bounded from above. Suppose, in order to get a contradiction, that the sequence is not bounded, so that $t_n\to\infty$ as $n\to\infty$ in a subsequence, which we denote the same. Since $u_n\to u$ in $H_g^m(M)$, then, by the Sobolev inequality \eqref{Inequality:SobolevEmbedding} and Proposition \ref{Proposition:CriticalGN}, we have that
\[
\begin{split}
\Vert u_n\Vert_{\textbf{a}}^s &= \Vert u\Vert_{\textbf{a}}^s + o(1), \text{ for }s=2,q,\\  \quad \int_M \vert u_n\vert^r \; dV_g &= \int_M \vert u\vert^r \; dV_g + o(1) \quad\text{ and}\\
\int_M \vert\nabla u_n\vert^p_g\; dV_g &= \int_M \vert\nabla u\vert^p_g\; dV_g + o(1),
\end{split}
\]
where $o(1)\to0$ as $n\to\infty$. Hence, as $2<p<q<r$, by estimate \eqref{Equation:AboveEstimateEnergyNehari}, we have that
\begin{align*}
I(t_nu_n)
&\leq t_n^r\left[ \frac{1}{t_n^{r-2}}\|u_n \|_\textbf{a}^{2}\left(\frac{1+C_1^2\varepsilon}{2}\right)   \pm \frac{1}{t_n^{r-p}}\frac{1}{p}\int_{M}| \nabla u_n |_g^p \, dV_g +  \frac{1}{t_n^{r-q}}\frac{C_1^qC_\varepsilon}{q} \Vert u_n\Vert_{\textbf{a}}^q- \frac{1}{r}\int_M \vert u_n\vert^r\; dV_g \right]\\
&= t_n^r \left[ \frac{C}{t_n^{r-2}} ( \|u  \|_\textbf{a}^{2} + o(1))   \pm \frac{C}{t_n^{r-p}}\left(\int_{M}| \nabla u |_g^p \, dV_g+ o(1)\right)  +  \frac{C}{t_n^{r-q}} (\Vert u\Vert_{\textbf{a}}^q + o(1))- \frac{1}{r}\left(\int_M \vert u\vert^r\; dV_g + o(1)\right) \right]\\
&= t_n^r \left[ - \frac{1}{r}\int_M \vert u\vert^r\; dV_g + o(1) \right]
\end{align*}
where $C>0$ denotes a positive constant, not necessarily the same one. As $t_n\to\infty$, this implies that $I(t_nu_n)<0$ for $n$ sufficiently large, which is a contradiction to the fact that $I(t_nu_n)>0$ stated in item \ref{Item:BoundedBelow} of Lemma \ref{Lemma:NehariClosed}.  Hence, the sequence $(t_n)$ is bounded in $\mathbb{R}$, say, by a positive constant $B$. Therefore, if $A:=\frac{2K_0}{\Vert u\Vert_{\textbf{a}}}$ is the constant given in \eqref{Equation:SequenceBoundedBelow}, we have that $t_n\in [A,B]\subset(0,\infty)$ for every $n\in\mathbb{N}$. Consequently, up to another subsequence, which we also denote the same, there exists $t_\ast\in[A,B]$ such that $t_n\to t_\ast$ as $n\to\infty$ and $t_nu_n\to t_\ast u$ in $H_g^m(M)$. Since $\mathcal{N}$ is closed by Lemma \ref{Lemma:NehariClosed} and since $t_nu_n\in\mathcal{N}$, we conclude that $t_\ast u\in\mathcal{N}$. But $t_u>0$ is the unique element in $(0,\infty)$ such that $t_uu\in\mathcal{N}$, concluding that $t_\ast=t_u$. Therefore, $t_n:=t_{u_n}\to t_u$ in a subsequence. As the sequence $(u_n)$ was arbitrary, we infer that the map $u\mapsto t_u$ is continuous, as we wanted to show.
\end{proof}

\begin{corollary}\label{Corollary:Homeomorphism}
The map
\[
H:=\widehat{H}\big\vert_{S_1(0)}:S_1(0)\rightarrow\mathcal{N}
\]
is an odd homeomorphism with odd inverse $H^{-1}(u):=\frac{u}{\Vert u\Vert_{\textbf{a}}}$.
\end{corollary}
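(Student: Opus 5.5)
The map $H$ is the restriction of $\widehat{H}$ to $S_1(0)$. By Lemma \ref{Lemma:ProjectionOdd}, $\widehat{H}$ is continuous and odd on $H_g^m(M)\smallsetminus\{0\}$. Hence $H$ is continuous and odd, and it takes values in $\mathcal{N}$ by Lemma \ref{Lemma:NehariNonEmpty}.

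The natural candidate for the inverse is
\[
J:\mathcal{N}\to S_1(0),\qquad J(u):=\frac{u}{\Vert u\Vert_{\textbf{a}}}.
\]
This map is well defined because every $u\in\mathcal{N}$ satisfies $\Vert u\Vert_{\textbf{a}}\geq K_0>0$ by Lemma \ref{Lemma:NehariClosed}\ref{Item:BoundedBelow}. It is continuous, since it is the quotient of the identity by a continuous norm that stays away from zero. It is odd because $\Vert -u\Vert_{\textbf{a}}=\Vert u\Vert_{\textbf{a}}$, and $\mathcal{N}$ is symmetric by Lemma \ref{Lemma:ProjectionOdd}, so $J$ is defined on $-u$ whenever it is defined on $u$.

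It remains to check that $J$ and $H$ are mutually inverse.

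For $w\in S_1(0)$, we have $H(w)=t_w w$ with $t_w>0$. Then
\[
J(H(w))=\frac{t_w w}{t_w\Vert w\Vert_{\textbf{a}}}=w,
\]
using $\Vert w\Vert_{\textbf{a}}=1$.

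For $u\in\mathcal{N}$, set $w:=u/\Vert u\Vert_{\textbf{a}}\in S_1(0)$. Then $\Vert u\Vert_{\textbf{a}}\, w=u\in\mathcal{N}$ with $\Vert u\Vert_{\textbf{a}}>0$. The uniqueness part of Lemma \ref{Lemma:NehariNonEmpty} (Step 2) then forces $t_w=\Vert u\Vert_{\textbf{a}}$. Consequently $H(J(u))=t_w w=u$.

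So $H$ is a continuous bijection with continuous inverse $J$, that is, a homeomorphism, and both $H$ and $H^{-1}=J$ are odd.

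The only nontrivial point is the identity $H(J(u))=u$. It rests entirely on the uniqueness of the Nehari projection along rays, which is already established in Lemma \ref{Lemma:NehariNonEmpty}. The continuity of $H$ is the other substantial ingredient, and it is already given by Lemma \ref{Lemma:ProjectionOdd}. The rest of the argument is routine.
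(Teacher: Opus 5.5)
Your proof is correct and follows essentially the same route as the paper. In both, $H$ inherits continuity and oddness from Lemma \ref{Lemma:ProjectionOdd}, the inverse $u\mapsto u/\Vert u\Vert_{\textbf{a}}$ is well defined and continuous because $\Vert u\Vert_{\textbf{a}}\geq K_0>0$ on $\mathcal{N}$, and the identity $H\circ H^{-1}=\mathrm{Id}$ on $\mathcal{N}$ follows from the uniqueness of the Nehari projection along rays.
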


\begin{proof}
It is clear that the sphere $S_1(0)$ is symmetric about the origin. By the previous lemma, the map $H$ is odd and continuous. Moreover, since $\mathcal{N}$ is bounded away from zero by Lemma \ref{Lemma:NehariClosed}, it is readily seen that the map $H^{-1}$ is also odd and continuous. Moreover, on the one hand, if $u\in S_1(0)$, we have that
\[
H^{-1}\circ H(u)= H^{-1}(t_uu)= \frac{t_uu}{\Vert t_u u \Vert_{\textbf{a}}} = \frac{u}{\Vert u\Vert_{\textbf{a}}} = u.
\]
On the other hand, if $u\in\mathcal{N}$, then $t_u=1$. Observe that $u=\Vert u\Vert_{\textbf{a}}\frac{u}{\Vert u\Vert_{\textbf{a}}}$; hence, the unique $t\in (0,\infty)$ such that $t\frac{u}{\Vert u\Vert_{\textbf{a}}}\in\mathcal{N}$ is $t=\Vert u\Vert_{\textbf{a}}.$ Therefore
\[
H\circ H^{-1}(u)= H\left( \frac{u}{\Vert u\Vert_{\textbf{a}}} \right) = \Vert u\Vert_{\textbf{a}}\frac{u}{\Vert u\Vert_{\textbf{a}}} = u.
\]
This completes the proof.
\end{proof}

We end this section with a useful convergence result.

\begin{lemma}
\label{Lemma:ConvergenceFuntion}
Let $(u_n)$ be a bounded sequence in $H_g^m(M)$ and $u_0\in H_g^m(M)$ be such that
\[
u_n\to u_0 \qquad\text{ in }\ L_g^s(M),\  s=2,q.
\]
Then, 
\begin{align*}
    \lim_{n \to + \infty} \int_{M}f_q(u_{n})u_{n}\, dV_g =  \int_{M}f_q(u)u \, dV_g 
\end{align*}
in a subsequence.
\end{lemma}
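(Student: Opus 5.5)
The plan is a dominated-convergence argument. Its key input is the growth bound \eqref{crescimentoforigem} from Lemma \ref{Lemma:Properties-f}\ref{Item:Crecimiento}, paired with the partial converse of dominated convergence in $L^s$. (The $u$ on the right-hand side of the statement is to be read as $u_0$.)

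First, since $u_n\to u_0$ in $L_g^2(M)$ and in $L_g^q(M)$, we pass to a subsequence. Along it, $u_n(x)\to u_0(x)$ for almost every $x\in M$. Moreover, by the standard Riesz--Fischer / Brezis argument, there exist dominating functions $h_2\in L_g^2(M)$ and $h_q\in L_g^q(M)$ with
\[
|u_n|\leq h_2\quad\text{and}\quad |u_n|\leq h_q\qquad\text{a.e. on } M,\ \forall\, n.
\]
These are built by choosing the subsequence so that $\Vert u_{n_{k+1}}-u_{n_k}\Vert_s\leq 2^{-k}$ for both $s=2$ and $s=q$, and setting $h_s:=|u_{n_1}|+\sum_k|u_{n_{k+1}}-u_{n_k}|$.

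Second, since $f_q$ is continuous, we have $f_q(u_n)u_n\to f_q(u_0)u_0$ almost everywhere. Fix any $\varepsilon>0$, for instance $\varepsilon=1$, and apply \eqref{crescimentoforigem}:
\[
|f_q(u_n)u_n|\leq \varepsilon |u_n|^2+C_\varepsilon |u_n|^q\leq \varepsilon h_2^2+C_\varepsilon h_q^q\in L_g^1(M),
\]
where integrability comes from the compactness of $M$ (finite volume is not even needed here). Lebesgue's dominated convergence theorem then gives the claimed limit along the subsequence. The boundedness of $(u_n)$ in $H_g^m(M)$ is not really needed, beyond guaranteeing (via \eqref{Equation:SobolevEmbedding}) that all the integrals are finite.

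An alternative route avoids extracting a dominating function. Use the splitting
\[
f_q(u_n)u_n-f_q(u_0)u_0=f_q(u_n)(u_n-u_0)+(f_q(u_n)-f_q(u_0))u_0 .
\]
Bound the first term by Hölder, using that $f_q(u_n)$ is bounded in $L^{q'}+L^2$. Treat the second term by Vitali's theorem, since $|f_q(u_n)|^{q'}$ is uniformly integrable. The main (though mild) obstacle is exactly this integrability: guaranteeing a uniform $L^1$ control of $f_q(u_n)u_n$ with only $L^2$ and $L^q$ convergence. The dominating-function construction resolves it cleanly, and this is the step I would write carefully.
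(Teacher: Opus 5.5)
Your proposal is correct and follows the paper's proof: pass to a subsequence with a.e.\ convergence and a dominating function, bound $|f_q(u_n)u_n|\le \varepsilon h^2+C_\varepsilon h^q$ via \eqref{crescimentoforigem}, and conclude by dominated convergence. The paper uses a single $h\in L_g^2(M)\cap L_g^q(M)$, which is exactly what your construction gives, since your $h_s$ does not actually depend on $s$. You are also right that the right-hand side should read $u_0$ and that boundedness in $H_g^m(M)$ plays no role.
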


\begin{proof}
First, observe that that $L_g^q(M)\hookrightarrow L_g^2(M)$ because $M$ is compact. Hence, as $u_n\to u_0$ in $L_g^q(M)$ and in $L_g^2(M)$, then, passing to a subsequence, we have that $u_n(x)\to u_0(x)$ a.e. in $M$ and there exists a function $h\in L_g^q(M)\cap L_g^2(M)$ such that $\vert u(x)\vert \leq h(x)$ a.e. in $M$. Then, for a fixed $\varepsilon>0$, identity \eqref{crescimentoforigem} yields that, 
\[
\vert f_q(u_n(x))u_n(x)\vert \leq \varepsilon \vert u_n(x)\vert^2 + C_\varepsilon\vert u_n(x)\vert^q\leq \varepsilon h^2(x) + C_\varepsilon h^q(x) \quad \text{a.e. in }M.
\]
As $\varepsilon h^2+C_\varepsilon h^q\in L^1_g(M)$ and since by continuity of $f_q$ we have that
\[
f_q(u_n(x)) u_n(x)\to f_q(u(x)) u(x)\qquad\text{ a.e. in }M, 
\]
by the Lebesgue's Dominated Convergence Theorem, we deduce that
\[
    \lim_{n \to + \infty} \int_{M}f_q(u_{n})u_{n}\, dV_g =  \int_{M}f_q(u)u \, dV_g 
\]
and the proof follows.
\end{proof}


\section{Variational setting with symmetries} \label{Section:SymmetricSetting}

Throughout this section, we fix a closed subgroup $\Gamma$ of $\text{Isom}(M,g)$ satisfying \eqref{ActionNotTransitive}, and assume that the function $a_0\in C^\infty(M)$, appearing in the definition of the operator $P_{\textbf{a}}^m$ \eqref{Def:Operator}, is $\Gamma$-invariant. The group $\Gamma$  acts on $H_{g}^{m}(M)$ in the usual way:
\begin{align*}
    \Gamma \times H_{g}^{m}(M) \to H_{g}^{m}(M), \quad (\gamma, u)\mapsto \gamma u:=u \circ \gamma^{-1}.
\end{align*}

The space of fixed points under this action coincides with the subspace of $\Gamma$-invariant functions 
\[
H_{g}^{m}(M)^{\Gamma}:=  \left\lbrace u \in H^{m}_{g}(M):\,\, u \,\, \mbox{is}\,\,\Gamma-\mbox{invariant}  \right\rbrace,
\]
which is closed in $H_g^m(M)$. 
Moreover, if $C^{\infty}(M)^{\Gamma}$ denotes the space of smooth $\Gamma$-invariant functions, $H_{g}^{m}(M)^{\Gamma}$ coincides with the closure of this space under the Sobolev norm $\Vert \cdot\Vert_{H_g^m(M)}$. As $M$ is compact, it follows from the Myers-Steenrod Theorem that $\text{Isom}(M,g)$ is compact (see, for instance, Section II.1 in \cite{KobayashiBook}). Since $\Gamma$ is closed in $\text{Isom}(M,g)$, $\Gamma$ is compact and admits a Haar measure. Together with  \eqref{ActionNotTransitive}, this implies the existence of $\Gamma$-invariant partitions of unity (see \cite{Palais1961}), and hence, the spaces $C^{\infty}(M)^{\Gamma}$ and $H_{g}^{m}(M)^{\Gamma}$ are infinite-dimensional. Observe that we can recover the non-symmetric setting by taking $\Gamma=\{Id\}$, because, in this case, $H_g^m(M)=H_g^m(M)^{\{Id\}}$.  

For any $\gamma\in\text{Isom}(M,g)$, the action of $\Gamma$ on $H_g^m(M)$ induces linear maps
\begin{align*}
\gamma:  H_{g}^{m}(M) \to H_{g}^{m}(M), \quad u \to u \circ \gamma^{-1}.
\end{align*}
Clearly, each map is a linear isomorphism. We verify that they are isometric isomorphisms, endowing $H_g^m(M)$ with the norm $\Vert\cdot \Vert_{\textbf{a}}$.

\begin{lemma}\label{Lemma:P_gCommutes}
 For every  $\gamma\in \text{Isom}(M,g)$ and every $u \in C^{\infty}(M)$,
 \begin{align}\label{Equation:P_gCommutes}
     P_{\textbf{a}}^m(u \circ \gamma)=(P_{\textbf{a}}^mu)\circ \gamma
 \end{align}
 In particular, if $a_0\in C^\infty(M)$ is positive and $\Gamma$-invariant, then
 \begin{align*}
\gamma:  (H_{g}^{m}(M),\Vert\cdot\Vert_{\textbf{a}}) \to (H_{g}^{m}(M),\Vert\cdot\Vert_{\textbf{a}})
\end{align*}
is a linear isometry, i.e.,
\[
\langle u\circ\gamma,v\circ\gamma\rangle_{\textbf{a}}= \langle u,v\rangle_{\textbf{a}},\qquad\forall\;u,v\in H_g^m(M)\text{ and }\forall\; \gamma\in\Gamma.
\]
\end{lemma}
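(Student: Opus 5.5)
The approach is to establish \eqref{Equation:P_gCommutes} first for each building block $(-\Delta_g)^i$, and then deduce the isometry property by integrating and using invariance of the Riemannian volume. The basic fact is that isometries commute with the Laplace--Beltrami operator: if $\gamma$ is an isometry, then $\gamma^*g=g$. By naturality of the Levi-Civita connection and of the musical isomorphisms, this gives $\nabla(u\circ\gamma)=\gamma^*(\nabla u)$ and $\operatorname{div}_g(\gamma^*X)=(\operatorname{div}_gX)\circ\gamma$. Hence $\Delta_g(u\circ\gamma)=(\Delta_g u)\circ\gamma$ for all $u\in C^\infty(M)$. In local coordinates this is simply the invariance of $\frac{1}{\sqrt{|g|}}\partial_i(\sqrt{|g|}g^{ij}\partial_j u)$ under a change of variables that preserves $g$. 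An induction on $i$ then gives $(-\Delta_g)^i(u\circ\gamma)=((-\Delta_g)^iu)\circ\gamma$ for every $i\geq 0$. Since $a_1,\dots,a_m$ are constants, summing yields
\[
\sum_{i=1}^m a_i(-\Delta_g)^i(u\circ\gamma)=\Big(\sum_{i=1}^m a_i(-\Delta_g)^iu\Big)\circ\gamma .
\]
For the zeroth-order term we need $a_0\,(u\circ\gamma)=(a_0u)\circ\gamma$, which holds exactly when $a_0\circ\gamma=a_0$. So, as stated, the first identity should be read for $\gamma$ in $\Gamma$ (or for all $\gamma$ when $a_0$ is constant), using the $\Gamma$-invariance of $a_0$ assumed in \eqref{Hypothesis:Coefficients}. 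I would point out this small precision explicitly.

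For the isometry claim, I would first take $u,v\in C^\infty(M)$ and $\gamma\in\Gamma$. By \eqref{Equation:InteriorProductOperator} and the first part,
\[
\langle u\circ\gamma,v\circ\gamma\rangle_{\textbf{a}}=\int_M (v\circ\gamma)\,(P_{\textbf{a}}^mu)\circ\gamma\;dV_g=\int_M \big(vP_{\textbf{a}}^mu\big)\circ\gamma\;dV_g=\int_M vP_{\textbf{a}}^mu\;dV_g=\langle u,v\rangle_{\textbf{a}}.
\]
Here the change of variables uses that $\gamma$ is a diffeomorphism with $\gamma^*dV_g=dV_g$, since the volume form is determined by the metric.

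Next, the identity extends from smooth functions to all of $H_g^m(M)$. The map $u\mapsto u\circ\gamma$ preserves $\|\cdot\|_{\textbf{a}}$ on the dense subspace $C^\infty(M)$. By Proposition \ref{Proposition:EquivalentNorms} this norm is equivalent to $\|\cdot\|_{H_g^m(M)}$, so the map extends uniquely to a bounded linear operator on $H_g^m(M)$. This extension coincides with composition by $\gamma$, because $\gamma$ preserves $L^2_g$ convergence and hence the a.e. identification. The bilinear identity then passes to the limit by continuity of $\langle\cdot,\cdot\rangle_{\textbf{a}}$. Linearity is obvious, and the map is bijective with inverse given by composition with $\gamma^{-1}\in\Gamma$, so it is a linear isometric isomorphism. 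Since $\gamma u=u\circ\gamma^{-1}$, this applies equally to the action map.

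The only genuinely geometric step, and the one I expect to need the most care, is the commutation $\Delta_g(u\circ\gamma)=(\Delta_gu)\circ\gamma$. I would justify it through the variational characterization: for every test function $\varphi$,
\[
\int_M\langle\nabla(u\circ\gamma),\nabla\varphi\rangle_g\,dV_g=\int_M\langle\nabla u,\nabla(\varphi\circ\gamma^{-1})\rangle_g\,dV_g .
\]
This identity follows from $|d\gamma\,\xi|_g=|\xi|_g$ and the invariance of $dV_g$. Integrating by parts on both sides then gives the claim directly, without computing Christoffel symbols.
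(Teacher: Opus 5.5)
Your proposal is correct and follows the same route as the paper. Both arguments use the commutation of $\Delta_g$, hence of each $(-\Delta_g)^i$, with isometries; the paper cites this from Hebey's Lemma 4.3.3, whereas you prove it via naturality or the weak formulation. Both then apply the change of variables $\gamma^*dV_g=dV_g$ to smooth functions and finish with a density argument.

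Your caveat is also right. The identity $P_{\textbf{a}}^m(u\circ\gamma)=(P_{\textbf{a}}^mu)\circ\gamma$ requires $a_0\circ\gamma=a_0$, so it holds for $\gamma\in\Gamma$ (or for every isometry when $a_0$ is constant), not for every $\gamma\in\text{Isom}(M,g)$ as the paper states. This restricted case is exactly what the paper's isometry argument actually uses.
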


\begin{proof}
For any $\gamma\in \text{Isom}(M,g)$ and any $u\in C^\infty(M)$, Lemma 4.3.3 in \cite{HebeyBook1997} yields that
\[
\Delta_g^j(u\circ\gamma) = (\Delta_g^j u)\circ\gamma,\qquad \forall \; j=0,1,\ldots,m. 
\]
This immediately implies \eqref{Equation:P_gCommutes}. 

Now, for any $\gamma\in\text{Isom}(M,g)$ and any $u,v\in C^{\infty}(M)$, using \eqref{Equation:P_gCommutes} together with the Change of Variables formula (see, for instance, \cite[Theorem 4.12]{HebeyBook1997}) and the fact that  $a_0$ is $\Gamma$-invariant, it follows that
\[
 \langle u\circ\gamma^{-1},  v\circ\gamma^{-1}\rangle_{\textbf{a}} = \int_M P_{\textbf{a}}^m(u\circ\gamma^{-1})v\circ\gamma^{-1}\; dV_g = \int_M [P_{\textbf{a}}^m(u)v]\circ\gamma^{-1} \; dV_g = \int_M P_{\textbf{a}}^m(u)v \; dV_g = \langle u, v\rangle_{\textbf{a}}.
\]
 As $C^\infty(M)$ is dense in $H_g^m(M)$, this yields that $\gamma$ extends uniquely to an isometry on $H_g^m(M)$.
\end{proof}

We say that a function $J:H_g^m(M)\to\mathbb{R}$ is $\Gamma$-invariant if  $J(\gamma u)=J(u)$ for every $\gamma\in\Gamma$ and every $u\in H_g^m(M)$. We next show that the energy functional $I$ is $\Gamma$-invariant.

\begin{lemma}\label{Lemma:FunctionalGammaInvariant}
If $a_0$ is $\Gamma$-invariant, then the functional $I$ is $\Gamma$-invariant.
\end{lemma}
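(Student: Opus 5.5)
The plan is to check $\Gamma$-invariance of each of the four terms of
\[
I(u)=\tfrac12\Vert u\Vert_{\textbf{a}}^2 \pm \tfrac1p\int_M|\nabla u|_g^p\,dV_g-\int_M F_q(u)\,dV_g-\tfrac1r\int_M|u|^r\,dV_g
\]
separately. Fix $\gamma\in\Gamma$ and $u\in H_g^m(M)$, and recall that $\gamma u=u\circ\gamma^{-1}$, where $\gamma^{-1}$ is again an isometry of $(M,g)$.

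\textbf{Quadratic term.} Lemma \ref{Lemma:P_gCommutes} already gives $\Vert u\circ\gamma^{-1}\Vert_{\textbf{a}}=\Vert u\Vert_{\textbf{a}}$ for all $u\in H_g^m(M)$; this is the only place where the $\Gamma$-invariance of $a_0$ is used.

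\textbf{Zeroth-order terms.} For $\int_M F_q(u)\,dV_g$ and $\int_M|u|^r\,dV_g$ I will use the change of variables formula for the isometry $\gamma^{-1}$ (as in \cite[Theorem 4.12]{HebeyBook1997}). Since $\gamma^{-1}$ preserves $dV_g$, we get $\int_M h\circ\gamma^{-1}\,dV_g=\int_M h\,dV_g$ for every $h\in L^1_g(M)$. Applying this to $h=F_q(u)$ and $h=|u|^r$ is legitimate: both are in $L^1_g(M)$ by the growth bound \eqref{Crescimentoforigem} and the Sobolev embedding \eqref{Equation:SobolevEmbedding}.

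\textbf{Gradient term.} This is the only step needing a little care. For smooth $u$, pointwise $\nabla(u\circ\gamma^{-1})(x)=d\gamma^{-1}_x{}^{*}\big(\nabla u(\gamma^{-1}x)\big)$, read through the metric. Because $d\gamma^{-1}_x$ is a linear isometry $T_xM\to T_{\gamma^{-1}x}M$, this gives $|\nabla(u\circ\gamma^{-1})|_g=|\nabla u|_g\circ\gamma^{-1}$. Change of variables then yields $\int_M|\nabla(\gamma u)|_g^p\,dV_g=\int_M|\nabla u|_g^p\,dV_g$.

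To pass from smooth $u$ to general $u\in H_g^m(M)$, take $u_n\in C^\infty(M)$ with $u_n\to u$ in $H_g^m(M)$. Then $\gamma u_n\to\gamma u$ in $H_g^m(M)$, since $\gamma$ is an isometry for $\Vert\cdot\Vert_{\textbf{a}}$, which is equivalent to the Sobolev norm by Proposition \ref{Proposition:EquivalentNorms}. Proposition \ref{Proposition:CriticalGN} makes $u\mapsto\int_M|\nabla u|_g^p\,dV_g$ continuous on $H_g^m(M)$, so the identity passes to the limit.

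Alternatively, each of the three integral functionals is continuous on $H_g^m(M)$, so it suffices to prove everything for smooth $u$ and then argue by density, using the continuity of $\gamma$ on $H_g^m(M)$.

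\textbf{Conclusion.} Adding the four invariant terms gives $I(\gamma u)=I(u)$ for every $\gamma\in\Gamma$ and $u\in H_g^m(M)$, with either choice of the sign $\pm$. I expect no real obstacle beyond the pointwise identity $|\nabla(u\circ\gamma^{-1})|_g=|\nabla u|_g\circ\gamma^{-1}$ and justifying the density argument.
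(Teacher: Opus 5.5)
Your proof is correct and follows essentially the same route as the paper. Both use the pointwise identity $|\nabla(u\circ\gamma^{-1})|_g=|\nabla u|_g\circ\gamma^{-1}$, the change of variables formula for isometries, Lemma \ref{Lemma:P_gCommutes} for the quadratic term, and density of $C^\infty(M)$; you are somewhat more careful than the paper in spelling out the continuity of $\gamma$ on $H_g^m(M)$ and of $u\mapsto\int_M|\nabla u|_g^p\,dV_g$ (via Proposition \ref{Proposition:CriticalGN}), which the paper's one-line density step uses only implicitly.
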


\begin{proof}
First, notice that for any $u\in C^\infty(M)$ and any $\gamma\in \text{Isom}(M,g)$
\[
\nabla(u\circ\gamma^{-1}) = d\gamma(\nabla u\circ\gamma^{-1})
\]
where $d\gamma$ denotes the differential of $\gamma$. Hence, as $\gamma$ is an isometry, this implies that
\[
\vert \nabla(u\circ\gamma^{-1})\vert_g =\left( \langle  d\gamma(\nabla u\circ\gamma^{-1}) ,  d\gamma(\nabla u\circ\gamma^{-1})\rangle_g \right)^{1/2}= \left( \langle \nabla u\circ\gamma^{-1} ,  \nabla u\circ\gamma^{-1}\rangle_g \right)^{1/2} = \vert \nabla u\vert_g\circ\gamma^{-1}.
\]

Therefore, by the Change of Variables formula and Lemma \ref{Lemma:P_gCommutes}, for any $u\in C^\infty(M)$ and any $\gamma\in\Gamma$ we conclude that
\begin{align*}
I(\gamma u)& = I(u\circ\gamma^{-1}) 
= \frac{1}{2}\Vert u\circ\gamma^{-1}\Vert_{\textbf{a}} \pm \frac{1}{p}\int_M \vert\nabla (u\circ\gamma^{-1})\vert_g^p \;dV_g- \int_M F_q(u\circ\gamma^{-1})\;dV_g - \frac{1}{r}\int_M \vert u\circ\gamma^{-1}\vert^r \; dV_g\\
&= \frac{1}{2}\Vert u\Vert_{\textbf{a}} \pm \frac{1}{p}\int_M \vert\nabla u\vert_g^p\circ\gamma^{-1} \;dV_g- \int_M F_q(u)\circ\gamma^{-1}\;dV_g - \frac{1}{r}\int_M \vert u\vert^r \circ\gamma^{-1}\; dV_g\\
&= \frac{1}{2}\Vert u\Vert_{\textbf{a}} \pm \frac{1}{p}\int_M \vert\nabla u\vert_g^p \;dV_g- \int_M F_q(u)\;dV_g - \frac{1}{r}\int_M \vert u\vert^r \; dV_g\\
&= I(u).
\end{align*}
Since $C^\infty(M)$ is dense in $H_g^m(M)$, we deduce that $I$ is $\Gamma$-invariant, as we wanted to show.
\end{proof}

As a consequence of the last result, by the Principle of Symmetric Criticality \cite{Palais1979}, $\Gamma$-invariant solutions to the problem \eqref{Problem:Main} correspond to the critical points of $I$ restricted to the space $H_g^m(M)^\Gamma$, and the nontrivial ones lie in the $\Gamma$-invariant Nehari set
\[
\mathcal{N}^\Gamma:=\mathcal{N}\cap H_g^m(M)^\Gamma.
\]

Thanks to Lemma \ref{Lemma:NehariClosed}, $\mathcal{N}^\Gamma$ is closed and bounded away from zero, and we can define
\begin{align}\label{Eq:InfimumNehariGammaInvariant}
c_{0}^{\Gamma}:=\inf_{u\in\mathcal{N}^\Gamma} I (u)>0
\end{align}

Notice that a $\Gamma$-invariant solution to \eqref{Problem:Main} attaining this value will have the least energy among all other $\Gamma$-invariant solutions. Also, remark that $c_0=c_0^{\{Id\}}$ and, in this case, a $\{Id\}$-invariant solution attaining this $c_0^{\{Id\}}$ is a ground state solution. 

\medskip

The next result is classical and states that symmetries restore compactness (see \cite{HebeyVaugon2} for the case $m=1$ and, for instance, \cite[Lemma 2.2]{FernandezPalmasTorres2024} for $m\geq 2$).

\begin{lemma}\label{Lemma:HebeyVaugon}
    If $\alpha:=\min \{ \dim\; \Gamma x :\,\, x \in M  \}$, then 
    \begin{align*}
        H_{g}^{m}(M)^{\Gamma} \hookrightarrow L_{g}^{r}(M)
    \end{align*}
    is continuous for every $2<r \leq 2^{\ast}_{m,d-\alpha}:= \frac{2(d-\alpha)}{(d-\alpha)-2m}$ and  compact if $2<r < 2^{\ast}_{m,d-\alpha}=\frac{2(d-\alpha)}{(d-\alpha)-2m}$.
\end{lemma}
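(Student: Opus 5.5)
The plan is the Hebey–Vaugon argument: reduce to orbit spaces and apply the Euclidean Sobolev embedding in dimension $d-\alpha$. Since $H_g^m(M)^\Gamma$ is the closure of $C^\infty(M)^\Gamma$, it suffices to prove a bound $\Vert u\Vert_r\le C\Vert u\Vert_{H_g^m(M)}$ for $u\in C^\infty(M)^\Gamma$ and extend by density.

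\textbf{Step 1: local reduction to dimension $d-\alpha$.} Fix $x\in M$. By the slice theorem for compact group actions, there are a $\Gamma$-invariant tubular neighbourhood $U_x$ of the orbit $\Gamma x$ and a Riemannian submersion-type fibration $U_x\to \Omega_x$. Here $\Omega_x$ is a domain of dimension $d-k_x\le d-\alpha$, with $k_x=\dim\Gamma x\ge\alpha$, modelled on a normal slice. Along this fibration a $\Gamma$-invariant function $u$ is constant on orbits, so $u=\tilde u\circ\pi$ with $\tilde u$ defined on $\Omega_x$. One then writes $\int_{U_x}|u|^r\,dV_g=\int_{\Omega_x}|\tilde u|^r\,v\,dy$, where $v(y)>0$ is the volume of the orbit through $y$.

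The delicate point is that $v$ may degenerate near singular orbits. To avoid this, choose $U_x$ small enough that all orbits in it have dimension $\ge k_x$, using lower semicontinuity of orbit dimension. Then use local coordinates adapted to $\Gamma x$ so that $v$ is comparable to a constant on a small neighbourhood of a point of the slice. Alternatively, follow Hebey–Vaugon directly and take charts $(y,z)\in\mathbb{R}^{d-k_x}\times\mathbb{R}^{k_x}$ in which $u$ depends only on $y$.

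\textbf{Step 2: estimates in the chart.} In such a chart the derivatives $\nabla^j u$ control $D^j_y\tilde u$ up to constants coming from the smooth metric coefficients, for $j\le m$. The Euclidean Sobolev embedding $H^m(\mathbb{R}^{d-k_x})\hookrightarrow L^{s}$ for $s\le 2^*_{m,d-k_x}$ then gives $\Vert u\Vert_{L^r(U_x)}\le C\Vert u\Vert_{H^m(U_x')}$ for every $r\le 2^*_{m,d-k_x}$. Since $2^*_{m,d-\alpha}\le 2^*_{m,d-k_x}$, this holds for every $r\le 2^*_{m,d-\alpha}$, using boundedness of the domain for smaller exponents.

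\textbf{Step 3: globalisation and compactness.} By compactness, cover $M$ by finitely many such $U_{x_i}$. Take a $\Gamma$-invariant partition of unity $\{\eta_i\}$ subordinate to the cover, built by averaging over the Haar measure, so that $\eta_i u$ remains $\Gamma$-invariant. Summing the local estimates gives continuity of the embedding. For compactness at $r<2^*_{m,d-\alpha}$, apply Rellich–Kondrachov in each chart: $H^m\hookrightarrow L^r$ is compact on the bounded $(d-k_x)$-dimensional domains below the critical exponent. Alternatively, interpolate between the compact $L^2$ embedding and the continuous $L^{2^*_{m,d-\alpha}}$ bound via Hölder's inequality.

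\textbf{Main obstacle.} The hardest step is the adapted-chart construction in Step 1: controlling the orbit-volume density and showing that invariant functions really reduce to functions of $d-k_x$ variables with comparable $H^m$ norms. I would first try to quote the construction from Hebey–Vaugon for $m=1$ and observe that the higher-derivative bounds follow because the charts are smooth.
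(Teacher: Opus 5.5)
Your plan is correct and follows essentially the argument the paper relies on. The paper gives no proof of this lemma; it cites Hebey--Vaugon for $m=1$ and Fern\'andez--Palmas--Torres (Lemma 2.2) for $m\ge 2$, and their proofs are exactly the adapted-chart reduction you describe: charts in which $\Gamma$-invariant functions depend only on $d-\dim\Gamma x$ variables, bounded metric coefficients to pass from $\nabla^j u$ to $D^j_y\tilde u$, the Euclidean Sobolev and Rellich--Kondrachov embeddings in those variables, and a finite cover.

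The one thing to tidy is Step 1. The tube $U_x$ fibres over the orbit $\Gamma/\Gamma_x$ with fibre the slice, not over a $(d-k_x)$-dimensional orbit space. Accordingly, the weight in $\int_{U_x}|u|^r\,dV_g=\int|\tilde u|^r\,v\,dy$ should be the smooth, positive Jacobian of the tube (or chart) map, not literally the volume of $\Gamma y$. The Hebey--Vaugon charts you name as the alternative provide exactly this.
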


Observe that, $2_{m,d-\alpha}^\ast<2_{m,d}^\ast$ when $\alpha\geq 1$. Hence, with this result, we have that the Sobolev embedding
\[
H_g^m(M)^\Gamma\hookrightarrow L_g^r(M) \text{ is compact}
\]
when $r<2_{m,d}^\ast$ is considered without any symmetry assumption or when $r=2_{m,d}^\ast$ and the condition \eqref{Hyp:Gamma} are satisfied. For $c\in\mathbb{R}$, we say that a sequence $(u_n)$ in $H_g^m(M)^\Gamma$ is a $(PS)_c^\Gamma$-sequence  for the functional $I$ if $I (u_n)\rightarrow c$ and $I '(u_n)\rightarrow 0$ in $(H_g^m(M))'$, where $(H_g^m(M))'$ is the dual space of $H_g^m(M)$.

\begin{lemma}\label{BS}
If $(u_{n})$ is a $(PS)_c^\Gamma$ sequence for $I$ with $u_n\in\mathcal{N}^{\Gamma}$ for every $n\in\mathbb{N}$, then it is bounded in  $H^{m}_{g}(M)^\Gamma$.
\end{lemma}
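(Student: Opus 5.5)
The key observation is that on $\mathcal{N}$ the energy already controls the norm, so the Palais--Smale structure is barely needed beyond $I(u_n)\to c$. Since each $u_n\in\mathcal{N}^\Gamma\subset\mathcal{N}$, identity \eqref{Equation:FunctionalDefinedOnNehari} applies. Equivalently, we may use $I(u_n)-\frac1p I'(u_n)u_n$ with $I'(u_n)u_n=0$, which cancels the $p$-Laplacian term \emph{regardless of the sign} $\pm$. This gives
\[
I(u_n)=\Big(\frac{p-2}{2p}\Big)\Vert u_n\Vert_{\textbf{a}}^2+\frac1p\int_M\big[f_q(u_n)u_n-pF_q(u_n)\big]\,dV_g+\Big(\frac{r-p}{rp}\Big)\int_M|u_n|^r\,dV_g .
\]

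By Lemma \ref{Lemma:Properties-f}\ref{Item:CrecimientoCombinado}, the integrand $\xi(u_n)=u_nf_q(u_n)-pF_q(u_n)$ is nonnegative pointwise. Since $r>p$, the last term is nonnegative as well. Hence
\[
\Big(\frac{p-2}{2p}\Big)\Vert u_n\Vert_{\textbf{a}}^2\le I(u_n).
\]

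Because $I(u_n)\to c$, the sequence $(I(u_n))$ is bounded, say $I(u_n)\le c+1$ for all $n$ large. Together with $p>2$, this yields $\Vert u_n\Vert_{\textbf{a}}^2\le \frac{2p}{p-2}(c+1)$ for large $n$. The finitely many remaining terms are trivially bounded.

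Boundedness in $\Vert\cdot\Vert_{\textbf{a}}$ then transfers to the usual norm of $H_g^m(M)$ by Proposition \ref{Proposition:EquivalentNorms}. Since $H_g^m(M)^\Gamma$ is a closed subspace carrying the restricted norm, the sequence is bounded in $H_g^m(M)^\Gamma$.

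There is essentially no obstacle. The only point to check is that the sign of the $p$-Laplacian term is irrelevant, and it is, because it cancels exactly in the Nehari identity. Note also that the hypothesis $I'(u_n)\to0$ is not even used, only $u_n\in\mathcal{N}$. Without the Nehari constraint, one would instead estimate $I(u_n)-\frac1pI'(u_n)u_n\le C+o(1)\Vert u_n\Vert_{\textbf{a}}$ and absorb the linear term, which is the standard route.
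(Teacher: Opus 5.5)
Your proposal is correct and follows essentially the same route as the paper: it applies the Nehari identity $I(u_n)=I(u_n)-\frac1p I'(u_n)u_n$, discards the nonnegative $\xi$-term and $L^r$-term, and bounds $\bigl(\frac{p-2}{2p}\bigr)\Vert u_n\Vert_{\textbf{a}}^2$ by the bounded sequence $I(u_n)$. You are also right that the hypothesis $I'(u_n)\to 0$ is not used, and the paper's proof does not use it either.
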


\begin{proof}
 As $I(u_n)\to c$,  the sequence $(I(u_n))$ is bounded in $\mathbb{R}$ by a constant $C>0$. But, in addition, since $u_n\in\mathcal{N}^\Gamma\subset\mathcal{N}$, as in \eqref{Equality:I-I'/p} we have that
\begin{align*}
 C&\geq \vert I(u_n)\vert = I(u_n) \\
   &=   \left(\frac{p-2}{2p}\right)\|u_n\|_{\textbf{a}}^{2}+ \frac{1 }{p}  \int_{M}  (f_q(u_n)u_n-pF_q(u_n)) \, dV_g + \left( \frac{r-p}{rp}\right) \int_{M} |u_n|^{r} \, dV_g \\
   &\geq   \left(\frac{p-2}{2p}\right)\|u_n\|_{\textbf{a}}^{2},
\end{align*}
for every $n\in\mathbb{N}$. Since $(p-2)/2p>0$, the sequence $(u_n)$ is bounded in $H_g^m(M)^\Gamma$.
\end{proof}

To end this section, we now prove that the Palais-Smale condition holds true for $(PS)_c^\Gamma$-sequences in $\mathcal{N}^\Gamma$.

\begin{proposition} \label{Propo:Palais-Smale}
Suppose that $p$ satisfies condition \eqref{Equation:p_subcritical}. If either 
\begin{itemize}
\item $r<2_{m,d}^\ast$,
\end{itemize}
or
\begin{itemize}
\item $r=2_{m,d}^\ast$ and condition \eqref{Hyp:Gamma} hold true,
\end{itemize}
then, for any $c\in\mathbb{R}$, every $(PS)_c^\Gamma$-sequence $(u_n)$ for the functional $I$ with $u_n\in\mathcal{N}$, has a converging subsequence in $H_g^1(M)^\Gamma$.
\end{proposition}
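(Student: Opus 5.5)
By Lemma \ref{BS}, the sequence $(u_n)$ is bounded in $H_g^m(M)^\Gamma$. Since this is a Hilbert space, after passing to a subsequence we get $u_n\rightharpoonup u$ weakly in $H_g^m(M)^\Gamma$. We then apply Lemma \ref{Lemma:HebeyVaugon}. In case (i), or in case (ii) using \eqref{Hyp:Gamma}, we have $r<2^*_{m,d-\alpha}$, and in either case $2,q<r$. Hence $u_n\to u$ strongly in $L^s_g(M)$ for $s=2,q,r$, and also a.e. on $M$. (The statement says $H_g^1$; we aim for convergence in $H_g^m(M)^\Gamma$, which is stronger.)

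The strategy is to test the Palais--Smale condition against $u_n-u$. Write
\[
\|u_n-u\|_{\textbf{a}}^2 = I'(u_n)(u_n-u) - \langle u,u_n-u\rangle_{\textbf{a}} \mp \int_M |\nabla u_n|^{p-2}\langle\nabla u_n,\nabla(u_n-u)\rangle_g + \int_M f_q(u_n)(u_n-u) + \int_M |u_n|^{r-2}u_n(u_n-u).
\]
Four of these terms tend to zero directly:
\begin{itemize}
\item $I'(u_n)(u_n-u)\to0$, because $I'(u_n)\to0$ and $(u_n-u)$ is bounded.
\item $\langle u,u_n-u\rangle_{\textbf{a}}\to0$ by weak convergence.
\item The $f_q$ term is at most $\varepsilon\|u_n\|_2\|u_n-u\|_2+C_\varepsilon\|u_n\|_q^{q-1}\|u_n-u\|_q$, using \eqref{crescimentoforigem} and Hölder, so it tends to $0$.
\item The $r$-term is at most $\|u_n\|_r^{r-1}\|u_n-u\|_r\to0$.
\end{itemize}

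The main obstacle is the $p$-Laplacian term. It has no sign in general, and weak convergence alone does not control it. The plan is to show it tends to zero via Proposition \ref{Proposition:CriticalGN} applied to $u_n-u$. By Hölder,
\[
\Big|\int_M |\nabla u_n|^{p-2}\langle\nabla u_n,\nabla(u_n-u)\rangle_g\Big| \le \|\nabla u_n\|_p^{p-1}\,\|\nabla(u_n-u)\|_p.
\]
The first factor is bounded by Proposition \ref{Proposition:CriticalGN}. For the second factor, the same proposition (extended to $H_g^m$ by density) gives
\[
\|\nabla(u_n-u)\|_p\le C\|u_n-u\|_{\textbf{a}}^{\Upsilon}\|u_n-u\|_2^{1-\Upsilon}.
\]
This is exactly where \eqref{Equation:p_subcritical} enters. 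If $m=2$ and $p<2^*_{1,d}$, or if $m\ge3$ and $p\le 2^*_{1,d}$, then $\Upsilon\le 2/m$ and $\Upsilon<1$ except in the excluded case $m=2$, $p=2^*_{1,d}$. Thus $1-\Upsilon>0$. Since $\|u_n-u\|_{\textbf{a}}$ is bounded and $\|u_n-u\|_2\to0$, the product tends to zero. This works regardless of the $\pm$ sign.

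Combining the five estimates gives $\|u_n-u\|_{\textbf{a}}\to0$. By Proposition \ref{Proposition:EquivalentNorms}, $u_n\to u$ in $H_g^m(M)^\Gamma$, and hence also in $H_g^1(M)^\Gamma$. The subspace $H_g^m(M)^\Gamma$ is closed, so the limit $u$ is $\Gamma$-invariant.

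The only delicate points are checking that $\Upsilon<1$ in each admissible case of \eqref{Equation:p_subcritical}, and the compactness of the critical embedding provided by \eqref{Hyp:Gamma}. If \eqref{Hyp:Gamma} is read with $\alpha\ge1$, then $2^*_{m,d}<2^*_{m,d-\alpha}$, so the critical exponent $r=2^*_{m,d}$ lies in the compact range of Lemma \ref{Lemma:HebeyVaugon}.
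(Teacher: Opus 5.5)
Your proposal is correct and is essentially the paper's argument (its Step 2): you test $I'(u_n)$ against $u_n-u$, dispose of the $f_q$ and $|u|^{r-2}u$ terms by the compact embedding (Lemma \ref{Lemma:HebeyVaugon} in the critical case), and kill the $p$-Laplacian term via Proposition \ref{Proposition:CriticalGN}, using $1-\Upsilon>0$ under \eqref{Equation:p_subcritical}. The only difference is that you omit the paper's Step 1 ($u_0\neq 0$), which is not needed for the convergence claim, since $u\in\mathcal{N}^\Gamma$ (hence $u\neq 0$) follows afterwards from strong convergence and the closedness of $\mathcal{N}$ in Lemma \ref{Lemma:NehariClosed}.
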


\begin{proof}
 Let $(u_n)$ be a $(PS)_c^\Gamma$-sequence with $u_n\in\mathcal{N}^\Gamma$ for every $n\in\mathbb{N}$. Hereafter, $C>0$ will denote a positive constant, not necessarily the same one, and not depending on $n$ nor the functions in $H_g^m(M)^\Gamma$. By Lemma \ref{BS}, the sequence $(u_n)$ is bounded in the Hilbert space $H_g^m(M)^\Gamma$ and, passing to a subsequence, which we denote the same, there exists $u_0\in H_g^m(M)$ such that
 \begin{equation}\label{Equation:WeakConvergence}
 u_n\rightharpoonup u_0\qquad \text{ weakly in }\ H_g^m(M).
 \end{equation}
 
  Since $H_g^m(M)^\Gamma$ is a closed subspace of $H_g^m(M)$, it is weakly closed and $u_0\in H_g^m(M)^\Gamma$. If $r<2_{m,d}^*$, then the embedding $H_g^m(M)\hookrightarrow L^{s}$ is compact for $s=2,q,r$, implying that $H_g^m(M)^\Gamma\hookrightarrow L^{s}$ is also compact for the same values. When $r=2_{m,d}^*$ and the hypothesis \eqref{Hyp:Gamma} hold true, then $\alpha:=\min\{\dim\;\Gamma x\;:\; x\in M\}\geq 1$ and $r=2_{m,d}^*<2_{m,d-\alpha}^*$, yielding that the Sobolev embedding $H_g^m(M)^\Gamma\hookrightarrow L_g^s(M)$ is also compact for $s=2,q,r$ by Lemma \ref{Lemma:HebeyVaugon} and because $2<q<r$. Then in either case, after passing to another subsequence,
\begin{equation}\label{Equation:StrongConvergenceL^s}
u_n\to u_0 \qquad\text{strongly in } \  L_g^s(M), \text{ for }s=2,q,r.
\end{equation}
We divide the rest of the proof into two steps.

\smallskip
\noindent \textbf{Step 1.} \emph{$u_0\neq 0$.}  
\smallskip 

To see this, suppose, in order to get a contradiction, that $u_0=0$. Then, on the one hand, the strong convergence of $u_n\to u_0=0$ in $L_g^r(M)$ gives that
\begin{equation}\label{Equation:L^rConvergenceToZero}
\int_{M} \vert u_n\vert^r \; dV_g\to \int_M \vert u_0\vert^r\; dV_g=0\qquad \text{as }n\to\infty,
\end{equation}
and the strong convergence in $L^2_g(M)$ and in $L_g^q(M)$ together with Lemma \ref{Lemma:ConvergenceFuntion} yield that
\begin{equation}\label{Equation:ConvergenceFunctionToZero}
\int_M f_q(u_n)u_n \; dV_g \rightarrow \int_M f_q(u_0)u_0 \; dV_g=0 \qquad \text{as }n\to\infty
\end{equation}
in another subsequence. Moreover, as the sequence $(u_n)$ is bounded and as condition \eqref{Equation:p_subcritical} holds true, by Proposition \ref{Proposition:CriticalGN}, there exist $\Upsilon\in[\frac{1}{m},\frac{2}{m}]\subset(0,1)$ such that
\begin{equation}\label{Equation:L^pConvergenceToZero}
 \int_M \vert \nabla u_n\vert^p_g \; dV_g\leq C \Vert u_n\Vert_{\textbf{a}}^{p\Upsilon}\Vert u_n\Vert_2^{p(1-\Upsilon)}\leq C \Vert u_n\Vert_2^{p(1-\Upsilon)}\to 0
\end{equation}
because $u_n\to u_0=0$ in $L^2_g(M)$ and because $1-\Upsilon>0$ with our choice of $p$. We now show that, actually, $u_n\to u_0=0$ strongly in $H_g^m(M)^\Gamma$: indeed, as $u_n\in\mathcal{N}^\Gamma\subset\mathcal{N}$, the definition of the Nehari set given in \eqref{DefinitionNehari}, together with \eqref{Equation:L^rConvergenceToZero}, \eqref{Equation:ConvergenceFunctionToZero} and \eqref{Equation:L^pConvergenceToZero}, yield that
\[
\lim_{n\to\infty}\Vert u_n\Vert_{\textbf{a}}^2 = \mp \lim_{n\to\infty} \int_{M} \vert\nabla u_n\vert_g^p\; dV_g + \lim_{n\to\infty} \int_M f_q(u_n)u_n\; dV_g + \lim_{n\to\infty}\int_M \vert u_n\vert^r \; dV_g = 0 =\Vert u_0\Vert_{\textbf{a}}.
\]
Therefore, as weak convergence and convergence in norm in a Hilbert space imply strong convergence, the previous limit together with \eqref{Equation:WeakConvergence}, imply that $u_n\to u$ strongly in $H_g^m(M)$ and, as $\mathcal{N}^\Gamma$ is closed, we have that $u_0=0\in\mathcal{N}^\Gamma\subset\mathcal{N}$, which is a contradiction to definition \eqref{DefinitionNehari}. Thus, $u_0\neq 0$ as we claimed.

\smallskip
\noindent\textbf{Step 2.} \emph{Passing to a subsequence, $u_n\to u_0$ strongly in  $H_g^m(M)$ and $u_0\in\mathcal{N}^\Gamma$.}

\smallskip
First observe that Hölder inequality with $s$ and $\frac{s}{s-1}$, for $s=2,q$ and $r$, combined with  \eqref{Equation:StrongConvergenceL^s}, provide that
\begin{align*}
\left\vert \int_M \vert u_n\vert^{s-2}u_n(u_n-u_0)\; dV_g\right\vert & \leq \int_M \vert u_n\vert^{s-1}\vert u_n - u_0\vert \;dV_g\\
&\leq \left( \int_M \vert u_n\vert^s \; dV_g\right)^{\frac{s-1}{s}} \left( \int_M \vert u_n-u_0\vert^s \; dV_g\right)^{\frac{1}{s}}\\
&=\Vert u_n\Vert_{s}^{s-1}\Vert u_n-u_0\Vert_s\\
&\leq C \Vert u_n-u_0\Vert_s\to 0
\end{align*}
because $u_n\to u_0$ in $L^s_g(M)$ and, thus, $(u_n)$ is also bounded in $L^s_g(M)$. Therefore
\begin{equation}\label{Equation:L^2iso(1)}
\int_M \vert u_n\vert^{s-2}u_n(u_n-u_0)\; dV_g = o(1) \ \text{ as }n\to\infty,\qquad\text{ for }s=2,q,r.
\end{equation}
Analogously, by \eqref{crescimentoforigem} that
\begin{align*}
\left\vert \int_M f_q(u_n)(u_n-u_0) \; dV_g \right\vert & \leq \int_M \vert f_q(u_n) \vert \vert u_n-u_0\vert \; dV_g\\
& \leq C \int_M \vert u_n\vert \vert u_n-u_0\vert  \;dV_g + C\int_M\vert u_n\vert^{q-1}\vert u_n - u_0\vert \;dV_g\\
&\to 0,
\end{align*}
yielding that 
\begin{equation}\label{Equation:fiso(1)}
\int_M f_q(u_n)(u_n-u_0) \;dV_g = o(1) \ \text{ as }n\to\infty,
\end{equation}
as well. 

Now, by the Cauchy-Schwarz inequality for the metric $g$, together with Hölder inequality for $p$ and $\frac{p}{p-1}$, Proposition \ref{Proposition:CriticalGN}, the Sobolev embedding $H_g^m(M)\hookrightarrow L^2_g(M)$, and the strong convergence $u_n\to u_0$ in $L_g^2(M)$ given by \eqref{Equation:StrongConvergenceL^s}, we have that
\begin{align*}
&\left\vert\int_M \vert \nabla u_n\vert^{p-2}_g \langle \nabla u_n,\nabla (u_n-u_0)\rangle_g \; dV_g\right\vert \leq \int_M \vert \nabla u_n\vert^{p-2}_g  \vert\langle \nabla u_n,\nabla (u_n-u_0)\rangle_g\vert \; dV_g\\
&\leq  \int_M \vert \nabla u_n\vert^{p-1}_g  \vert \nabla (u_n-u_0)\vert_g \; dV_g
\leq \left(\int_M \vert \nabla u_n\vert^{p}_g \; dV_g\right)^{\frac{p-1}{p}} \left(\int_M \vert \nabla (u_n-u_0)\vert_g^p\; dV_g\right)^{\frac{1}{p}}\\
&\leq C \Vert u_n\Vert_{\textbf{a}}^{\Upsilon(p-1)}\Vert u_n\Vert_2^{(1-\Upsilon)(p-1)} \Vert u_n - u_0\Vert_{\textbf{a}}^{\Upsilon}\Vert u_n-u_0\Vert_2^{(1-\Upsilon)}\\
&\leq C \Vert u_n\Vert_{\textbf{a}}^{\Upsilon(p-1)}\Vert u_n\Vert_{\textbf{a}}^{(1-\Upsilon)(p-1)} \left(\Vert u_n\Vert^\Upsilon_{\textbf{a}} + \Vert u_0\Vert_{\textbf{a}}^{\Upsilon}\right)\Vert u_n-u_0\Vert_2^{(1-\Upsilon)}\\
&\leq C \Vert u_n-u_0\Vert_2^{(1-\Upsilon)} \to 0
\end{align*}
where $\Upsilon\in(0,1)$ because $p$ satisfies the condition \eqref{Equation:p_subcritical}, and where we used that the sequence $(u_n)$ is bounded in $H_g^m(M)$. Therefore,
\begin{equation}\label{Equation:DerivativeIso(1)}
\int_M \vert \nabla u_n\vert^{p-2}_g \langle \nabla u_n,\nabla (u_n-u_0)\rangle_g \; dV_g = o(1) \ \text{ as }n\to\infty.
\end{equation}
Moreover, as $u_n\rightharpoonup u_0$ weakly in $H_g^m(M)$, then
\begin{equation}\label{Equation:NormIso(1)}
\langle u_n, u_0\rangle_{\textbf{a}}=\Vert u_0\Vert^2_{\textbf{a}} + o(1)\quad \text{ as }n\to\infty.
\end{equation}
Finally, since $(u_n)$ is a $(PS)_c^\Gamma$ sequence for $I$ and it is bounded, we have that
\begin{align*}
\vert I'(u_n)(u_n-u_0)\vert&
\leq \Vert I'(u_n)\Vert_{(H_g^m(M))'}\Vert u_n - u_0\Vert_{\textbf{a}}\\
&\leq \Vert I'(u_n)\Vert_{(H_g^m(M))'}(\Vert u_n \Vert_{\textbf{a}} + \Vert u_0\Vert_{\textbf{a}})\\
&\leq C \Vert I'(u_n)\Vert_{(H_g^m(M))'}\to 0
\end{align*}
as $n\to\infty$ and hence
\begin{equation}\label{Equation:FunctionalIso(1)}
I'(u_n)(u_n-u_0) = o(1),\quad\text{ as }n\to\infty.
\end{equation}
Therefore, substituting \eqref{Equation:FunctionalIso(1)}, \eqref{Equation:NormIso(1)}, \eqref{Equation:fiso(1)} and \eqref{Equation:L^2iso(1)} for $s=r$ in the expression of $I'$ given in \eqref{Equation:DerivativeFunctional}, we obtain that
\begin{align*}
o(1)&=I'(u_n)(u_n-u_0)\\
& = \langle u_n, u_n-u_0 \rangle_{\textbf{a}} \pm \int_M \vert \nabla u_n\vert_g^{p-2}\langle \nabla u_n,\nabla(u_n-u_0)\rangle_g \; dV_g - \int_M f_q(u_n)(u_n-u_0) \; dV_g \\
&\hspace{2cm}- \int_M \vert u_n\vert^{r-2}u_n(u_n-u_0)\; dV_g\\
&= \Vert u_n\Vert_{\textbf{a}}^2 - \Vert u_0\Vert_{\textbf{a}}^2 + o(1)
\end{align*}
as $n\to\infty.$ From this, we infer that
\[
\Vert u_n\Vert_{\textbf{a}}\to\Vert u_0\Vert_{\textbf{a}}\quad \text{ as }n\to\infty.
\]
Again, as also $u_n\rightharpoonup u_0$  weakly in $H_g^m(M)$, we deduce that $u_n\to u_0$ strongly in $H_g^m(M)$ in a subsequence, and since $\mathcal{N}^\Gamma$ is  closed in $H_g^m(M)$, we have that $u_0\in\mathcal{N}^\Gamma$. This proves the proposition.
\end{proof}

\section{Critical Point Theory and proof of Theorem \ref{Theorem:Main}}\label{Section:CriticalPointTheory}

In this section, we maintain the same assumptions on the group $\Gamma$ and on the function $a_0\in C^\infty(M)$ as in Section \ref{Section:SymmetricSetting}. In order to handle the existence of critical points on the Nehari set $\mathcal{N}^\Gamma$, we follow the approach given in \cite{SzulkinWeth2010}. 
Recall the definition of the projection onto $\mathcal{N}$ given in Lemma \ref{Lemma:ProjectionOdd}. Then, its restriction to $H_g^m(M)^\Gamma$ induces a continuous and odd map
\[
\widehat{H}: H_g^m(M)^\Gamma\smallsetminus\{0\}\to \mathcal{N}^\Gamma,\quad \widehat{H}(u):= t_uu
\]
which also induces an odd homeomorphism $H$ between $\mathcal{N}^\Gamma$ and 
\[
S_1^\Gamma(0):= S_1(0)\cap H_g^m(M)^\Gamma.
\]
Consider the functionals $\widehat{\Psi}:H_g^m(M)^\Gamma\smallsetminus\{0\}\to \mathbb{R}$ and $\Psi:S_1^\Gamma(0)\to\mathbb{R}$ defined by
\[
\widehat{\Psi}(w):=I(\widehat{H}(w))= I(t_w w),\qquad \Psi:=\widehat{\Psi}\big\vert_{S_1^\Gamma(0)}.
\]
Observe that 
\[
\Psi(w) = I(t_w w),\quad \forall w\in S_1^\Gamma(0),\quad\text{and}\quad \Psi(H^{-1}(u))=\Psi\left(\frac{u}{\Vert u\Vert_{\textbf{a}}}\right) = I(u)\quad\forall\; u\in\mathcal{N}^\Gamma,
\]
by Corollary \ref{Corollary:Homeomorphism}.
As the regularity of $f$ does not assure that $\mathcal{N}^\Gamma$ is a $C^1$-Hilbert submanifold of $H_g^m(M)^\Gamma$, we will use the fact that there is a one-to-one correspondence between the critical points of $\Psi$ on $S_1^\Gamma(0)$ and the critical points of $I$ in $H_g^m(M)^\Gamma$. For this to happen, the functional $I$ and the functional space $H_g^m(M)^\Gamma$ must satisfy certain conditions, introduced in \cite{SzulkinWeth2010}, which we will next state and verify.

\begin{lemma}\label{Lemma:PropertiesA_1-A_3}
The following conditions hold true 

\begin{enumerate}[label=($A_1$),ref=$(A_1)$]
		\item \label{A_1} There exists an strictly increasing and continuous function $\varphi:[0,\infty)\to[0,\infty)$ such that $\varphi(0)=0$, $\varphi(t)\to\infty$ as $t\to\infty$ and such that the function
        \[
u\mapsto\xi(u):=\int_{0}^{\Vert u\Vert_{\textbf{a}}}\varphi(t) \;dt\in C^1(H_g^m(M)^\Gamma,\mathbb{R})
        \]
        satisfies that $\xi'$ is bounded on bounded sets and $\xi'(w)w=1$ for every $w\in S_1^\Gamma(0)$.
\end{enumerate}

\begin{enumerate}[label=($A_2$),ref=$(A_2)$]
		\item \label{A_2} For each $w\in H_g^m(M)^\Gamma\smallsetminus\{0\}$, there exists $t_w\in\mathbb{R}$ such that if $\mu_w(t):= I(tw)$, then $\mu_w'(t)>0$ for $0<t<t_w$ and $\mu_w'(t)<0$ for $t>t_w$. 
\end{enumerate}

\begin{enumerate}[label=($A_3$),ref=$(A_3)$]
		\item \label{A_3} There exists $\delta>0$ such that $t_w>\delta$ for all $w\in S_1^\Gamma(0)$ and for each compact subset $\mathcal{W}\subset S_1^\Gamma(0)$ there exists a constant $C_{\mathcal{W}}$ such that $t_w\leq C_{\mathcal{W}}$ for all $w\in\mathcal{W}$. 
\end{enumerate}
\end{lemma}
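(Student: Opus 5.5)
We verify the three conditions directly, using the Hilbert structure of $(H_g^m(M)^\Gamma,\langle\cdot,\cdot\rangle_{\textbf{a}})$ and Lemmas \ref{Lemma:NehariNonEmpty} and \ref{Lemma:NehariClosed}.

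For \ref{A_1} we take the simplest choice, $\varphi(t):=t$. It is continuous, strictly increasing, $\varphi(0)=0$ and $\varphi(t)\to\infty$. Then
\[
\xi(u)=\int_0^{\Vert u\Vert_{\textbf{a}}}t\,dt=\tfrac12\Vert u\Vert_{\textbf{a}}^2 ,
\]
which is $C^1$ on the Hilbert space with $\xi'(u)v=\langle u,v\rangle_{\textbf{a}}$. Hence $\Vert\xi'(u)\Vert=\Vert u\Vert_{\textbf{a}}$ is bounded on bounded sets, and $\xi'(w)w=\Vert w\Vert_{\textbf{a}}^2=1$ for every $w\in S_1^\Gamma(0)$.

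For \ref{A_2}, fix $w\ne0$ and let $t_w$ be the unique number given by Lemma \ref{Lemma:NehariNonEmpty}. From its proof we know three facts: $\mu_w$ is $C^1$; $\mu_w'(t)=0$ for $t>0$ if and only if $tw\in\mathcal{N}$, which happens only at $t=t_w$; and $\mu_w>0$ near $0$ while $\mu_w(t)\to-\infty$. By continuity, $\mu_w'$ has constant sign on $(0,t_w)$ and on $(t_w,\infty)$. Since $\mu_w(0)=0<\mu_w(t_w)$, the mean value theorem gives $\mu_w'>0$ somewhere in $(0,t_w)$, hence on all of it. Since $\mu_w\to-\infty$, $\mu_w'<0$ somewhere in $(t_w,\infty)$, hence on all of it. This argument is carried out inside $H_g^m(M)^\Gamma$, since $t_ww\in\mathcal{N}^\Gamma$ whenever $w$ is $\Gamma$-invariant.

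For \ref{A_3}, take $w\in S_1^\Gamma(0)$. Since $t_ww\in\mathcal{N}$, Lemma \ref{Lemma:NehariClosed}\ref{Item:BoundedBelow} gives $t_w=\Vert t_ww\Vert_{\textbf{a}}\ge K_0$, so any $\delta\in(0,K_0)$ works.

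For the upper bound on a compact $\mathcal{W}\subset S_1^\Gamma(0)$, the quickest route is continuity. By Lemma \ref{Lemma:ProjectionOdd}, $w\mapsto t_w$ is continuous, so it is bounded on the compact set $\mathcal{W}$.

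That continuity proof only extracts convergent subsequences, so it is slightly delicate. To avoid relying on it, we give a direct argument by contradiction. Suppose $w_n\in\mathcal{W}$ with $t_{w_n}\to\infty$. By compactness, $w_n\to w$ in $H_g^m(M)$ along a subsequence, and $\Vert w\Vert_{\textbf{a}}=1$. By the estimate \eqref{Equation:AboveEstimateEnergyNehari}, together with the convergence of $\Vert w_n\Vert_{\textbf{a}}$, $\int_M\vert w_n\vert^r$ and $\int_M\vert\nabla w_n\vert^p$ (from Proposition \ref{Proposition:CriticalGN}), we get
\[
I(t_{w_n}w_n)\le t_{w_n}^r\Big(-\tfrac1r\int_M\vert w\vert^r\,dV_g+o(1)\Big)<0
\]
for large $n$. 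This contradicts $I\ge K_0>0$ on $\mathcal{N}$.

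The main obstacle is the compact-set bound in \ref{A_3}, specifically getting uniform control of the $p$-gradient term along the sequence. This is handled by Proposition \ref{Proposition:CriticalGN}, which makes $u\mapsto\int_M\vert\nabla u\vert_g^p$ continuous on $H_g^m(M)$. The remaining steps are routine.
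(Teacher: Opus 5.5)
Your proof is correct and follows the paper's route: $\varphi(t)=t$ for $(A_1)$, the unique maximizer $t_w$ from Lemma \ref{Lemma:NehariNonEmpty} for $(A_2)$, and, for $(A_3)$, the bound $t_w=\Vert t_ww\Vert_{\textbf{a}}\geq K_0$ together with continuity of $w\mapsto t_w$ from Lemma \ref{Lemma:ProjectionOdd}. Your extra sign argument in $(A_2)$ and your direct contradiction argument for the compact-set bound (which is the unboundedness step of the continuity proof in Lemma \ref{Lemma:ProjectionOdd}) only make explicit what the paper leaves implicit, and you correctly write $\xi(u)=\tfrac12\Vert u\Vert_{\textbf{a}}^2$ where the paper drops the square.
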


\begin{proof}
As remarked in Section 3 of \cite{SzulkinWeth2010}, taking $\varphi(t)=t$, condition \ref{A_1} is trivially fulfilled. Indeed, in this case $\xi(u)=\frac{1}{2}\Vert u\Vert_{\textbf{a}}$ which is of class $C^1$ in $H_g^m(M)^\Gamma$ because the norm $\Vert \cdot\Vert_{\textbf{a}}$ is an equivalent norm to the standard norm in $H_g^m(M)$ by Proposition \ref{Proposition:EquivalentNorms}. Moreover, $\xi'(u)w=\langle u, w\rangle_{\textbf{a}}$, which implies that $\xi'(w)w=\Vert w\Vert_{\textbf{a}}^2$ is bounded on bounded sets of $H_g^m(M)^\Gamma$ and $\xi'(w)w=\Vert w\Vert_{\textbf{a}}^2=1$ if $w\in S_1^\Gamma(0)$, proving that condition \ref{A_1} is satisfied. 

Condition \ref{A_2} follows directly from Lemma \ref{Lemma:NehariNonEmpty}. In fact, in Lemma \ref{Lemma:NehariNonEmpty}, for any $w\in H_g^m(M)^\Gamma\smallsetminus\{0\}\subset H_g^m(M)\smallsetminus\{0\}$, we proved the existence of a unique $t_w\in(0,\infty)$ such that $t_w w\in\mathcal{N}$. As $w\in H_g^m(M)^\Gamma$, then also $v:=t_ww$ satisfies that $v\circ\gamma= t_{w\circ\gamma}w\circ\gamma=t_w w=v$ and $t_ww\in\mathcal{N}^\Gamma$. Moreover, we proved in the same lemma that $\mu_w(t):= I_w(t)=I(tw)$ has a unique critical point $t_w\in(0,\infty)$ which is a maximum, and as $I$ is of class $C^1$, then $\mu_w$ is of class $C^1$ and $\mu_w'(t)>0$ for every $t\in(0,t_w)$ and $\mu_w'(t)<0$ for every $t>t_w$, concluding that condition \ref{A_2} holds true.

Finally, we verify condition \ref{A_3}. To check the first assertion in \ref{A_3}, recall that in Lemma \ref{Lemma:NehariClosed} we proved the existence of $K_0>0$ such that $\Vert u\Vert_{\textbf{a}}\geq K_0$, for every $u\in\mathcal{N}^\Gamma\subset\mathcal{N}$. Hence, taking $\delta:= K_0/2$, for any $w\in S_1^\Gamma(0)$ we have that $t_w w\in\mathcal{N}^\Gamma$ and
\[
t_{w}=t_w\Vert w\Vert_{\textbf{a}} = \Vert t_w w\Vert_\textbf{a}\geq K_0>\delta,
\]
as we wanted to verify. Now, to verify the second assertion in \ref{A_3}, in Lemma \ref{Lemma:ProjectionOdd}, we proved that the map $u\mapsto t_u\in(0,\infty)$ is continuous in $H_g^m(M)^\Gamma\smallsetminus\{0\}\subset H_g^m(M)\smallsetminus\{0\}$. Therefore, for any compact subset $\mathcal{W}\subset S^\Gamma_1(0)$, this map attains its maximum $C_\mathcal{W}>0$, yielding that $t_w\leq C_{\mathcal{W}}$ for any $w\in \mathcal{W}$, and the lemma follows. 
\end{proof}

It follows from condition \ref{A_1} that $S_1^\Gamma(0)$ is a codimension one $C^1$-Hilbert submanifold of $H_g^m(M)^\Gamma$ and that the tangent space of $S_1^\Gamma(0)$ at $w\in S_1^\Gamma(0)$ is given by
\[
T_w(S_1^\Gamma(0))= \{ u\in H_g^m(M)^\Gamma \;:\; \xi'(w)u=0 \}.
\]
As conditions \ref{A_1}-\ref{A_3} are fulfilled, it follows from Proposition 9 in \cite{SzulkinWeth2010} that $\widehat{\Psi}\in C^1(H_g^m(M)^\Gamma\smallsetminus\{0\},\mathbb{R})$ and its derivative is given by
\[
\widehat{\Psi}'(w) u = \frac{\Vert t_w w\Vert_{\textbf{a}}}{ \Vert w\Vert_{\textbf{a}} } I'(t_ww)u, \qquad\forall\; u,w\in H_g^m(M)^\Gamma, w\neq 0.
\]
Moreover, by Corollary 10 in \cite{SzulkinWeth2010}, $\Psi\in C^1(S_1^\Gamma(0),\mathbb{R})$ and
\[
\Psi'(w)u = \Vert t_w w\Vert_{\textbf{a}} I'(t_ww)u,\qquad \forall \; u\in T_w(S_1^\Gamma(0)).
\]
For any $w\in S_1^\Gamma(0)$, the norm of $\Psi'(w)$ in the cotangent space $T_w^\ast(S_1^\Gamma(0))$ is given by
\[
\Vert \Psi'(w) \Vert_{\star}:= \sup_{v\in T_w(S_1^\Gamma(0)),\\ v\neq 0} \frac{\vert \Psi'(w)v\vert}{\Vert v \Vert_{\textbf{a}}}.
\]
Given $c\in\mathbb{R}$, a sequence $(w_n)$ in $S_1^\Gamma(0)$ is a $(PS)_c^\Gamma$-sequence for $\Psi$ if $\Psi(w_n)\to c$ and $\Vert \Psi'(w_n)\Vert_{\star}\to 0$. The functional $\Psi$ is said to satisfy the $(PS)_c^\Gamma$-condition in $S_1^\Gamma(0)$ if every such sequence has a convergent subsequence. We are ready to establish the desired correspondence between the critical points of $\Psi$ in $S_1^\Gamma(0)$ and the critical point of $I$ in $H_g^m(M)^\Gamma$, which is a consequence of \ref{A_1}-\ref{A_3}.

\begin{proposition}[Corollary 10 in \cite{SzulkinWeth2010}]\label{Corollary:WethSzulkin}

\begin{enumerate}[label=(\roman*)]
\item Given $c\in\mathbb{R}$, if $(w_n)$ is a $(PS)_c^\Gamma$-sequence for $\Psi$ on $S_1^\Gamma(0)$, then $(t_{w_n}w_n)$ is a $(PS)_c^\Gamma$-sequence for $I$ with $t_{w_n}w_n\in\mathcal{N}^\Gamma$. If $(u_n)$ is a bounded $(PS)_c^\Gamma$-sequence for $I$ with $u_n\in\mathcal{N}^\Gamma$ for every $n\in\mathbb{N}$, then $(H^{-1}(u_n))$ is a $(PS)_c^\Gamma$-sequence for $\Psi$.
\item $w$ is a critical point of $\Psi$ on $S_1^\Gamma(0)$ if and only if $t_w w$ is a nontrivial critical point of $I$ in $H_g^m(M)^\Gamma$. Moreover, the corresponding values of $\Psi$ and $I$ coincide and
\[
\inf_{S_1^\Gamma(0)}\Psi = \inf_{\mathcal{N}^\Gamma} I.
\]

\end{enumerate}
\end{proposition}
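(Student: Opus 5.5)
The statement is Corollary 10 of \cite{SzulkinWeth2010}, and I would obtain it by checking that we are inside that abstract framework rather than re-proving it. The setting there is a Hilbert space $E$ with a $C^1$ functional $I$ satisfying \ref{A_1}--\ref{A_3}. Here we take $E=H_g^m(M)^\Gamma$, a closed subspace of $H_g^m(M)$ with the inner product $\langle\cdot,\cdot\rangle_{\textbf{a}}$, and $I$ restricted to it, which is $C^1$. Lemma \ref{Lemma:PropertiesA_1-A_3} verifies \ref{A_1}--\ref{A_3}, and Lemma \ref{Lemma:NehariNonEmpty} shows that $t_ww\in\mathcal{N}^\Gamma$ whenever $w$ is $\Gamma$-invariant. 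Hence the Nehari set of $I|_{E}$ is exactly $\mathcal{N}^\Gamma$. Proposition 9 of \cite{SzulkinWeth2010} then gives the derivative formulas for $\widehat\Psi$ and $\Psi$ recorded above, and I would derive both items from them.

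For (ii), write $u=t_ww$ for $w\in S_1^\Gamma(0)$. The space $E$ splits as $E=T_w(S_1^\Gamma(0))\oplus\mathbb{R}w$. We have $I'(u)w=t_w^{-1}I'(u)u=0$ because $u\in\mathcal{N}^\Gamma$. Consequently $\Psi'(w)=0$ on the tangent space if and only if $I'(u)=0$ on all of $E$. Palais' principle of symmetric criticality (Lemma \ref{Lemma:FunctionalGammaInvariant}) then upgrades this to a critical point of $I$ on the whole of $H_g^m(M)$ whenever that is needed. The two functionals take the same value because $\Psi(w)=I(t_ww)$. Since $H$ is a bijection $S_1^\Gamma(0)\to\mathcal{N}^\Gamma$ (Corollary \ref{Corollary:Homeomorphism}), the two infima coincide.

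For (i), the key quantitative step is the two-sided estimate
\[
t_w\,\Vert I'(t_ww)\Vert_{E'}\;\ge\;\Vert\Psi'(w)\Vert_\star\;\ge\;\frac{t_w}{C}\,\Vert I'(t_ww)\Vert_{E'} .
\]
To get it, decompose any $z\in E$ as $z=v+sw$ with $v$ tangent. Then $I'(t_ww)z=I'(t_ww)v$, and $\Vert v\Vert_{\textbf{a}}\le\Vert z\Vert_{\textbf{a}}$ because the decomposition is orthogonal, which gives the comparison. I would then transfer Palais--Smale sequences in each direction.

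\emph{From $\Psi$ to $I$.} Let $(w_n)$ be a $(PS)_c^\Gamma$-sequence for $\Psi$. We have $t_{w_n}\ge\delta$ by \ref{A_3}, so $\Vert I'(t_{w_n}w_n)\Vert\to0$. Also $I(t_{w_n}w_n)=\Psi(w_n)\to c$.

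\emph{From $I$ to $\Psi$.} Let $(u_n)\subset\mathcal{N}^\Gamma$ be a bounded $(PS)_c^\Gamma$-sequence for $I$. Then $t_{w_n}=\Vert u_n\Vert_{\textbf{a}}$ is bounded, so $\Vert\Psi'(w_n)\Vert_\star\le t_{w_n}\Vert I'(u_n)\Vert\to0$.

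The only real obstacle is the regularity claim behind Proposition 9, namely that $\widehat\Psi$ is $C^1$ even though $w\mapsto t_w$ is merely continuous. I would handle it as Szulkin--Weth do. Use the mean value theorem on $I(t_ww+s z)-I(t_ww)$ together with the maximality of $t_w$ along rays (\ref{A_2}) to trap the difference quotient of $\widehat\Psi$ between two expressions. Both expressions converge to $\frac{\Vert t_ww\Vert_{\textbf{a}}}{\Vert w\Vert_{\textbf{a}}}I'(t_ww)z$ by continuity of $t_w$ (Lemma \ref{Lemma:ProjectionOdd}) and of $I'$.
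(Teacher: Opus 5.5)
Your proposal is correct and takes the same route as the paper. Both verify $(A_1)$--$(A_3)$ for $I$ on $E=H_g^m(M)^\Gamma$ (Lemma~\ref{Lemma:PropertiesA_1-A_3}) and then rely on Proposition~9 and Corollary~10 of \cite{SzulkinWeth2010}; the only difference is that you reconstruct the Szulkin--Weth argument where the paper just cites it (the orthogonal splitting $E=T_w(S_1^\Gamma(0))\oplus\mathbb{R}w$ gives in fact $\Vert\Psi'(w)\Vert_\star=t_w\Vert I'(t_ww)\Vert_{E'}$).

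One small point you share with the paper: your argument gives $I'(t_{w_n}w_n)\to0$ in $E'$, whereas the paper's definition of $(PS)_c^\Gamma$ literally asks for convergence in $(H_g^m(M))'$. The two dual norms coincide at $\Gamma$-invariant points, by averaging test functions over the Haar measure of $\Gamma$. In any case, only test functions in $E$ are used later.
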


We have everything to prove our main result.

\begin{proof}[Proof of Theorem \ref{Theorem:Main}]

We adapt the proof of Theorem 12 in \cite{SzulkinWeth2010} to our situation. By Lemma \ref{Lemma:PropertiesA_1-A_3}, the Hilbert space $H_g^m(M)^\Gamma$ and the functional $I$ satisfies properties \ref{A_1} - \ref{A_3} above; therefore, if $(w_n)$ is a $(PS)_c^\Gamma$ sequence for the functional $\Psi$ in $S_1^\Gamma(0)$, then $u_n:= t_{w_n}w_n$ is a $(PS)_c^\Gamma$ sequence for $I$ with $u_n\in\mathcal{N}^\Gamma$.

We prove next that $\Psi$ satisfies the $(PS)_c^\Gamma$-condition in $S_1^\Gamma(0)$ for every $c\in\mathbb{R}$. Indeed, consider $c\in\mathbb{R}$ and a $(PS)_c^\Gamma$-sequence $(w_n)$ in $S_1^\Gamma(0)$. By the above remark, $(u_n:=t_{w_n}w_n)$ is a $(PS)_c^\Gamma$-sequence for $I$ with $u_n\in\mathcal{N}^\Gamma$, and by Proposition \ref{Propo:Palais-Smale}, there exists $u_0\in\mathcal{N}^\Gamma$ such that $u_n\to u_0$ in $H_g^m(M)^\Gamma$ in a subsequence. As the function $H^{-1}:\mathcal{N}\to S_1(0)$ given in Corollary \ref{Corollary:Homeomorphism} is continuous, we have that $w_n=H^{-1}(u_n)\to w_0:=H^{-1}(u_0)$ and by continuity of $\Psi$, $\Psi(w_n)\to \Psi(w_0)=c$ in a subsequence, proving that $\Psi$ satisfies the $(PS)_c^\Gamma$-condition in $S_1^\Gamma(0)$.

Now we see the existence of a least energy $\Gamma$-invariant solution to \eqref{Problem:Main}. Let $(w_n)$ be a minimizing sequence for $\Psi$ in $S_1^\Gamma(0)$, i.e., $\Psi(w_n)\to c_0^\Gamma$. By Ekeland's variational Principle \cite{Ekeland1974,MawhinWillemBook}, we may assume that $\Psi'(w_n)\to 0$, so that $(w_n)$ is a $(PS)_{c_0^\Gamma}^\Gamma$-sequence in $S_1^\Gamma(0)$. Since $\Psi$ satisfies the $(PS)_{c_0^\Gamma}^\Gamma$-condition in $S_1^\Gamma(0)$, after passing to a subsequence we have that $w_n\to w_0\in S_1^\Gamma(0)$ in $H^m_g(M)^\Gamma$,  and $w$ is a minimizer for $\Psi$ in $S_1^\Gamma(0)$. By Proposition \ref{Corollary:WethSzulkin}, $u_0:=t_{w_0}w_0\in\mathcal{N}^\Gamma$ is a minimizer for $I$ in $\mathcal{N}^\Gamma$ and a nontrivial critical point of $I$ in $H_g^m(M)^\Gamma$. As $I$ is $\Gamma$-invariant by Lemma \ref{Lemma:FunctionalGammaInvariant}, the Principle of Symmetric Criticality \cite{Palais1979} yields that $u_0$ is a $\Gamma$-invariant solution to the problem \eqref{Problem:Main} with least energy $I(u_0)=c_0^\Gamma$. Notice that when $\Gamma=\{Id\},$ we have that $H_g^m(M)^\Gamma=H_g^m(M)$, $\mathcal{N}^\Gamma=\mathcal{N}$ and $c_0^\Gamma=c_0$, yielding that $u_0$ is a ground state in this case.

Finally, condition \ref{f_4} implies that the functional $I$ is even and since the function $H$ is odd by Lemma \ref{Lemma:ProjectionOdd}, we have that $\Psi=I\circ H$ is also even. Moreover, since $\Gamma$ satisfies \eqref{ActionNotTransitive}, as we remarked at the beginning of Section \ref{Section:SymmetricSetting},  $H_g^m(M)^\Gamma$ is an infinite dimensional Hilbert space. Furthermore, as $\Psi\in C^1(S_1^\Gamma(0))$ and as $\inf_{S_1^\Gamma(0)}\Psi=\inf_{\mathcal{N}^\Gamma} I=c_0^\Gamma>0$, then the functional $\Psi$ is bounded from below on $S_1^\Gamma(0)$ and as it satisfies the $(PS)_c^\Gamma$-condition in $S_1^\Gamma(0)$ for every $c\in\mathbb{R}$, then Theorem 2 in \cite{SzulkinWeth2010} applies (see also \cite{RabinowitzBook}), and$\Psi$ has infinitely many pairs of nontrivial critical points in $H_g^1(M)^\Gamma$ with increasing values. Again, by Proposition \ref{Corollary:WethSzulkin}, $I$ has infinitely many pairs of nontrivial critical points in $H_g^m(M)^\Gamma$ with increasing energy and by the Principle of Symmetric Criticality \cite{Palais1979}, problem \eqref{Problem:Main} admits an infinite number of $\Gamma$-invariant nontrivial solutions.

\end{proof}

\begin{proof}[Proof of Corollary \ref{Corollary:Kirschoff-BoussinesqJGMS}] Since the scalar curvature in an Einstein manifold is constant (see \cite{BesseBook}), as a direct consequence of Theorem 12 in \cite{Gover06}, we have that the operator $\mathscr{P}_g^m$ has the form
\[
\mathscr{P}_g^m=\prod_{\ell=1}^m (-\Delta_g + c_\ell \text{Sc}_g)
\]
where $c_\ell:=(d+2\ell-2)(d-2\ell)/(4d(d-1)>0$ because $d>2m$, and where $\text{Sc}_g>0$ denotes the scalar curvature of $(M,g)$, and, therefore, it has the form \eqref{Def:Operator} with $a_i>0$ for every $i$. Then the corollary follows directly from Theorem \ref{Theorem:Main}.
\end{proof}


\appendix

\section{Gagliardo-Nirenber interpolation inequality and equivalent norms in the space $H_g^m(M)$} \label{Appendix:ProofGagliardo-Nirenberg}

In this appendix, we prove Theorem \ref{Theorem:Gagliardo-Nirenberg} and use it to give several equivalent norms in the Hilbert space $H_g^m(M)$

\subsection{Proof Gagliardo-Nirenberg inequality}

Consider the space $H_g^m(M)$ with the standard norm $\Vert\cdot\Vert_{H_g^m(M)}$ given in \eqref{Equation:UsualSobolevNorm}. In order to prove Theorem \ref{Theorem:Gagliardo-Nirenberg}, we use the following well known result.

\begin{theorem}(\cite[Theorem 3.70]{AubinBook})\label{Theorem:Gagliardo-NirenbergZeroMean}
Let $(M,g)$ be a closed Riemannian manifold of dimension $d$ Let $j, m$ be integers satisfying that $0 \leq j < m$, $1\leq \kappa_1,\kappa_2,\kappa_3\leq\infty$ be real numbers, and $\Upsilon\in[0,1]$ satisfying that
\begin{align*}
    \frac{1}{\kappa_1}=\frac{j}{d}+ \left(\frac{1}{\kappa_3}-\frac{m}{d}\right)\Upsilon + \frac{(1-\Upsilon)}{\kappa_2} \quad \mbox{and} \quad \frac{j}{m}\leq \Upsilon \leq 1.
\end{align*}
 Then there exists a constant $C=C(d,m,j,\kappa_2,\kappa_3,\Upsilon,M)>0$ such that for all $u \in C^\infty(M)$ with
 \[
 \int_M u \, d V_g=0,
 \]
 the following inequality holds true
\begin{align*}
  \Big( \int_{M}  |\nabla^{j}_{g} u |^{\kappa_1} \,dV_g    \Big)^{\frac{1}{\kappa_1}}  \leq C   \Big(\int_{M}  |\nabla^{m}u |^{\kappa_3} \, dV_g     \Big)^{\frac{\Upsilon}{\kappa_3}} \Big( \int_{M}  |u |^{\kappa_2} \, dV_g  \Big)^{\frac{1-\Upsilon }{\kappa_2}} .
\end{align*}
\end{theorem}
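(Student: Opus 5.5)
The plan is to reduce to the Euclidean Gagliardo--Nirenberg inequality of \cite{Nirenber1959} by localizing in charts. The zero-mean condition is then used to absorb all lower order terms into $\Vert\nabla^m u\Vert_{\kappa_3}$, so the argument has two stages.

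\emph{Localization.} Since $M$ is compact, fix a finite atlas $\{(U_k,\phi_k)\}_{k=1}^N$ with charts whose metric coefficients are uniformly comparable to the Euclidean ones, together with all their derivatives up to order $m$. Let $\{\eta_k\}$ be a subordinate smooth partition of unity. For each $k$, the function $(\eta_k u)\circ\phi_k^{-1}$ is compactly supported in $\mathbb{R}^d$, so the Euclidean inequality applies to it with the same exponents $j,m,\kappa_1,\kappa_2,\kappa_3,\Upsilon$.

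Returning to $M$, I will compare covariant and coordinate derivatives: the Christoffel symbols are bounded on each chart, so $|\nabla^j v|_g$ and the coordinate derivatives of order $\le j$ control each other. By the Leibniz rule, $D^m(\eta_k u)$ is bounded by $C\sum_{i=0}^m|\nabla^i u|$. Summing over $k$ gives
\[
\Vert\nabla^j u\Vert_{\kappa_1}\le C\Big(\sum_{i=0}^m\Vert\nabla^i u\Vert_{\kappa_3}\Big)^{\Upsilon}\Vert u\Vert_{\kappa_2}^{1-\Upsilon}.
\]
Here I use $\Vert\eta_k u\Vert_{\kappa_2}\le\Vert u\Vert_{\kappa_2}$. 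On the left side, the lower order terms produced when $\nabla^j$ falls on $\eta_k$ are handled by summing the $\eta_k u$ pieces, which is legitimate since $\nabla^j u=\sum_k\nabla^j(\eta_k u)$.

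\emph{Absorbing lower order terms.} It then suffices to prove a Poincar\'e-type estimate: for $u$ with $\int_M u\,dV_g=0$,
\[
\sum_{i=0}^{m}\Vert\nabla^i u\Vert_{\kappa_3}\le C\Vert\nabla^m u\Vert_{\kappa_3}.
\]
I will argue by contradiction using the Rellich--Kondrachov compactness of $W^{m,\kappa_3}_g(M)\hookrightarrow W^{m-1,\kappa_3}_g(M)$. Suppose there are zero-mean $u_n$ with $\sum_i\Vert\nabla^i u_n\Vert_{\kappa_3}=1$ and $\Vert\nabla^m u_n\Vert_{\kappa_3}\to0$. Then a subsequence converges in $W^{m-1,\kappa_3}$ to some $u$. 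Since $\nabla^m u_n\to0$, the sequence is Cauchy in $W^{m,\kappa_3}$, so the limit satisfies $\nabla^m u=0$, $\int_M u=0$ and has norm $1$.

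\emph{Key rigidity step.} I must show that $\nabla^m u=0$ forces $u$ to be constant. I proceed by downward induction: if $\nabla^k u$ is parallel with $k\ge1$, integrating by parts gives
\[
\int_M|\nabla^k u|^2\,dV_g=-\int_M\langle\nabla^{k-1}u,\operatorname{div}\nabla^k u\rangle\,dV_g=0,
\]
since the divergence of a parallel tensor vanishes. Hence $\nabla^k u=0$, i.e. $\nabla^{k-1}u$ is parallel. Repeating down to $k=1$ gives $\nabla u=0$, so $u$ is constant. A constant with zero mean is $0$, which contradicts the normalization.

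\emph{Expected difficulty.} I expect the main care to be needed in the endpoint exponents. When $\kappa_3=1$ or $\kappa_3=\infty$, the weak limit and the Rellich step need the $BV$ or Arzel\`a--Ascoli variants, and the Euclidean Gagliardo--Nirenberg inequality itself has the known exception at $\Upsilon=1$ with $m-j-d/\kappa_3$ a nonnegative integer and $1<\kappa_3<\infty$. I will restrict to the cases covered by \cite{Nirenber1959}, which include all those used in Theorem \ref{Theorem:Gagliardo-Nirenberg}, where $\kappa_i<\infty$.
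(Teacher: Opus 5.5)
The paper does not prove this statement. It quotes it from \cite[Theorem 3.70]{AubinBook} and uses it only to derive Theorem~\ref{Theorem:Gagliardo-Nirenberg}, so there is no internal argument to compare against. Your proof is correct and self-contained. It applies the Euclidean Gagliardo--Nirenberg inequality to the chart pieces $\eta_k u$, then uses the compactness Poincar\'e estimate $\sum_{i\le m}\|\nabla^i u\|_{\kappa_3}\le C\|\nabla^m u\|_{\kappa_3}$ for zero-mean $u$, with the rigidity step $\nabla^m u=0\Rightarrow u$ constant.

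Your logical order is the reverse of the paper's. Your first stage already gives the inhomogeneous inequality for every $u$. Your Poincar\'e estimate applied to $u-\bar u$ gives $\sum_{i\le m}\|\nabla^i u\|_{\kappa_3}\le C(\|\nabla^m u\|_{\kappa_3}+\|u\|_{\kappa_3})$. Together these yield Theorem~\ref{Theorem:Gagliardo-Nirenberg} directly, and for $\kappa_3=2$ they also give Lemma~\ref{Lemma:EstimatesDerivatives}. You do not need the mean decomposition and Jensen step the paper uses. The citation buys the paper brevity; your route makes the appendix self-contained, modulo the Euclidean inequality and Rellich--Kondrachov.

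\emph{The restriction.} Your final restriction is the right one, but state it as $\kappa_1<\infty$ rather than ``the cases covered by Nirenberg''. At $\Upsilon=1$ the exponent relation reads $1/\kappa_1=-(m-j-d/\kappa_3)/d$, so Nirenberg's exceptional case can only arise with $\kappa_1=\infty$. Moreover, the literal Nirenberg list contains a false case with $\kappa_1=\infty$: $j=0$, $\kappa_2=\infty$, $\kappa_3=d/m$, $0<\Upsilon<1$. That case reduces to $\|u\|_\infty\le C\|\nabla^m u\|_{d/m}$, which fails. So the statement as written is false for some $\kappa_1=\infty$, whereas everything Theorem~\ref{Theorem:Gagliardo-Nirenberg} needs has $\kappa_1<\infty$.

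\emph{Details to tighten.}
\begin{enumerate}
\item Converting $|\nabla^j(\eta_k u)|_g$ to coordinates produces Christoffel terms $D^i v_k$, $1\le i<j$, in $L^{\kappa_1}$. The Euclidean inequality does not bound these directly. Use the Poincar\'e inequality for functions supported in a fixed ball: $\|D^i v_k\|_{\kappa_1}\le C\|D^j v_k\|_{\kappa_1}$.
\item The rigidity step needs $M$ connected; otherwise a nonzero locally constant zero-mean function violates the statement. Also, your limit lies only in $W^{m,\kappa_3}$, so note that it is smooth before integrating by parts. For instance, $\Delta_g^k u$ with $2k\ge m$ is a full contraction of $\nabla^{2k}u=0$, so elliptic regularity applies.
\item The case $\kappa_3=1$ needs no $BV$. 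The embedding $W^{1,1}\hookrightarrow L^1$ is still compact, and your Cauchy argument closes in $W^{m,1}$ itself.
\end{enumerate}
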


We now show that the inequality in Theorem \ref{Theorem:Gagliardo-Nirenberg} is valid for any function $u\in H_g^m(M).$ To see this, first define
\[
V:=\left\{u\in H_g^m(M) \;:\; \int_M u \;dV_g=0\right\}
\]
be the subspace of $H_g^m(M)$ consisting of functions with zero mean and define
\[
W:=\text{span}(\{1\})
\]
be the subspace of $H_g^m(M)$ generated by the constant functions. Since $M$ is compact, we have the embeddings $H_g^m(M)\hookrightarrow L^2_g(M)\hookrightarrow L_g^1(M)$, and therefore, the space $V$ is well defined, and the function $T:H_g^m(M)\rightarrow\mathbb{R}$ given by $T(u)=\int_M u\; dV_g$ is linear and continuous. This implies that the space $V=T^{-1}(0)$ is closed in $H_g^m(M).$ Since $\dim W=1$, then also $W$ is closed in $H_g^m(M)$. We have the following decomposition of $H_g^m(M)$ in terms of these spaces.

\begin{lemma}
\[
H_g^m(M) = V\oplus W\qquad\quad \text{and}\qquad V^\perp= W.
\]
\end{lemma}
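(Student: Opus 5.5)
The plan is to write down the decomposition explicitly via the mean-value projection. For $u\in H_g^m(M)$ set
\[
\bar u:=\frac{1}{\mathrm{Vol}_g(M)}\int_M u\;dV_g,
\]
which is well defined because $H_g^m(M)\hookrightarrow L_g^1(M)$. The argument then proceeds in three steps.

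\textbf{Step 1: the sum.} Since $M$ is closed, constants belong to $C^\infty(M)\subset H_g^m(M)$, so the splitting $u=(u-\bar u)+\bar u$ is available. Here $\bar u\in W$, and $T(u-\bar u)=T(u)-\bar u\,\mathrm{Vol}_g(M)=0$, so $u-\bar u\in V$. Hence $H_g^m(M)=V+W$.

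\textbf{Step 2: the sum is direct.} If $c\in V\cap W$, then $c$ is a constant with $c\,\mathrm{Vol}_g(M)=\int_M c\;dV_g=0$, so $c=0$. Together with Step 1 this gives $H_g^m(M)=V\oplus W$.

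\textbf{Step 3: orthogonality.} Here I must fix which inner product is meant. I would use the standard inner product \eqref{Equation:UsualSobolevInnerProduct}, since the appendix works with $\Vert\cdot\Vert_{H_g^m(M)}$.

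For a constant $c$ one has $\nabla^j c=0$ for all $j\ge1$. Therefore, for every $v\in H_g^m(M)$,
\[
\langle v,c\rangle_{H_g^m(M)}=c\int_M v\;dV_g=c\,T(v).
\]
This identity gives both inclusions:
\begin{itemize}
\item If $v\in V$, then $\langle v,c\rangle_{H_g^m(M)}=0$ for every constant $c$, so $V\subset W^\perp$.
\item If $v\in W^\perp$, testing against $c=1$ gives $T(v)=0$, so $v\in V$. Hence $V=W^\perp$.
\end{itemize}
Since $W$ is finite-dimensional and therefore closed, $V^\perp=(W^\perp)^\perp=\overline W=W$.

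Note that the first term of the identity in Step 3 is the $j=0$ summand $\int_M vc$, so it holds for all $v\in H_g^m(M)$ directly, without needing a density argument.

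\textbf{Main obstacle.} The only delicate point is the choice of inner product. For $\langle\cdot,\cdot\rangle_{\textbf{a}}$ with nonconstant $a_0$, the orthogonal complement of $W$ consists of functions with $\int_M a_0v=0$, so it would not equal $V$. I would therefore state explicitly that the orthogonality is taken with respect to the standard $H_g^m$ inner product. The same argument also works for $\langle\cdot,\cdot\rangle_{\textbf{a}}$ when $a_0$ is constant.
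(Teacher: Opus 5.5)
Your proof is correct and is essentially the paper's argument: it uses the same mean-value splitting $u=(u-\bar u)+\bar u$ and the same identity $\langle v,c\rangle_{H_g^m(M)}=c\int_M v\,dV_g$ for constants $c$, taken with the standard $H_g^m$ inner product exactly as in the paper. The only difference is the final step: you prove $W^\perp=V$ and take double complements, whereas the paper shows $W\subset V^\perp$ and rules out a larger $V^\perp$ by contradiction, writing $u=v+c$ and deriving $\Vert v\Vert_{H_g^m(M)}^2=0$.
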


\begin{proof}
For any $u\in H_g^m(M)$, we can write
\begin{equation}\label{Equation:OrtogonalDecomposition}
u = (u-\bar{u}) + \bar{u}, \quad \bar{u}=\frac{1}{\text{Vol}_g(M)}\int_M u\; dV_g
\end{equation}
where $v=u-\bar{u}\in V$ and $\bar{u}\in W$. Clearly $V\cap W=\{0\}$ and, hence, $H_g^m(M)=V\oplus W$. Next, we prove that $V^{\perp}=W.$ Indeed, given a constant function $u\equiv c\in W$, we have that $\vert\nabla^jc\vert_g=0$ for each $j=1,\ldots,m$ and, therefore, for any $v\in V$
\[
\langle u, v\rangle_{H_g^1(M)} = \int_M cv \; dV_g +   \sum_{j=1}^m \int_M\langle \nabla^jc,\nabla^j v\rangle_g\;dV_g = c\int_M v\; dV_g = 0,
\]
where we infer that $W\subset V^\perp$. Now, as $V$ is closed in the Hilbert space $H_g^m(M)$, we have that $H_g^m(M) =V\oplus V^\perp$; hence 
\[
H_g^m(M) = V\oplus W\subset V\oplus V^{\perp}= H_g^{m}(M),
\]
which proves that $V\oplus W = V\oplus V^\perp$. From this, we obtain that $W=V^\perp$: indeed, if $W\subsetneq V^\perp$, then there exists $u\in V^\perp$ such that $u\notin W$. As $H_g^m(M)=V\oplus M$, there exists a unique $v\in V$ and a unique constant function $w\equiv c\in W$ such that $u=v+c$. Since $u\notin W$, we have that $v\neq0$, and as $W\subset V^{\perp}$,
\[
0=\langle u,v\rangle_{H_g^m(M)} = \langle v+c,v\rangle_{H_g^m(M)} = \Vert v\Vert^2_{H_g^m(M)} + \langle c,v\rangle_{H_g^m(M)} = \Vert v\Vert^2_{H_g^m(M)}\neq 0
\]
which is a contradiction and $W=V^\perp$, as we wanted to prove.
\end{proof}

We use this decomposition to prove Theorem \ref{Theorem:Gagliardo-Nirenberg}.

\begin{proof}[Proof of Theorem \ref{Theorem:Gagliardo-Nirenberg}]
Let $u\in C^\infty(M)\subset H_g^m(M)$ and consider the decomposition $u=v+w$ with $v=u-\bar{u}\in V$ and $w=\bar{u}$ a constant function given in \eqref{Equation:OrtogonalDecomposition}. 
Given any $s\geq 1$, since the function $\xi(t)=\vert t\vert^s$ is convex and $(M,g)$ is compact, by Jensen's inequality we have for any $u\in L^s(M)$ that
\begin{equation}\label{Inequality:Jensen}
\left\vert \frac{1}{\text{Vol}_g(M)} \int_{M} u \; dV_g \right\vert^s = \xi\left(\frac{1}{\text{Vol}_g(M)}\int_M u \;dV_g\right)\leq \frac{1}{\text{Vol}_g(M)}\int_M \xi(u)\; dV_g = \frac{1}{\text{Vol}_g(M)}\int_M \vert u\vert^s\; dV_g. 
\end{equation}

Moreover, for $1\leq j\leq m$, it follows that $\vert \nabla^j u\vert_g = \vert\nabla^j (v+w)\vert_g = \vert \nabla^j v\vert_g$. Hence, for any $1\leq\kappa_1,\kappa_2,\kappa_3<\infty$ and $\Upsilon\in[0,1]$ satisfying the hypotheses in Theorem  \ref{Theorem:Gagliardo-Nirenberg}, the hypotheses in Theorem \ref{Theorem:Gagliardo-NirenbergZeroMean} also hold for any $v\in V$ and by  \eqref{Inequality:Jensen} with $s=\kappa_2$, we obtain that
\begin{align*}
 &\Big( \int_{M}  |\nabla^{j}_{g} u |^{\kappa_1} \,dV_g \Big)^{\frac{1}{\kappa_1}}  
 = \Big( \int_{M}  |\nabla^{j}_{g} v |^{\kappa_1} \,dV_g \Big)^{\frac{1}{\kappa_1}}  \\
 &\leq C   \Big(\int_{M}  |\nabla^{m}v |^{\kappa_3} \, dV_g     \Big)^{\frac{\Upsilon}{\kappa_3}} \Big( \int_{M}  |v |^{\kappa_2} \, dV_g  \Big)^{\frac{1-\Upsilon }{\kappa_2}} \\
 &\leq C   \left[ \left(\int_{M}  |\nabla^{m}v |^{\kappa_3} \, dV_g\right)^{1/\kappa_3} +  \left(\int_{M}  \vert u\vert^{\kappa_3} \, dV_g\right)^{1/\kappa_3}    \right]^{\Upsilon} \Big( \int_{M}  |v |^{\kappa_2} \, dV_g  \Big)^{\frac{1-\Upsilon }{\kappa_2}}\\
 &=C   \left[ \left(\int_{M}  |\nabla^{m}u |^{\kappa_3} \, dV_g\right)^{1/\kappa_3} +  \left(\int_{M}  \vert u\vert^{\kappa_3} \, dV_g\right)^{1/\kappa_3}    \right]^{\Upsilon} \Big( \int_{M}  |u-\bar{u} |^{\kappa_2} \, dV_g  \Big)^{\frac{1-\Upsilon }{\kappa_2}}\\
 &\leq 2^{(1-\Upsilon)} C  \left[ \left(\int_{M}  |\nabla^{m}u |^{\kappa_3} \, dV_g\right)^{1/\kappa_3} +  \left(\int_{M}  \vert u\vert^{\kappa_3} \, dV_g\right)^{1/\kappa_3}    \right]^{\Upsilon} \Big( \int_{M}  \vert u\vert^{\kappa_2} + \vert \bar{u} \vert^{\kappa_2} \, dV_g  \Big)^{\frac{1-\Upsilon }{\kappa_2}}\\
 &= 2^{(1-\Upsilon)} C  \left[ \left(\int_{M}  |\nabla^{m}u |^{\kappa_3} \, dV_g\right)^{1/\kappa_3} +  \left(\int_{M}  \vert u\vert^{\kappa_3} \, dV_g\right)^{1/\kappa_3}    \right]^{\Upsilon} \Big( \int_{M}  \vert u\vert^{\kappa_2} + \left\vert \frac{1}{\text{Vol}_g(M)} \int_M u \; dV_g \right\vert^{\kappa_2} \, dV_g  \Big)^{\frac{1-\Upsilon }{\kappa_2}}\\
 &\leq 2^{(1-\Upsilon)} C  \left[ \left(\int_{M}  |\nabla^{m}u |^{\kappa_3} \, dV_g\right)^{1/\kappa_3} +  \left(\int_{M}  \vert u\vert^{\kappa_3} \, dV_g\right)^{1/\kappa_3}    \right]^{\Upsilon} \Big( \int_{M}  \vert u\vert^{\kappa_2} +  \left(\frac{1}{\text{Vol}_g(M)} \int_M \vert u\vert^{\kappa_2} \; dV_g\right)  \, dV_g  \Big)^{\frac{1-\Upsilon }{\kappa_2}}\\
 &= 2^{(1-\Upsilon)} C  \left[ \left(\int_{M}  |\nabla^{m}u |^{\kappa_3} \, dV_g\right)^{1/\kappa_3} +  \left(\int_{M}  \vert u\vert^{\kappa_3} \, dV_g\right)^{1/\kappa_3}    \right]^{\Upsilon} \left( 2\int_{M}  \vert u\vert^{\kappa_2} dV_g\right)^{\frac{1-\Upsilon }{\kappa_2}}\\
 &= 2^{(1-\Upsilon)(1+\frac{1}{\kappa_2})} C  \left[ \left(\int_{M}  |\nabla^{m}u |^{\kappa_3} \, dV_g\right)^{1/\kappa_3} +  \left(\int_{M}  \vert u\vert^{\kappa_3} \, dV_g\right)^{1/\kappa_3}    \right]^{\Upsilon} \left( \int_{M}  \vert u\vert^{\kappa_2} dV_g\right)^{\frac{1-\Upsilon }{\kappa_2}}
\end{align*}
and the constant $2^{(1-\Upsilon)(1+\frac{1}{\kappa_2})}C$ depends only on $d,m,j,\kappa_1,\kappa_2,\kappa_3,\Upsilon$ and $M$ because $C=C(d,m,j,\kappa_1,\kappa_3,\Upsilon,M)$. The inequality follows for any $u\in H_g^m(M)$ by a density argument.
\end{proof}

\subsection{Equivalent norms in $H_g^m(M)$ and proof of Proposition \ref{Propo:EquivalentNorm}} \label{Subsection:Appendix:ProofEquivalentNorms}

Let $m\geq 2$ be a positive integer and consider a closed Riemannian manifold $(M,g)$ of dimension $d> 2m$. There are two main ways to define the higher order Sobolev space $H_g^m(M)$. The first one, as the completion of $C^\infty(M)$ with respect to the norm \eqref{Equation:UsualSobolevNorm}.
The second norm, which is mainly used in the variational framework of problems involving higher order laplacians, is defined as the completion of $C^\infty(M)$ with respect to the norm 
\begin{equation}\label{Equation:SObolevNormHigherOrderLaplacian}
\Vert u\Vert_{\Delta}:= \left( \sum_{\substack{i=0\\ i\  even}}^m  \int_M  \vert \Delta_g^{i/2}  u\vert^2 \; dV_g
+ \sum_{\substack{i=0\\ i\  odd}}^m \int_M  \vert \nabla\Delta_g^{(i-1)/2}u\vert_g^2 \; dV_g\right)^{1/2},\quad u\in C^\infty(M).
\end{equation}
which is induced by the inner product
\begin{equation}\label{Equation:InnerProductHigherOrderLaplacians}
\langle u,v\rangle_{\Delta}:= 
\sum_{\substack{i=0\\ i\  even}}^m  \int_M \Delta_g^{i/2} u\Delta_g^{i/2} v \; dV_g
+ \sum_{\substack{i=0\\ i\  odd}}^m \int_M \langle \nabla\Delta_g^{(i-1)/2}u, \nabla\Delta_g^{(i-1)/2} v\rangle_g \; dV_g,\quad u,v\in \mathcal{C}^\infty(M),
\end{equation}
In this section, we prove that both norms are equivalent, and also that they are equivalent to the norm $\Vert\cdot\Vert_{\text{a}}$, given in \eqref{Equation:EquivalentNorm}, for any $\textbf{a}\in C_+^\infty(M)\times[0,\infty)^{m-1}\times(0,\infty)$.
Remark that the norm $\Vert\cdot\Vert_{\Delta}$ is just the norm $\Vert\cdot\Vert_{\textbf{a}}$ with $\textbf{a}=(1,\ldots,1)$. 

We can give two more norms that are interesting because they only use the higher order derivatives and the zero-th order term. The first one is just $\Vert\cdot\Vert_{\textbf{a}}$ with $\textbf{a}=(1,0,\ldots,0,1)$, which is given explicitly by
\[
\Vert u\Vert_{\ast}:=\Vert u\Vert_{(1,0,\ldots,0,1)}:=\begin{cases}
\left( \int_M (\Delta^{m/2} u)^2 + u^2 \;dV_g\right)^{1/2} & \text{ if }m \text{ is even, }\\
\left( \int_M \vert\nabla\Delta^{(m-1)/2} u\vert_g^2 + u^2 \;dV_g\right)^{1/2} & \text{ if }m \text{ is even. }
\end{cases}
\]
The other norm is
\[
\Vert u\Vert_{\ast\ast}:=\left(\int_M  \left[ \vert \nabla^m u\vert_g^2 + u^2\; dV_g   \right] \right)^{1/2}, \qquad u\in H_g^m(M)
\]
which is induced by the inner product
\[
\langle u,v \rangle_{\ast\ast}:=\int_M \left[\langle \nabla^m u,\nabla^m v \rangle_{g} + uv\right] \; dV_g
\]

The main result in this section is the following:

\begin{proposition}\label{Propo:EquivalentNorm} 
For any $\textbf{a}=(a_0,a_1,\ldots,a_{m-1},a_m)\in C_+^\infty(M)\times [0,\infty)^{m-1}\times(0,\infty)$, the norm $\Vert \cdot\Vert_{\textbf{a}}$ is equivalent to the norms $\Vert\cdot\Vert_{H_g^m(M)}$, $\Vert\cdot\Vert_{\Delta}$, $\Vert\cdot\Vert_\ast$, and $\Vert\cdot\Vert_{\ast\ast}$.
\end{proposition}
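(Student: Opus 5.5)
The plan is to prove everything by comparison with the standard norm $\Vert\cdot\Vert_{H_g^m(M)}$: show $\Vert u\Vert_{\textbf{a}}\leq C\Vert u\Vert_{H_g^m(M)}$ and $\Vert u\Vert_{H_g^m(M)}\leq C\Vert u\Vert_{\ast}$, $\Vert u\Vert_{H_g^m(M)}\leq C\Vert u\Vert_{\ast\ast}$, together with $\Vert u\Vert_\ast\leq C\Vert u\Vert_{\textbf{a}}$. Since $\Vert\cdot\Vert_\Delta$ and $\Vert\cdot\Vert_\ast$ are particular cases of $\Vert\cdot\Vert_{\textbf{a}}$, and $\Vert\cdot\Vert_{\ast\ast}\leq\Vert\cdot\Vert_{H_g^m(M)}$ trivially, this chain closes all equivalences. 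Everything is done for $u\in C^\infty(M)$ and extended by density.

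\textbf{Upper bounds.} Pointwise, $\vert\Delta_g^{k}u\vert\leq C_d\vert\nabla^{2k}u\vert_g$ (a trace of $\nabla^{2k}u$ up to lower-order curvature terms, which involve $\nabla^{l}u$ with $l<2k$ and bounded coefficients on the compact $M$). Similarly, $\vert\nabla\Delta_g^{k}u\vert_g\leq C\sum_{l\leq 2k+1}\vert\nabla^l u\vert_g$. Since $a_0\leq\max_M a_0$ and the $a_i$ are constants, $\Vert u\Vert_{\textbf{a}}^2\leq C\Vert u\Vert_{H_g^m(M)}^2$. For the lower bound $\Vert u\Vert_{\ast}\leq C\Vert u\Vert_{\textbf{a}}$, note that $a_m>0$, $a_0\geq\min_M a_0>0$, and all other terms in $\Vert\cdot\Vert_{\textbf{a}}^2$ are nonnegative.

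\textbf{Lower bound, step 1.} First control the intermediate derivatives by the extreme ones using Theorem \ref{Theorem:Gagliardo-Nirenberg} with $\kappa_1=\kappa_2=\kappa_3=2$, $0<j<m$ and $\Upsilon=j/m$. The exponent identity holds because $\frac12=\frac jd+(\frac12-\frac md)\frac jm+\frac{1-j/m}{2}$. Then Young's inequality gives
\[
\Vert\nabla^j u\Vert_2\leq C\big(\Vert\nabla^m u\Vert_2+\Vert u\Vert_2\big),
\]
so $\Vert u\Vert_{H_g^m(M)}\leq C\Vert u\Vert_{\ast\ast}$.

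\textbf{Lower bound, step 2.} It remains to bound $\Vert\nabla^m u\Vert_2$ by $\Vert u\Vert_\ast$; this is the main obstacle, since it is a genuine elliptic estimate. I would invoke G\aa rding's inequality for the elliptic operator $(-\Delta_g)^m$ of order $2m$ on closed $M$:
\[
\Vert u\Vert_{H_g^m(M)}^2\leq C\Big(\int_M u(-\Delta_g)^m u\,dV_g+\Vert u\Vert_2^2\Big).
\]
By \eqref{Equation:PowerLaplacian}, the integral equals $\int_M(\Delta_g^{m/2}u)^2$ for $m$ even, or $\int_M\vert\nabla\Delta_g^{(m-1)/2}u\vert_g^2$ for $m$ odd, so the right-hand side is exactly $C\Vert u\Vert_\ast^2$.

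\textbf{Alternative route.} If one wants to avoid citing G\aa rding, one can iterate Bochner-type commutations. Integrating by parts, $\int\vert\nabla^2 v\vert^2=\int(\Delta v)^2-\int \mathrm{Ric}(\nabla v,\nabla v)$, and higher orders similarly yield $\Vert\nabla^m u\Vert_2^2\leq\Vert u\Vert_\ast^2+C\sum_{l<m}\Vert\nabla^l u\Vert_2^2$. The lower-order terms are then absorbed using step 1 with a small constant, namely $\Vert\nabla^l u\Vert_2\leq\varepsilon\Vert\nabla^m u\Vert_2+C_\varepsilon\Vert u\Vert_2$ from Young's inequality with exponent $\Upsilon=l/m<1$.
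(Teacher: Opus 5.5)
Your proposal is correct and follows essentially the paper's route. Both use Gagliardo--Nirenberg to control the intermediate derivatives, which gives $\Vert\cdot\Vert_{H_g^m(M)}\leq C\Vert\cdot\Vert_{\ast\ast}$; G\aa rding's inequality for $\Vert\cdot\Vert_{H_g^m(M)}\leq C\Vert\cdot\Vert_\ast$; and the coefficient comparison $\Vert\cdot\Vert_\ast\leq C\Vert\cdot\Vert_{\textbf{a}}\leq C'\Vert\cdot\Vert_{H_g^m(M)}$.

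The differences are minor. You take $\Upsilon=j/m$ with all exponents equal to $2$, where the paper's Lemma \ref{Lemma:EstimatesDerivatives} takes $\Upsilon=1$, $\kappa_3<2$ plus H\"older. You also prove $\Vert\cdot\Vert_{\textbf{a}}\leq C\Vert\cdot\Vert_{H_g^m(M)}$ directly, by noting that $\Delta_g^k u$ and $\nabla\Delta_g^k u$ are contractions of $\nabla^{2k}u$ and $\nabla^{2k+1}u$ (in fact with no curvature terms at all, since $\nabla g=0$). The paper instead obtains that bound from the equivalence it cites in Lemma \ref{Lemma:EquivalenceUsualNorms}.
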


As an application of the Gagliardo-Nirenberg interpolation inequality (Theorem \ref{Theorem:Gagliardo-Nirenberg}), we begin by showing that the norm
$\Vert\cdot\Vert_{H_g^m(M)}$ is equivalent to $\Vert\cdot\Vert_{\ast\ast}$.

\begin{lemma}\label{Lemma:EstimatesDerivatives}
For $m,d\in\mathbb{N}$ such that $2m<d$, there exists a constant $C>0$ such that
\[
\left(\int_M \vert\nabla^j u\vert^2_g \; dV_g\right)^{1/2}\leq C \left( \int_M \left[\vert \nabla^m u\vert_g^2 + u^2\right]\; dV_g\right)^{1/2}
\]
for any $j=1,\ldots,m-1$ and any $u\in H_g^m(M)$. In particular, $\Vert\cdot\Vert_{\ast\ast}$ 
is equivalent to the norm $\Vert\cdot\Vert_{H_g^m(M)}$ in $H_g^m(M)$.
\end{lemma}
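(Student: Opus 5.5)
We apply Theorem \ref{Theorem:Gagliardo-Nirenberg} with $\kappa_1=\kappa_2=\kappa_3=2$ and $\Upsilon=j/m$, for each fixed $j\in\{1,\ldots,m-1\}$. First we check the admissibility condition. With these choices the right-hand side of the exponent identity is
\[
\frac{j}{d}+\left(\frac12-\frac{m}{d}\right)\frac{j}{m}+\frac{1-j/m}{2}=\frac{j}{d}+\frac{j}{2m}-\frac{j}{d}+\frac12-\frac{j}{2m}=\frac12=\frac1{\kappa_1},
\]
so the identity holds. Moreover $j/m\le\Upsilon\le 1$ trivially, and $0<j<m$, $2m<d$ are given. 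Theorem \ref{Theorem:Gagliardo-Nirenberg} then yields
\[
\|\nabla^j u\|_2\le C\big(\|\nabla^m u\|_2+\|u\|_2\big)^{j/m}\|u\|_2^{1-j/m}.
\]
Since $\|u\|_2\le \|\nabla^m u\|_2+\|u\|_2$, the right-hand side is bounded by $C(\|\nabla^m u\|_2+\|u\|_2)$. This in turn is at most $\sqrt2\,C\|u\|_{\ast\ast}$ by the elementary inequality $a+b\le\sqrt2(a^2+b^2)^{1/2}$. The estimate is obtained first for smooth $u$, or directly for $u\in H_g^m(M)$ as the theorem is stated, and extends by density if needed.

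For the equivalence of norms, one direction is immediate: $\|u\|_{\ast\ast}\le\|u\|_{H_g^m(M)}$, because $\|\cdot\|_{\ast\ast}^2$ consists of two of the nonnegative terms appearing in $\|\cdot\|_{H_g^m(M)}^2$. For the reverse direction we square the estimate above and sum over $j=1,\ldots,m-1$. Adding the terms $j=0$ and $j=m$, which are bounded by $\|u\|_{\ast\ast}^2$, gives
\[
\|u\|_{H_g^m(M)}^2\le\big(1+2(m-1)C^2\big)\|u\|_{\ast\ast}^2 .
\]

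There is no real obstacle in this argument. The only point needing care is verifying that the exponent choice $\Upsilon=j/m$ satisfies the hypotheses of Theorem \ref{Theorem:Gagliardo-Nirenberg}, which the computation above settles, together with absorbing the factor $\|u\|_2^{1-j/m}$ into the full norm.
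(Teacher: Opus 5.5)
Your proof is correct. It uses the same tool as the paper (Theorem \ref{Theorem:Gagliardo-Nirenberg}), but at the opposite endpoint of the admissible range of $\Upsilon$.

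The paper takes $\Upsilon=1$, $\kappa_1=2$ and $\frac{1}{\kappa_3}=\frac12+\frac{m-j}{d}$. With this choice the inequality reduces to a Sobolev-type estimate $\|\nabla^j u\|_2\le C_j\big(\|\nabla^m u\|_{\kappa_3}+\|u\|_{\kappa_3}\big)$. The paper must then check that $\kappa_3\in(1,2)$, which is where $2m<d$ enters. It also needs H\"older's inequality on the compact manifold to pass from $L^{\kappa_3}$ back to $L^2$.

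You instead take $\Upsilon=j/m$ with $\kappa_1=\kappa_2=\kappa_3=2$, which is pure $L^2$ interpolation, and then absorb the factor $\|u\|_2^{1-j/m}$. This avoids the H\"older step and the check on $\kappa_3$. It also gives the stronger multiplicative bound $\|\nabla^j u\|_2\le C\|u\|_{\ast\ast}^{j/m}\|u\|_2^{1-j/m}$, which is the form one wants for $\varepsilon$-absorption arguments.

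Your explicit constant $1+2(m-1)C^2$ for the norm equivalence fills in the step the paper calls straightforward. Make clear that the $C$ there is the maximum of the finitely many $j$-dependent constants, as in the paper, since the lemma asks for one constant valid for all $j$.
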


\begin{proof}
For any $j\in\{1,\ldots,m-1\}$, fix $\kappa_2=2,\Upsilon =1$ and let $\kappa_2\geq 1$ be arbitrary, and take $\kappa_3\in\mathbb{R}$ to be defined as
\[
\frac{1}{\kappa_3} := \frac{1}{2}-\frac{j}{d}+\frac{m}{d} 
\]
We now show that $\kappa_1,\kappa_2,\kappa_3,j,m$ and $\Upsilon$ satisfy the hypothesis in Theorem \ref{Theorem:Gagliardo-Nirenberg}. First, since $j\leq m-1$, we immediately have that
\[
\frac{j}{m}\leq\frac{m-1}{m}<\Upsilon =1,
\]
and, by our choice of the constants $\kappa_1,\kappa_3$ and $\Upsilon$, we have that
\[
\frac{1}{\kappa_1}=\frac{1}{2}= \frac{j}{d} - \frac{m}{d} + \frac{1}{\kappa_3} = \frac{j}{d}+ \left(\frac{1}{\kappa_3}-\frac{m}{d}\right)\Upsilon + \frac{(1-\Upsilon)}{\kappa_2}.
\]
We verify that $\kappa_3\in(1,2)$. Indeed,  since $2m<d$, we have that
\begin{align*}
    \frac{1}{k_3}=  \frac{1}{2}+ \frac{m}{d}- \frac{j}{d} < \frac{1}{2}+ \frac{m}{d} = \frac{1}{2} + \frac{2m}{2d}<  \frac{1}{2}+ \frac{1}{2}=1,
\end{align*}
implying that $1 <  k_3 $. Furthermore, as $j\leq m-1$, we have that
\begin{align*}
    \frac{1}{k_3}=  \frac{1}{2}+ \frac{m}{d}- \frac{j}{d} \geq \frac{1}{2}+ \frac{m}{d}-\frac{m-1}{d} = \frac{1}{2}+ \frac{1}{d} >\frac{1}{2}
\end{align*}
yielding that $k_3 < 2$. Hence, we can apply the Gagliardo-Nirenberg interpolation inequality in Theorem \ref{Theorem:Gagliardo-Nirenberg} together with the embedding $L_g^2(M)\hookrightarrow L_g^{\kappa_3}(M)$ to obtain a positive constant $C_j>0$ such that

\[
  \Big( \int_{M}  |\nabla^{j} u |_g^{2} \,dV_g    \Big)^{1/2}  \leq C_j \Big[  \Vert \vert \;\nabla^m u\vert_g \Vert_{\kappa_3} + \Vert u \Vert_{\kappa_3}    \Big]
  \leq C_j\text{Vol}_g(M)^{\frac{2-\kappa_3}{2\kappa_3}} \Big[  \Vert \vert \;\nabla^m u\vert_g \Vert_{2} + \Vert u \Vert_{2}    \Big],
\]
and therefore, for any $j=1,\ldots,m-1$,
\[
\int_M \vert \nabla^j u\vert_g^2 \; dV_g \leq C_j^2\text{Vol}_g(M)^{2\left(\frac{2-\kappa_3}{\kappa_3}\right)} \Big[  \Vert \vert \;\nabla^m u\vert_g \Vert_{2} + \Vert u \Vert_{2}    \Big]^2\leq D_j \int_M \left[ \vert \nabla^m u\vert_g^2 + u^2\right] \; dV_g
\]
where $D_j:=4 C_j^2\text{Vol}_g(M)^{2\left(\frac{2-\kappa_3}{\kappa_3}\right)}$.

Taking $C:=\max\{\sqrt{D_1},\ldots,\sqrt{D_{m-1}}\}$, we conclude the first part of the lemma. It is now straightforward to see that $\Vert\cdot\Vert_{\ast\ast}$ is a norm in $H_g^m(M)$ equivalent to $\Vert\cdot\Vert_{H_g^m(M)}.$
\end{proof}

Now, as a consequence of integration by parts, we state the equivalence between $\Vert\cdot\Vert_{H_g^m(M)}$ and $\Vert\cdot\Vert_{\Delta}$. The proof can be found in \cite{Salomonsen2001} and in \cite{SandovalVelazquez2026}, where a more general setting is considered.

\begin{lemma}\label{Lemma:EquivalenceUsualNorms}
The norms $\Vert\cdot\Vert_{H_g^m(M)}$ and $\Vert\cdot\Vert_\Delta$  are equivalent in $H_g^m(M).$
\end{lemma}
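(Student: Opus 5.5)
One inequality is immediate and the other is the real content. The plan is to prove $\Vert u\Vert_\Delta\le C\Vert u\Vert_{H_g^m(M)}$ directly, and to prove $\Vert u\Vert_{H_g^m(M)}\le C\Vert u\Vert_\Delta$ by an elliptic estimate. In both cases it suffices to work with $u\in C^\infty(M)$ and conclude by density, since both norms are defined as completions of $C^\infty(M)$.

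\textbf{The easy inequality.} Pointwise, $\Delta_g u$ is the trace of $\nabla^2 u$. Iterating, $\Delta_g^{k}u$ is a contraction of $\nabla^{2k}u$ plus lower-order terms: commuting covariant derivatives produces curvature tensors contracted against $\nabla^{\ell}u$ with $\ell<2k$. The same holds for $\nabla\Delta_g^{k}u$ relative to $\nabla^{2k+1}u$. Since $M$ is compact, the curvature and its derivatives are bounded. This gives pointwise bounds
\[
|\Delta_g^{i/2}u|_g+|\nabla\Delta_g^{(i-1)/2}u|_g\le C\sum_{\ell\le i}|\nabla^\ell u|_g .
\]
Integrating and summing over $i\le m$ yields $\Vert u\Vert_\Delta\le C\Vert u\Vert_{H_g^m(M)}$.

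\textbf{The reverse inequality.} By Lemma \ref{Lemma:EstimatesDerivatives}, it is enough to bound $\int_M|\nabla^m u|_g^2\,dV_g$ by $C\Vert u\Vert_\Delta^2$. I would argue by induction on $k\le m$, with the claim
\[
\Vert\nabla^k u\Vert_2^2\le C\sum_{i\le k}\left(\Vert\Delta_g^{i/2}u\Vert_2^2 \text{ or } \Vert\nabla\Delta_g^{(i-1)/2}u\Vert_2^2\right)+C\Vert u\Vert_{H_g^{k-1}}^2 .
\]
The base step is the Bochner identity $\int_M|\nabla^2u|^2=\int_M(\Delta_g u)^2-\int_M \mathrm{Ric}(\nabla u,\nabla u)$, which follows from integrating by parts twice and commuting derivatives (the Ricci identity). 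The general step integrates by parts $\int_M\langle\nabla^k u,\nabla^k u\rangle$. Each time two covariant indices are exchanged, a curvature term appears that involves at most $k-1$ derivatives on each factor. In this way $\int|\nabla^k u|^2$ is rewritten as $\int|\nabla^{k-2}\Delta_g u|^2$ plus lower-order terms. Applying the inductive hypothesis to $\Delta_g u$ then closes the step. Finally, the intermediate $H_g^{k-1}$ terms are absorbed: combining Lemma \ref{Lemma:EstimatesDerivatives} with Young's inequality gives $\Vert\nabla^j u\Vert_2^2\le\varepsilon\Vert\nabla^m u\Vert_2^2+C_\varepsilon\Vert u\Vert_2^2$ for $j<m$, and $\Vert u\Vert_2\le\Vert u\Vert_\Delta$.

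\textbf{Alternative via elliptic regularity.} The operator $L=\sum_{i=0}^m(-\Delta_g)^i$ is elliptic of order $2m$, and $\Vert u\Vert_\Delta^2=\int_M uLu\,dV_g$. G\aa rding's inequality gives $\int_M uLu\ge c\Vert u\Vert_{H^m}^2-C\Vert u\Vert_2^2$, and the $\Vert u\Vert_2^2$ term is itself controlled by $\Vert u\Vert_\Delta^2$. This route is shorter but relies on a deeper external result.

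\textbf{Main obstacle.} The delicate part is the bookkeeping of the commutator (curvature) terms in the induction. One must check that at each stage they involve strictly fewer than $m$ derivatives, so that the interpolation absorption above applies. If this becomes unwieldy, I would fall back on the G\aa rding argument.
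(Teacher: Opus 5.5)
Your proposal is correct. Its main route is the one the paper points to: the paper gives no argument of its own for this lemma, calling it a consequence of integration by parts and deferring to Salomonsen and Sandoval--Vel\'azquez. Your pointwise contraction bound plus the Bochner/commutator induction is that argument written out. For the easy direction you do not even need curvature terms, since $\nabla g=0$ makes $\Delta_g^k u$ an exact contraction of $\nabla^{2k}u$.

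One justification should be fixed. Lemma~\ref{Lemma:EstimatesDerivatives} is the $\Upsilon=1$ case of Theorem~\ref{Theorem:Gagliardo-Nirenberg}. It only gives $\Vert\nabla^j u\Vert_2\le C(\Vert\nabla^m u\Vert_2+\Vert u\Vert_2)$ with a fixed $C$, so Young's inequality cannot turn it into $\varepsilon\Vert\nabla^m u\Vert_2^2+C_\varepsilon\Vert u\Vert_2^2$. For that you would need Theorem~\ref{Theorem:Gagliardo-Nirenberg} with $\kappa_1=\kappa_2=\kappa_3=2$ and $\Upsilon=j/m<1$.

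The absorption is unnecessary anyway. $\Vert\cdot\Vert_\Delta$ contains a term of every order $0\le i\le m$, so strong induction on $k$ bounds the remainder $\Vert u\Vert_{H_g^{k-1}(M)}^2$ directly by the $\Delta$-terms of order at most $k-1$.

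Your G\aa rding alternative is also sound, and it already sits inside the paper. Lemma~\ref{Lemma:GardingInequality} with $\mathcal{P}=(-\Delta_g)^m$ gives $\Vert u\Vert_{H_g^m(M)}\le C\Vert u\Vert_\ast\le C\Vert u\Vert_\Delta$. This is not circular: that lemma uses Lemma~\ref{Lemma:EquivalenceUsualNorms} only for the easy direction, which you prove directly.
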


The last equivalence we will establish is between the norm $\Vert\cdot\Vert_{H_g^m(M)}$ and the norm $\Vert\cdot\Vert_{\ast}$. This is a direct consequence of a stronger result, known as G\aa rding's inequality.

\begin{lemma}[G\aa rding's inequality]\label{Lemma:GardingInequality} If $\mathcal{P}:C^\infty(M)\to C^\infty(M)$ is a strongly elliptic operator of order $2m$, then, there exists a constant $C>0$ such that
\[
\Vert u\Vert_{H_g^m(M)}^2 \leq C\left[ \int_M u\mathcal{P}u\; dV_g + \Vert u\Vert_2^2 \right],\qquad\forall\;u\in C^\infty(M).
\]
In particular, the norm  $\Vert\cdot\Vert_{H_g^m(M)}$ and $\Vert\cdot\Vert_{\ast}$ are equivalent in $H_g^m(M)$.
\end{lemma}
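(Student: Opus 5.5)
The statement to prove is Gårding's inequality together with its consequence, the equivalence of $\Vert\cdot\Vert_{H_g^m(M)}$ and $\Vert\cdot\Vert_\ast$. The general inequality is classical. I would not reprove it in full; I would sketch the standard argument and then spend the effort on the corollary.

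\textbf{Step 1: the general inequality.} Cover $M$ by finitely many charts with a subordinate partition of unity $\{\eta_k\}$. In each chart the principal symbol of $\mathcal{P}$ is $(-1)^m\sum_{|\alpha|=|\beta|=m}b_{\alpha\beta}(x)\xi^{\alpha+\beta}$, and it is uniformly positive.
\begin{itemize}
\item \emph{Frozen coefficients.} For $\eta_k u$ supported in a small ball, freeze the coefficients at the center. Plancherel then gives $\int |D^m(\eta_k u)|^2 \le C\,\mathrm{Re}\int \eta_k u\,\mathcal{P}_0(\eta_k u)$.
\item \emph{Small perturbation.} The error from varying coefficients is small times $\Vert \eta_k u\Vert_{H^m}^2$, provided the ball is small.
\item \emph{Lower-order terms.} After integrating by parts $m$ times, these are bounded by $\Vert u\Vert_{H^m}\Vert u\Vert_{H^{m-1}}$.
\item \emph{Commutators.} Those with $\eta_k$ are also of this lower-order type.
\end{itemize}
Summing over $k$ gives
\[
\Vert u\Vert_{H_g^m}^2\le C\Big[\int_M u\mathcal{P}u\,dV_g+\Vert u\Vert_{H^m}\Vert u\Vert_{H^{m-1}}\Big].
\]
Next, Lemma \ref{Lemma:EstimatesDerivatives} (in its interpolation form, via Theorem \ref{Theorem:Gagliardo-Nirenberg}) gives $\Vert u\Vert_{H^{m-1}}\le C\Vert u\Vert_{H^m}^{1-1/m}\Vert u\Vert_2^{1/m}$. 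Young's inequality then lets me absorb $\Vert u\Vert_{H^m}\Vert u\Vert_{H^{m-1}}\le \varepsilon\Vert u\Vert_{H^m}^2+C_\varepsilon\Vert u\Vert_2^2$. For this step I would simply cite the standard references (e.g. Aubin, or Taylor's PDE book) rather than redo the details.

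\textbf{Step 2: the corollary.} Apply Step 1 to $\mathcal{P}=(-\Delta_g)^m$, which is strongly elliptic with principal symbol $|\xi|_g^{2m}$. By \eqref{Equation:PowerLaplacian}, $\int_M u(-\Delta_g)^m u\,dV_g$ equals $\int(\Delta_g^{m/2}u)^2$ for $m$ even and $\int|\nabla\Delta_g^{(m-1)/2}u|_g^2$ for $m$ odd. Hence
\[
\Vert u\Vert_{H_g^m}^2\le C\Vert u\Vert_\ast^2 \qquad\text{for } u\in C^\infty(M).
\]
The reverse bound is pointwise. Write $\Delta_g^{m/2}u$ (resp. $\nabla\Delta_g^{(m-1)/2}u$) as contractions of $\nabla^{m}u$. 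Commuting covariant derivatives produces only curvature terms, which are contractions of lower $\nabla^j u$ with bounded smooth tensors on compact $M$. Their squares are therefore bounded by $C\sum_{j\le m}|\nabla^j u|_g^2$, and integrating gives $\Vert u\Vert_\ast\le C\Vert u\Vert_{H_g^m}$. Density of $C^\infty(M)$ then extends both inequalities to all of $H_g^m(M)$.

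\textbf{Main obstacle.} The delicate point is the localization in Step 1, namely controlling commutators and coefficient oscillation uniformly over the finite atlas. On a compact manifold this is routine, so I expect to defer it to the literature. The only genuinely paper-specific input is the interpolation absorption of the $H^{m-1}$ term, which Lemma \ref{Lemma:EstimatesDerivatives} and Theorem \ref{Theorem:Gagliardo-Nirenberg} provide.
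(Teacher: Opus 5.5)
Your proposal is correct. For the inequality itself you and the paper do the same thing in substance. The paper just cites Taylor (Section 5.11), and your localization sketch is the standard argument behind that citation. Your absorption step works, with one correction to the attribution. The bound $\Vert\nabla^j u\Vert_2\le C\Vert u\Vert_{H_g^m(M)}^{j/m}\Vert u\Vert_2^{1-j/m}$ comes from Theorem \ref{Theorem:Gagliardo-Nirenberg} with $\kappa_1=\kappa_2=\kappa_3=2$ and $\Upsilon=j/m$. It does not come from Lemma \ref{Lemma:EstimatesDerivatives} as stated, which uses $\Upsilon=1$ and is not an interpolation estimate. The bound $\Vert u\Vert_{H_g^m(M)}\le C\Vert u\Vert_\ast$ is obtained exactly as in the paper.

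The genuine difference is the reverse bound. The paper writes $\Vert u\Vert_\ast\le\Vert u\Vert_\Delta\le C'\Vert u\Vert_{H_g^m(M)}$, and so relies on Lemma \ref{Lemma:EquivalenceUsualNorms}, which it only quotes from the literature. You instead argue pointwise. Your route is more elementary and self-contained, and even simpler than you indicate:
\begin{itemize}
\item Since $\nabla g=0$ and covariant differentiation commutes with contractions, $\Delta_g^{m/2}u$ (resp. $\nabla\Delta_g^{(m-1)/2}u$) is exactly an iterated trace of $\nabla^m u$.
\item Hence no curvature terms arise at all.
\item Cauchy--Schwarz then gives $\vert\Delta_g^{m/2}u\vert\le d^{m/4}\vert\nabla^m u\vert_g$ (resp. $\vert\nabla\Delta_g^{(m-1)/2}u\vert_g\le d^{(m-1)/4}\vert\nabla^m u\vert_g$).
\end{itemize}
The paper's route has the advantage of reusing an equivalence it needs anyway in the proof of Proposition \ref{Propo:EquivalentNorm}. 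The cost is an external reference for what is really a one-line pointwise estimate.
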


\begin{proof}
For a proof of G\aa rding's inequality on Riemannian manifolds, see, for instance, \cite[Section 5.11]{TaylorBookI}. Now, as the $m$-th power of the Laplace-Beltrami operator, $\mathcal{P}:=(-\Delta_g)^m$, is strongly elliptic, we have that
\[
\Vert u\Vert_{H_g^m(M)}^2\leq C \int_{M} \left[u(-\Delta_g^m)u + u^2 \right] \; dV_g = C \Vert u\Vert_{\ast}^2,\qquad \forall\; u\in C^\infty(M).
\]
Since $C^\infty(M)$ is dense in $H_g^m(M)$ and since the norms $\Vert\cdot\Vert_{\textbf{a}}$ can be extended to $H_g^m(M)$, then $\Vert u\Vert_{H_g^m(M)}\leq \sqrt{C}\Vert u\Vert_{\ast}$ for every $u\in H_g^m(M)$. In addition, by Lemma \ref{Lemma:EquivalenceUsualNorms}, there exists a constant $C'>0$ such that
\[
\Vert u\Vert_{\ast}\leq \Vert u\Vert_\Delta\leq C' \Vert u\Vert_{H_g^m(M)},\qquad\forall\; u\in H_g^m(M),
\]
concluding the equivalence of the norms $\Vert u\Vert_{\ast}$ and $\Vert u\Vert_{H_g^m(M)}$.
\end{proof}

\begin{proof}[Proof of Proposition \ref{Propo:EquivalentNorm}]
By Lemmas \ref{Lemma:EstimatesDerivatives}, \ref{Lemma:EquivalenceUsualNorms} and \ref{Lemma:GardingInequality}, it suffices to show that $\Vert \cdot\Vert_\ast$ and $\Vert\cdot\Vert_{\textbf{a}}$ are equivalent in $H_g^m(M)$ for every $\textbf{a}=(a_0,a_1,\ldots,a_{m-1},a_m)\in C_+^\infty(M)\times [0,\infty)^{m-1}\times(0,\infty)$. Let 
\[
0<\underline{a}_0:=\min_{x\in M}a_0(x)\leq a_0(x)\leq \max_{x\in M}a_0(x)=:\overline{a}_0.
\]
Then, on the one hand, as $a_m>0$, we have for every $u\in C^\infty(M)$ that
\begin{align*}
\Vert u\Vert_\ast^2 &= \int_M \left[u(-\Delta_g^m)u + u^2 \right]\; dV_g 
\leq \max\{\frac{1}{a_m},1,\underline{a}_0\}\int_{M} u\left[ a_m(-\Delta_g)^m + a_0(-\Delta_g)^0 \right] u \; dV_g\\
&\leq \max\{\frac{1}{a_m},1,\underline{a}_0\}\int_{M} u\left[\sum_{j=0}^m a_j(-\Delta_g)^j\right] u \; dV_g
= \max\{\frac{1}{a_m},1,\underline{a}_0\} \Vert u\Vert_{\textbf{a}}^2.
\end{align*}
On the other hand, by Lemmas \ref{Lemma:EquivalenceUsualNorms} and \ref{Lemma:EstimatesDerivatives}, there exists constants $C,C'>0$ such that
\begin{align*}
\Vert u\Vert_{\textbf{a}}^2 &= \int_{M} u\left[\sum_{j=0}^m a_j(-\Delta_g)^j\right] u \; dV_g
\leq \max\{\overline{a}_0,a_1,\ldots,a_m\}\int_M u \left[\sum_{j=0}^m (-\Delta_g)^j\right] u \;dV_g\\
&= \max\{\overline{a}_0,a_1,\ldots,a_m\}\Vert u\Vert_\Delta^2
\leq C \max\{\overline{a}_0,a_1,\ldots,a_m\}\Vert u\Vert_{H_g^m(M)}^2\\
&\leq C' C \max\{\overline{a}_0,a_1,\ldots,a_m\}\Vert u\Vert_{\ast}^2,
\end{align*}
and the result follows by density of $C^\infty(M)$ in $H_g^m(M)$.
\end{proof}

\section*{Acknowledgments} 

 The authors would like to thank Monica Clapp  for recommending the reference \cite{SzulkinWeth2010}.

\bibliographystyle{siam}
\bibliography{References}

\begin{flushleft}

\textbf{Romulo D. Carlos}\\

Departamento de Matemáticas, Facultad de Ciencias\\
Universidad Nacional Autónoma de México\\
Circuito Exterior, Ciudad Universitaria\\
04510 Coyoacán, Ciudad de México, Mexico\\
    \texttt{mathdiaz.unb@gmail.com}

\bigskip

\textbf{Juan Carlos Fernández}\\

Departamento de Matemáticas, Facultad de Ciencias\\
Universidad Nacional Autónoma de México\\
Circuito Exterior, Ciudad Universitaria\\
04510 Coyoacán, Ciudad de México, Mexico\\
\texttt{jcfmor@ciencias.unam.mx}

 \bigskip   
    
\textbf{María de los Ángeles Sandoval-Romero}

Departamento de Matemáticas, Facultad de Ciencias\\
Universidad Nacional Autónoma de México\\
Circuito Exterior, Ciudad Universitaria\\
04510 Coyoacán, Ciudad de México, Mexico\\
\texttt{selegna@ciencias.unam.mx}

\end{flushleft}

\end{document}